\theoremstyle{plain}
\newtheorem{thm}{Theorem}[section]
\newtheorem{lem}[thm]{Lemma}
\newtheorem{prop}[thm]{Proposition}
\theoremstyle{definition}
\newtheorem*{rem}{Remark}
\newtheorem*{defi}{Definition}
\def\R{\mathbb{R}}
\def\N{\mathbb{N}}
\def\Z{\mathbb{Z}}
\def\Td{\mathbb{T}^d}
\def\div{\operatorname{div}}
\def\D{\mathcal{D}}
\def\adiv{\mathcal{R}}
\def\e{\varepsilon}
\numberwithin{equation}{section} 
\title[Convex integration for the transport equation]{Convex integration solutions to the transport equation with full dimensional concentration}
\date{\today}
\author{Stefano Modena}
\address{Mathematisches Institut, Universit\"at Leipzig, D-04109 Leipzig, Germany}
\email{stefano.modena@math.uni-leipzig.de}
\author{Gabriel Sattig}
\address{Mathematisches Institut, Universit\"at Leipzig, D-04109 Leipzig, Germany}
\email{gabriel.sattig@math.uni-leipzig.de}
\begin{document}

\begin{abstract}
We construct infinitely many incompressible Sobolev vector fields $u \in C_t W^{1,\tilde p}_x$ on the periodic domain $\mathbb{T}^d$ for which uniqueness of solutions to the transport equation fails in the class of densities $\rho \in C_t L^p_x$, provided $1/p + 1/\tilde p > 1 + 1/d$. The same result applies to the transport-diffusion equation, if, in addition, $p’<d$. 
\end{abstract}

\maketitle

\section{Introduction}

This paper deals with the problem of (non)uniqueness of solution to the Cauchy problem for the transport equation
\begin{equation}
\label{eq:transport-eqn}
\partial_t \rho + \nabla \rho \cdot u  =0, 
\end{equation}
on the $d$-dimensional flat torus $\Td := \R^d / \Z^d$, where $u: [0,T] \times \Td \to \R^d$ is a given (locally integrable) vector field  and $\rho: [0,T] \times \Td \to \R$ is the unknown density. We will always assume that $u$ is incompressible, i.e.
\begin{equation}
\label{eq:incompressibility}
\div u = 0,
\end{equation}
in the sense of distributions. Under this condition, \eqref{eq:transport-eqn} is formally equivalent to the continuity equation
\begin{equation}
\label{eq:continuity-eqn}
\partial_t \rho + \div_x (\rho u) = 0.
\end{equation}
We prove the following theorem. 

\begin{thm}
\label{thm:main-weak-form}
Let $ p\in[1,\infty), \tilde p \in [1, \infty) $, and assume that
\begin{equation}
\label{eq:exponents-main-condition}
\frac{1}{p} + \frac{1}{\tilde p} > 1 + \frac{1}{d}.
\end{equation}
Then there are infinitely many incompressible vector fields satisfying
\begin{equation}
\label{eq:thm-u}
u\in C_t L^{p'}_x \cap C_t W^{1, \tilde p}_x
\end{equation}
for which uniqueness of distributional solutions to the transport equation \eqref{eq:transport-eqn} fails in the class of densities
\[ \rho\in C_t L^p_x.\]
Moreover, if $p = 1$, it holds $u \in C ( [0,T] \times \Td )$.
\end{thm}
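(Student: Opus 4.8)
The plan is to implement a convex integration scheme for the continuity equation in the spirit of Modena–Székelyhidi, but with a Mikado-density building block that concentrates on sets of nearly full Hausdorff dimension, so that the integrability exponent $p$ can be pushed up to the threshold $1/p+1/\tilde p = 1+1/d$. Concretely, I would construct a sequence of smooth solutions $(\rho_q, u_q)$ to the continuity equation with error
\begin{equation*}
\partial_t \rho_q + \div_x(\rho_q u_q) = -\div_x R_q,
\end{equation*}
where $R_q$ is a vector-valued ``Reynolds stress'', together with inductive estimates: $\|R_q\|_{L^1}\to 0$, $\|\rho_{q+1}-\rho_q\|_{C_t L^p_x}\lesssim \delta_{q+1}^{1/2}$, $\|u_{q+1}-u_q\|_{C_t L^{p'}_x}\lesssim \delta_{q+1}^{1/2}$, and $\|u_{q+1}-u_q\|_{C_t W^{1,\tilde p}_x}\lesssim$ (a small, summable quantity), with a geometric sequence $\delta_q\to 0$ and a rapidly growing frequency parameter $\lambda_q$. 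The perturbations are of the form $\rho_{q+1}-\rho_q = \vartheta_{q+1} + \text{(correctors)}$ and $u_{q+1}-u_q = w_{q+1}+\text{(correctors)}$, where $\vartheta_{q+1}$, $w_{q+1}$ are superpositions of highly oscillatory, highly concentrated Mikado flows built from a single profile on $\T^d$: a function supported on a $\mu^{-1}$-neighbourhood of a periodic line (or a union of such), rescaled so that $\|\vartheta\|_{L^p}\sim 1$ while $\|w\|_{L^{p'}}\sim 1$ and $\|w\|_{W^{1,\tilde p}}\sim \mu^{1-d/\tilde p}\cdot(\text{frequency factor})$. The condition \eqref{eq:exponents-main-condition} is exactly what makes the three competing requirements — smallness of $\vartheta w$ minus its average in $L^1$ (which gives the new stress), smallness of the perturbations in $L^p\times L^{p'}$, and the $W^{1,\tilde p}$ bound on $w$ — simultaneously satisfiable as $\mu,\lambda\to\infty$.

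The steps, in order, would be: (1) set up the precise inductive proposition with the list of estimates and the parameter relations ($\lambda_{q+1}$ a large power of $\lambda_q$, $\mu_{q+1}$ an intermediate power, $\delta_q$ geometric); (2) construct the concentrated Mikado building blocks on $\T^d$ and record their $L^r$ and $W^{1,r}$ norms and their defining PDE identities ($\div$-free, and the key quadratic identity expressing the average of $\vartheta w$ as a prescribed constant vector); (3) given $(\rho_q,u_q,R_q)$, decompose $-R_q$ via a partition of unity into a finite sum of the elementary constant directions handled by the building blocks, multiply by suitable amplitude functions $a_i(t,x)$ with $\|a_i\|\sim \|R_q\|_{L^1}^{1/2}$-type bounds, and define $\vartheta_{q+1},w_{q+1}$ as the corresponding fast-oscillation superposition at frequency $\lambda_{q+1}$ and concentration $\mu_{q+1}$; (4) add incompressibility correctors for $u$ and a density corrector so that the new pair still solves the continuity equation exactly with a new stress $R_{q+1}$; (5) estimate $R_{q+1}$: split into a ``quadratic/oscillation'' term (handled by the identity for the average of $\vartheta w$ plus an antidivergence operator $\adiv$ gaining a factor $\lambda_{q+1}^{-1}$), a ``transport'' term, a ``corrector'' term, and a ``mollification'' term, and check each is $\ll \delta_{q+2}$ in $L^1$; (6) pass to the limit: $\rho_q\to\rho$ in $C_tL^p_x$, $u_q\to u$ in $C_tL^{p'}_x\cap C_tW^{1,\tilde p}_x$, $R_q\to 0$ in $L^1$, so $(\rho,u)$ solves \eqref{eq:transport-eqn}; then arrange the construction so that $\rho$ is not the solution selected by, e.g., starting from $\rho_0\equiv 0$, giving nonuniqueness, and note one gets infinitely many such $u$ by varying parameters. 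For the $p=1$ endpoint, track $C^0$ bounds on $u_q$ (available since the $W^{1,\tilde p}$-costly corrector is absent from the $C^0$ estimate) to conclude $u\in C([0,T]\times\T^d)$.

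The main obstacle — and the genuinely new point relative to earlier work — is step (2) together with the $R_{q+1}$ estimate in step (5): one must design the concentration so that the building block lives on a set of Hausdorff dimension arbitrarily close to $d$ (not just close to $1$ as for tubular Mikado fields), balancing $\mu$ against $\lambda$ so that the $W^{1,\tilde p}$ norm of $w_{q+1}$ stays summable while the oscillatory error still gains enough from the antidivergence. Making ``full dimensional concentration'' precise and compatible with the $\div$-free requirement and the quadratic averaging identity — essentially choosing the right geometry of the support and the right anisotropic rescaling — is where the condition $1/p+1/\tilde p>1+1/d$ is forced, and getting every error term below the next threshold $\delta_{q+2}$ simultaneously is the delicate computation. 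The remaining steps (correctors, antidivergence, passing to the limit, nonuniqueness via a nonzero density with zero initial data) are, by now, fairly standard adaptations of the Modena–Székelyhidi machinery.
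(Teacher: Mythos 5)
There is a genuine gap, and it is precisely where you say the novelty lies. Your building block is described as ``a function supported on a $\mu^{-1}$-neighbourhood of a periodic line,'' i.e.\ a concentrated \emph{tube}. But a stationary, divergence-free vector field of the form $W(x)=g(x)e_j$ must be constant along the direction $e_j$, so a tubular profile can only concentrate in the $d-1$ transverse directions. This yields $\|\varphi_\mu\|_{L^r}\sim\mu^{a-(d-1)/r}$ and the parameter competition lands you exactly on the Modena--Sz\'ekelyhidi threshold \eqref{eq:exponents-ms}, not on \eqref{eq:exponents-main-condition}. No amount of ``anisotropic rescaling'' of a tube evades this: the gap between $1/(d-1)$ and $1/d$ is structural, not a bookkeeping improvement. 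The ``Hausdorff dimension close to $d$'' picture is also backwards --- what changes in the paper is the $L^r$-scaling exponent ($d$ instead of $d-1$), coming from a profile compactly supported in \emph{all} $d$ directions; its support gets smaller, not larger.

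The paper's actual device, which your proposal never reaches, is a \emph{time-dependent} (``space-time'') Mikado: a bump $\varphi_\mu^j\bigl(\lambda(x-\omega t e_j)\bigr)$ that drifts along $e_j$ with a new ``phase speed'' parameter $\omega$, modulated by an additional very fast transverse oscillation $\psi^j(\nu x)$. This is compactly supported in all $d$ spatial directions but is \emph{not} a stationary solution of the continuity equation, nor divergence-free. The compensating mechanism is the quadratic density corrector $Q^j$ together with the exact identity $\partial_t Q^j+\div(\Theta^j W^j)=0$ (Proposition~\ref{prop:mikados}), which replaces the stationary cancellation $\div(\Theta W)=0$ used in the tubular setting. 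One then needs: (a) the bilinear antidivergence $\adiv_N$ of Section~\ref{subsec:adiv} with $N$ taken large to absorb the time-derivative error $R^{\mathrm{time},2}$ coming from $\partial_t\Theta^j$; (b) a divergence corrector $w_c$ built from the same $\adiv_N$ applied to $\div W^j\neq 0$; and (c) a four-parameter tuning $\lambda\ll\mu\ll\omega,\nu$ (Section~\ref{sec:proof}) instead of a two-parameter one. Your sketch mentions a constant-average quadratic identity, which the paper also uses ($\fint\varphi_\mu\tilde\varphi_\mu=1$), but this alone is the $d-1$ machinery; the new $\partial_t Q+\div(\Theta W)=0$ identity and the parameter $\omega$ are what buy the extra $1/(d(d-1))$, and their absence means the scheme as you describe it would stall at \eqref{eq:exponents-ms}.
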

Here and in the following we will use the notation $C_t L^p_x := C([0,T], L^p(\Td))$, and, similarly, $L^r_t L^p_x := L^r((0,T), L^p(\Td))$. 
\begin{rem}
As a matter of fact, one can strengthen condition \eqref{eq:thm-u} and produce vector fields which satisfy
\begin{equation*}
u \in C_t L^{p'}_x \cap \bigcap_{\substack{\tilde p \text{ such that} \\ \eqref{eq:exponents-main-condition} \text{ holds}}} C_t W^{1, \tilde p}_x
\end{equation*}
and, moreover, $\|u\|_{L^{p'}} \leq \e$, for any fixed $\e>0$. See Theorem \ref{thm:weak} below.  We mention also that Theorem \ref{thm:main-weak-form} can be extended to cover the case of the transport-diffusion equation and to produce more regular densities and fields, provided more restrictive conditions on the exponents $p, \tilde p$ are assumed. See Theorems \ref{thm:diffusion} and \ref{thm:higherReg} below for the precise statements. 
\end{rem}

\subsection{Background}

It is well known that, when $u$ is at least Lipschitz continuous (in the space variable), the solution to \eqref{eq:transport-eqn} is given by the implicit formula
\begin{equation}
\label{eq:solution-smooth-case}
\rho \left( t, X(t,x) \right) = \rho(0,x),
\end{equation}
where $X(t,x)$ is the flow solving the ODE
\begin{equation}
\label{eq:ode}
\begin{aligned}
\partial_t X(t,x) & = u \left( t, X(t,x) \right), \\
X(0,x) & = x.
\end{aligned}
\end{equation}

It is in general of great importance, both for theoretical interest and for the applications to many physical models, to study the well posedness of the Cauchy problem \eqref{eq:transport-eqn}, in the case the vector field $u$ is not smooth, i.e. less then Lipschitz continuous. 

There are several ways to state the well posedness problem in the weak setting. The one we propose here is one possibility. We refer to \cite{Modena2018} for a more comprehensive discussion. Fix an exponent $p \in [1, \infty]$ and denote by $p'$ its dual H\"older
\begin{equation*}
\frac{1}{p} + \frac{1}{p'} = 1.
\end{equation*}
We ask two questions.

\begin{enumerate}[(a)]
\item Do existence and uniqueness of distributional solutions to \eqref{eq:transport-eqn} hold in the class of densities 
\begin{equation}
\label{eq:class-density}
\rho \in L^\infty_t L^p_x
\end{equation}
for a given vector field
\begin{equation}
\label{eq:class-field}
u \in L^1_t L^{p'}_x \, ?
\end{equation}

\item Is the relation \eqref{eq:solution-smooth-case} still valid, in some weak sense? In other words, is there still a connection between the Lagrangian world \eqref{eq:ode} and the Eulerian one \eqref{eq:transport-eqn}? 
\end{enumerate}

Let us observe that the choice of the class \eqref{eq:class-density} is motivated by the fact that, for smooth solutions of \eqref{eq:transport-eqn}-\eqref{eq:incompressibility}, every $L^p$ norm is constant in time: it is thus reasonable to expect that,  for weak solutions, the $L^p$ norm, if not constant, remains, at least, uniformly bounded in time. Once the class for the density \eqref{eq:class-density} is fixed, the choice \eqref{eq:class-field} for the vector field is natural, because in this way the product $\rho u \in L^1 ( (0,T) \times \Td)$ and thus the transport equation \eqref{eq:transport-eqn}, in its equivalent form \eqref{eq:continuity-eqn}, can be considered in distributional sense.

We list now some answers to the questions (a), (b) above, which can be found in the literature. The first consideration is that the \emph{existence} of distributional solutions is a pretty easy task. Indeed, regularizing the vector field and the initial datum, one can use the classical theory for ODE and formula \eqref{eq:solution-smooth-case} to produce a sequence of approximate solutions, which turns out to be uniformly bounded in $L^\infty_t L^p_x$. From such sequence one can then extract a weakly converging subsequence, whose limit is a solution to \eqref{eq:transport-eqn}, because of the linearity of the equation.

%Question (a) is thus a question about \emph{uniqueness} of solutions, whereas question (b) concerns the relation between the world of ODE and the PDE's one. Let us now briefly recall some very well established results. 

Let us now discuss some \emph{uniqueness} results. In their groundbreaking paper  \cite{DiPerna:1989vo}, R.~DiPerna and P.L~Lions proved that, for every $p \in [1, \infty]$, uniqueness holds in the class of densities \eqref{eq:class-density} for a given vector field $u$ as in \eqref{eq:class-field}, provided, in addition,
\begin{equation}
\label{eq:diperna-lions}
u \in L^1_t  W^{1, p'}_x. 
\end{equation}
Moreover, the incompressibility assumption can be substituted by the weaker requirement $\div u \in L^\infty$ (see also \cite{SEIS20171837} for a further relaxation in the case of the continuity equation).  DiPerna and Lions' proof is based on a regularization argument. Denoting by $\rho^\e$,  $u^\e$ a standard mollification of $\rho$ and $u$, the equation for $\rho^\e$, $u^\e$ reads,
\begin{equation*}
\partial_t \rho^\e + \div(\rho^\e u^\e) = r^\e,
\end{equation*}
where $r^\e$ is the \emph{commutator} $r^\e = \div \left(\rho^\e u^\e - (\rho u)^\e \right)$, given by the fact that the mollification of the product is not equal, in general, to the product of the mollifications. After some manipulation, it can be shown that $r^\e$ has the form
\begin{equation*}
r^\e \approx \rho^\e \, \nabla u^\e,
\end{equation*}
i.e. it is the product of the density and the derivative of the vector field. Such expression suggests, in some sense, that the commutator converges to zero as $\e \to 0$ (and thus uniqueness of solutions holds), for a density $\rho \in L^\infty_t L^p_x$, provided $\nabla u \in L^1_t L^{p'}_x$, which is exactly DiPerna and Lions' condition \eqref{eq:diperna-lions}. In other words, the interplay between the integrability of the density and the integrability of the derivative of the vector field plays a crucial role: very roughly speaking, a Sobolev vector field is ``Lipschitz like'' on a very large set, and there is just a very small ``bad'' set, where $\nabla u$ can be very large. A density $\rho$ with integrability $L^p$ that ``matches'' the integrability $L^{p'}$ of $\nabla u$ \emph{does not see} the bad set of $u$, and this implies uniqueness.
 
A natural question is now whether it is possible to lower the regularity \eqref{eq:diperna-lions} of $u$ and still have uniqueness of solutions in $L^\infty_t L^p_x$. 

In the class of \emph{bounded} densities, (i.e. $p=\infty$ in our notation), L.~Ambrosio \cite{Ambrosio:2004cva} showed in 2004 that uniqueness holds if the vector field $u \in L^1((0,T), BV(\Td))$ and it has bounded divergence, whereas S. Bianchini and P. Bonicatto in \cite{Bianchini:2017vf} were able to prove uniqueness in the $BV$ framework for the more general class of \emph{nearly incompressible} vector fields. 

Concerning question (b) above, it is a general principle in the theory of the transport equation that, whenever existence and uniqueness for the PDE \eqref{eq:transport-eqn} holds in the class of \emph{bounded} densities, then existence and uniqueness holds also for the ODE \eqref{eq:ode}, in the sense of the \emph{regular Lagrangian flow} and, moreover, the bridge \eqref{eq:solution-smooth-case} between the Lagrangian world and the Eulerian one still holds true. We refer to \cite{Ambrosio2017} for a detailed discussion in this direction. 

From the analysis above, it follows that the uniqueness results present in the literature are based essentially on two assumptions on the vector field: on one side, a bound on the derivative $Du$ is needed (e.g. $u$ Sobolev or $BV$); on the other side, a condition on the divergence of $u$ is required (e.g. $\div u =0$, or $\div u \in L^\infty$, or $u$ nearly incompressible). 

The most part of the counterexamples to uniqueness that can be found in the literature are based on the absence of at least one of those two conditions. There are counterexamples to uniqueness with Sobolev vector field with unbounded divergence (e.g. in DiPerna and Lions' paper \cite{DiPerna:1989vo}), and there are counterexamples to uniqueness for incompressible vector fields, which do not possess one full derivative (e.g. $u \in W^{s,1}$ for every $s<1$, but $u \notin W^{1,1}$), see, for instance, \cite{DiPerna:1989vo}, \cite{Depauw:2003wl}.  All such counterexamples are based on the failure of uniqueness at a \emph{Lagrangian} level: one constructs a pathological vector field for which the ODE admits two different flows of solutions and then uses such flows to produce non-unique solutions to the PDE: once again, the connection \eqref{eq:solution-smooth-case} is crucial. 

\subsection{Non-uniqueness for Sobolev vector fields and our contribution}

The mentioned counterexamples, therefore, do not answer the question whether uniqueness holds in the class of densities \eqref{eq:class-density}, if
\begin{equation}
\label{eq:new-framework}
\text{ $u$ is incompressible, $u \in L^1_t W^{1, \tilde p}_x$, but $\tilde p < p'$.}
\end{equation}
In such framework there are two competing mechanisms. On one side, by DiPerna and Lions result, uniqueness holds, at least, in the class of bounded densities, and thus, by the observation made before, uniqueness at the Lagrangian level is satisfied (again in the sense of the regular Lagrangian flow): in other words, the vector field is very well behaved from the ODE point of view. On the other side, the integrability of $\rho$ and the of $Du$ do not ``match'' anymore and thus, referring the the heuristic introduced above, it could happen that an $L^p$ density ``sees the bad set'' of a $W^{1, \tilde p}$ vector field, so that purely Eulerian non-uniqueness phenomena could appear. 

The framework \eqref{eq:new-framework} was considered, for the first time, quite recently in \cite{Modena2018} and \cite{modena-szekelyhidi18}, where the analog of Theorem \ref{thm:main-weak-form} was proven, with assumption \eqref{eq:exponents-main-condition} substituted by the strongest assumption 
\begin{equation}
\label{eq:exponents-ms}
\frac{1}{p} + \frac{1}{\tilde p} > 1 + \frac{1}{d-1},
\end{equation}
using a convex integration approach and exploiting a \emph{concentration} mechanism, in the spirit of the \emph{intermittency} added to the convex integration schemes by T.~Buckmaster and V.~Vicol in \cite{Buckmaster:2017wf}. 

Our main result, namely Theorem \ref{thm:main-weak-form}, shows that such approach can be extended to produce examples of non-uniqueness for the transport equation with \emph{full dimensional concentration}, i.e. with $d$ instead of $d-1$ in \eqref{eq:exponents-ms}. Notice that the result in \cite{Modena2018, modena-szekelyhidi18} and our Theorem \ref{thm:main-weak-form}  in particular implies that the duality between Lagrangian and Eulerian world is completely destroyed, even for Sobolev and incompressible (thus, quite ``well behaved'' vector field): there are many distributional solutions, but only one among them is transported by the regular Lagrangian flow as in \eqref{eq:solution-smooth-case}.

It is still an open question whether uniqueness of weak solutions to \eqref{eq:transport-eqn} holds if the Sobolev integrability $\tilde p$ of the field, $Du \in L^1_t L^{\tilde p}_x$, lies in the range
\begin{equation}
\label{eq:intermediate-range}
1 < \frac{1}{p} + \frac{1}{\tilde p} \leq 1 + \frac{1}{d},
\end{equation}
and thus whether Theorem \ref{thm:main-weak-form} is or is not optimal. Let us nevertheless observe that, for $p=1$, Theorem \ref{thm:main-weak-form} provides existence of continuous vector fields
\begin{equation}
\label{eq:crippa-caravenna}
u \in C_t W^{1, \tilde p}_x
\end{equation}
for every $\tilde p < d$, for which uniqueness fails (in the class $\rho \in C_t L^1_x$). On the other side, in a recent result by L.~Caravenna and G.~Crippa \cite{Caravenna:2016kg, caravenna-crippa18} uniqueness (for $\rho \in L^1_{tx}$) is proven, provided \eqref{eq:crippa-caravenna} is satisfied for some $\tilde p > d$ (in particular $u$ is continuous) and $u$ satisfies the additional assumption of ``uniqueness of forward-backward characteristics''. We refer to \cite{Caravenna:2016kg, caravenna-crippa18} for the precise definition. Such result could suggest that, at least in the case $p=1$, Theorem \ref{thm:main-weak-form} (and in particular condition \eqref{eq:exponents-main-condition}) could be sharp. 

A last point  is worth mentioning. Contrary to other recent results  in convex integration (e.g. \cite{Buckmaster:2017wf, cheskidov19, titi18, luo18}) where \emph{concentration} or \emph{intermittency} have been used, in this paper we use a completely physical space based approach and we deliberately avoid any use of Fourier methods and Littlewood-Paley theory. This has, in our opinion, at least two advantages. First, the paper is completely self contained, in particular we do not use any abstract theorem on Fourier multipliers. Secondly, we think that a proof developed in the physical  space can provide a better understanding of the structure of the ``anomalous'' vector fields we are exhibiting and therefore could help in getting an insight on the relation, if any, between the (very well behaved) Lagrangian structure of the vector fields  and the non-Lagrangian solutions we construct.

\bigskip

We conclude this section observing that the proof of Theorem \ref{thm:main-weak-form} is an immediate consequence of the following more general theorem, whose proof is the main topic of the paper.

\begin{thm}[Solutions for Sobolev vector fields]
\label{thm:weak}
	Let $\varepsilon>0$, let $\bar{\rho}\in C^\infty([0,T]\times\Td)$ with zero mean value in the space variable and let $\bar{u}\in C^\infty([0,T]\times\Td,\R^d)$ be a divergence-free vector field. Set $E\coloneqq\left\lbrace t\in[0,T]:\partial_t \bar{\rho}+\operatorname{div}(\bar{\rho}\bar{u})=0 \right\rbrace$. Let $p\in[1,\infty)$ and define $q\in[1,\infty)$ such that
	\begin{equation}
	\frac{1}{p}+\frac{1}{q}=1+\frac{1}{d}.
	\label{eq:p-q-cond}
	\end{equation}
	Then there are functions $\rho:[0,T]\times\Td\to\R$ and $u:[0,T]\times\Td\to\R^d$ such that
	\begin{enumerate}[(i)]
	\item $\rho\in C\left([0,T],L^p(\Td)\right)$ and $u\in C\left([0,T],L^{p'}(\Td)\right) \cap \bigcap_{\tilde{p}<q}C\left([0,T],W^{1,\tilde{p}}(\Td)\right)$.
	\subitem If $ p=1 $ then $ u $ is also continuous: $u\in C\left([0,T]\times\Td\right) $;
	\item $(\rho,u)$ is a distributional solution of \eqref{eq:continuity-eqn}--\eqref{eq:incompressibility};
	\item $(\rho,u)(t) = (\bar{\rho},\bar{u})(t)$ for all $t\in E$;
	\item $\|\rho(t)-\bar{\rho}(t)\|_{L^p} < \varepsilon$ for all $t\in[0,T]$.
	\end{enumerate}
	Statement (iv) can be replaced by the similar
	\begin{enumerate}[(i')]
		\setcounter{enumi}{3}
		\item $\|u(t)-\bar{u}(t)\|_{L^{p'}} < \varepsilon$ for all $t\in[0,T]$.
	\end{enumerate}
\end{thm}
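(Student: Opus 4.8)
\medskip

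\noindent\textbf{Proof strategy.}
The plan is to prove Theorem~\ref{thm:weak} by a convex integration (Nash--Kuiper type) scheme for the continuity equation, in the spirit of \cite{Modena2018, modena-szekelyhidi18}, the genuinely new ingredient being that the perturbations concentrate near periodic arrays of \emph{points} (full codimension $d$) rather than of tubes. I would reformulate the problem through the \emph{continuity-defect equation}, constructing smooth triples $(\rho_n,u_n,R_n)$ on $[0,T]\times\Td$, with $\div u_n=0$, solving
\begin{equation*}
\partial_t\rho_n+\div(\rho_nu_n)=-\div R_n ,
\end{equation*}
starting from $(\rho_0,u_0)=(\bar\rho,\bar u)$ and $R_0:=\adiv\big(\partial_t\bar\rho+\div(\bar\rho\bar u)\big)$, where $\adiv$ is a fixed antidivergence on zero-mean fields (well defined, since $\partial_t\bar\rho+\div(\bar\rho\bar u)$ has zero spatial mean), so that $R_0(t)\equiv0$ exactly for $t\in E$. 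Everything then reduces to one \emph{iteration proposition}: given $(\rho_n,u_n,R_n)$ with $\|R_n\|_{C_tL^1_x}\le\delta_n$, a target amplitude $\delta_{n+1}>0$, a concentration exponent $\gamma_{n+1}$, and a large frequency $\lambda_{n+1}$, produce $(\rho_{n+1},u_{n+1},R_{n+1})$ solving the same system with $\div u_{n+1}=0$, with $\rho_{n+1}-\rho_n$ and $u_{n+1}-u_n$ vanishing (in $t$) wherever $R_n$ does (hence on $E$), with $\|R_{n+1}\|_{C_tL^1_x}\le\delta_{n+1}$, and with $\|\rho_{n+1}-\rho_n\|_{C_tL^p_x}$, $\|u_{n+1}-u_n\|_{C_tL^{p'}_x}$, $\|u_{n+1}-u_n\|_{C_tW^{1,\tilde p}_x}$ (for each $\tilde p<q$) as small as desired once $\lambda_{n+1}$ is large. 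Theorem~\ref{thm:weak} then follows by iterating with $\delta_n\downarrow0$ geometrically, $\lambda_n\uparrow\infty$ fast enough, and $\gamma_n\uparrow\infty$ slowly, and passing to the limit.

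\emph{The perturbation.} I would fix finitely many directions $\{\bar e_\xi\}\subset\R^d$ together with a geometric decomposition $r=\sum_\xi\Gamma_\xi(r)^2\bar e_\xi$, valid for $|r|\le1$, with $\Gamma_\xi$ smooth off the origin and $\Gamma_\xi(0)=0$. To each $\xi$ I attach a concentrated scalar density profile $\Theta_\xi$ and a concentrated divergence-free velocity profile $W_\xi$ on $\Td$, supported (disjointly, for distinct $\xi$) near the origin in a ball of radius $\sim\mu^{-1}$, where $\mu=\mu_{n+1}:=\lambda_{n+1}^{\gamma_{n+1}}$, arranged so that $W_\xi$ has zero mean, $\Theta_\xi W_\xi$ has mean $\bar e_\xi$ — which forces $\Theta_\xi$ and $W_\xi$ to be \emph{correlated}, since a divergence-free field concentrated near a point necessarily has zero mean — and, crucially, so that $\Theta_\xi W_\xi-\bar e_\xi=\div\Omega_\xi$ for an \emph{explicit} periodic matrix field $\Omega_\xi$ with controlled norms; this explicit antidivergence is what keeps the whole argument in physical space, avoiding singular integral operators. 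I normalise so that $\|\Theta_\xi\|_{L^p}\|W_\xi\|_{L^{p'}}\sim1$ (Hölder essentially saturated, by the concentration), with the individual sizes split between the two factors via a free parameter, and $\|\nabla W_\xi\|_{L^{\tilde p}}\sim\mu\,\|W_\xi\|_{L^{\tilde p}}$. Rescaling at frequency $\lambda=\lambda_{n+1}$ spreads the profiles over $\lambda^{-1}\Z^d$, onto a set of relative measure $\sim\mu^{-d}$, and one sets, schematically,
\begin{equation*}
\rho_{n+1}=\rho_n+\sum_\xi a_\xi\,\Theta_\xi(\lambda x),\qquad u_{n+1}=u_n+\sum_\xi b_\xi\,W_\xi(\lambda x)+(\text{divergence-free corrector}),
\end{equation*}
where the corrector — from writing $W_\xi$ as a divergence of a skew-symmetric potential — restores $\div u_{n+1}=0$ with a gain $\lambda^{-1}$, and $a_\xi,b_\xi$ are proportional to $\chi_{n+1}\,|R_n|^{1/2}\,\Gamma_\xi(R_n/|R_n|)$ (up to the free splitting factor), with $\chi_{n+1}$ a smooth time cutoff equal to $1$ where $|R_n(t)|\ge\delta_{n+1}$ and vanishing where $|R_n(t)|$ is a fixed small multiple of $\delta_{n+1}$ — in particular near $E$, where no perturbation is added. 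Applying the decomposition to the \emph{direction} $R_n/|R_n|$ dispenses with any pointwise bound on $R_n$; on $\{\chi_{n+1}>0\}$ the argument of $\Gamma_\xi$ stays away from $0$, so $a_\xi,b_\xi$ are smooth there with controlled $t$-derivatives. By construction $\sum_\xi a_\xi b_\xi\,\bar e_\xi\approx\chi_{n+1}^2R_n$, so $\sum_\xi a_\xi b_\xi\,\Theta_\xi W_\xi(\lambda x)$ cancels $R_n$ up to an oscillatory remainder, while where $\chi_{n+1}\ne1$ the uncancelled part of $R_n$ is already $\le\delta_{n+1}$.

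\emph{Errors and the parameter balance.} Inserting the perturbation into the equation and applying $\adiv$ to whatever is not cancelled algebraically, $R_{n+1}$ is a sum of: the time-derivative error $\adiv\partial_t(\rho_{n+1}-\rho_n)$, negligible because $\adiv$ gains $\lambda^{-1}$ and $\lambda_{n+1}$ is taken large relative to the (finite, $n$-dependent) $C^1_t$-size of $R_n$ and to $\mu_{n+1}$; the linear errors $\rho_n(u_{n+1}-u_n)$ and $(\rho_{n+1}-\rho_n)u_n$, with $C_tL^1_x$-norms $\lesssim\|\rho_n\|_{C_tL^p_x}\|u_{n+1}-u_n\|_{C_tL^{p'}_x}+\|\rho_{n+1}-\rho_n\|_{C_tL^p_x}\|u_n\|_{C_tL^{p'}_x}$, small because the perturbations are small while $\|\rho_n\|_{C_tL^p_x}$, $\|u_n\|_{C_tL^{p'}_x}$ stay uniformly bounded along the iteration; the oscillation error, which by $\Theta_\xi W_\xi-\bar e_\xi=\div\Omega_\xi$ — absorbing the commutators with the slowly varying amplitudes into $\adiv$ — is $\sim\sum_\xi a_\xi b_\xi\,\lambda^{-1}\Omega_\xi(\lambda x)$, of $C_tL^1_x$-size $\lesssim\lambda^{-1}\delta_n$; and the corrector contributions, likewise $O(\lambda^{-1})$. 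Hence $\|R_{n+1}\|_{C_tL^1_x}\le\delta_{n+1}$ once $\lambda_{n+1}$ is large. For the perturbation sizes, the saturated Hölder relation gives $\|\rho_{n+1}-\rho_n\|_{C_tL^p_x}\,\|u_{n+1}-u_n\|_{C_tL^{p'}_x}\sim\delta_n$, so a balanced choice of the free parameter makes each factor $\sim\delta_n^{1/2}$, while the gradient carries an extra cost which, on writing $\mu_{n+1}=\lambda_{n+1}^{\gamma_{n+1}}$, comes out as $\|\nabla(u_{n+1}-u_n)\|_{C_tL^{\tilde p}_x}\sim\delta_n^{1/2}\,\lambda_{n+1}^{\theta}$ with $\theta=1+\gamma_{n+1}\big(1-d(1/p+1/\tilde p-1)\big)$. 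Since $1/p+1/q=1+1/d$, for $\tilde p<q$ we have $d(1/p+1/\tilde p-1)>1$, so $\theta<0$ once $\gamma_{n+1}$ is large enough; taking $\gamma_n\uparrow\infty$, for each fixed $\tilde p<q$ the tail $\sum_{n\ge N(\tilde p)}\|\nabla(u_{n+1}-u_n)\|_{C_tL^{\tilde p}_x}$ is summable — which is why equality in \eqref{eq:exponents-main-condition} (that is, $\tilde p=q$) is excluded, and the improvement from $d-1$ in \eqref{eq:exponents-ms} to $d$ here is exactly the gain from codimension-one to full codimension-$d$ concentration.

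\emph{Limit, and the main obstacle.} Iterating as above, the telescoping series for $\rho_n$ and $u_n$ converge, uniformly in $t$, in $C_tL^p_x$, $C_tL^{p'}_x$, and in $C_tW^{1,\tilde p}_x$ for every $\tilde p<q$; as the iterates are smooth, the limits $(\rho,u)$ lie in those spaces, giving (i). Moreover $\|R_n\|_{C_tL^1_x}\to0$, so $\rho_nu_n\to\rho u$ in $C_tL^1_x$ and $(\rho,u)$ solves \eqref{eq:continuity-eqn}--\eqref{eq:incompressibility} distributionally, giving (ii); the perturbations vanish where $R_n$ does, so $(\rho,u)=(\bar\rho,\bar u)$ on $E$, giving (iii); steering the free splitting parameters so the density perturbations acquire extra smallness yields $\|\rho(t)-\bar\rho(t)\|_{L^p}<\e$ for all $t$, giving (iv), and moving that slack onto the velocity perturbations instead gives (iv'). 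If $p=1$ then $p'=\infty$: the velocity perturbations are then controlled in $C_tL^\infty_x=C([0,T]\times\Td)$ and, being smooth with summable supremum norms, sum to a continuous function, so $u\in C([0,T]\times\Td)$. The hard part, and the truly new point with respect to \cite{Modena2018,modena-szekelyhidi18}, is the construction and estimation of the full-codimension building blocks: divergence-free velocity profiles concentrated near \emph{isolated points} rather than tubes, correlated with the density profiles so the product has a prescribed nonzero mean, and equipped with an explicit periodic potential making the oscillation error a small, explicit divergence; once these blocks and their sharp $L^p$, $L^{p'}$, $W^{1,\tilde p}$ bounds are in place, checking that the chain of parameter inequalities closes is routine, but it is precisely that check which forces $1/p+1/\tilde p>1+1/d$.
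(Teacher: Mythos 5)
Your overall architecture is the paper's: reformulate via the continuity--defect equation, prove an iteration proposition with the four estimates (with a free parameter $\eta$ trading between density and velocity smallness), keep the perturbation zero where $R_n$ vanishes, and sum up. The deduction of the theorem from the iteration is essentially identical, and your scaling heuristics for the $W^{1,\tilde p}$ gain, including the role of the exponent $\epsilon = d(1/p+1/\tilde p-1)-1>0$ and why $\tilde p=q$ is excluded, are correct.

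The gap is in the building blocks, and it is not a detail: \emph{time-independent} point-concentrated profiles cannot work, and your estimate on the oscillation error is false. Concretely, if $W_\xi$ is smooth, divergence-free, compactly supported near a point, \emph{and} $\div(\Theta_\xi W_\xi)=0$, then $\Theta_\xi W_\xi$ is itself the row-divergence of a compactly supported skew-symmetric matrix, hence $\int\Theta_\xi W_\xi = 0$ -- so a nonzero mean $\bar e_\xi$ forces $\div(\Theta_\xi W_\xi)\neq 0$. You accept this and propose the matrix potential $\Omega_\xi$ with $\div\Omega_\xi=\Theta_\xi W_\xi-\bar e_\xi$, but that does not repair the estimate. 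After rescaling, the defect must absorb the scalar
$\div\bigl(\textstyle\sum_\xi a_\xi b_\xi\,[\Theta_\xi W_\xi(\lambda\cdot)-\bar e_\xi]\bigr)
= \sum_\xi \nabla(a_\xi b_\xi)\cdot[\Theta_\xi W_\xi(\lambda\cdot)-\bar e_\xi]
 \;+\; \sum_\xi a_\xi b_\xi\,\lambda\,[\div(\Theta_\xi W_\xi)](\lambda\cdot).$
The first summand is genuinely $O(\lambda^{-1})$ under the improved antidivergence. But $\|\div(\Theta_\xi W_\xi)\|_{L^1}\sim\mu$ (one derivative of a fully concentrated profile), so the second summand has $L^1$ size $\sim\lambda\mu$, and any antidivergence exploiting the $\lambda$-oscillation gains at best a factor $\lambda^{-1}$, leaving an $L^1$ contribution $\sim\mu\to\infty$. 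The matrix $\Omega_\xi$ does not help: $\|\Omega_\xi\|_{L^1}\sim 1$, and the split $\lambda^{-1}\div\div(a\Omega_\lambda)-\lambda^{-1}\div(\Omega_\lambda\nabla a)$ just reproduces the same $O(1)$ obstruction in the first term, because for the transport equation the defect must be a \emph{vector}, not a matrix, so the double-divergence structure yields no extra gain. There is also no way to cancel this with a time-independent quadratic corrector, since $\partial_t$ of anything stationary vanishes.

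This is exactly what the paper identifies and fixes with the two devices you did not include. The profiles are made \emph{time-dependent} -- space-time Mikados of the form $\Theta^j=\varphi^j_\mu(\lambda(x-\omega t e_j))\,\psi^j(\nu x)$, $W^j=\tilde\varphi^j_\mu(\lambda(x-\omega t e_j))\,\psi^j(\nu x)\,e_j$, translating along $e_j$ at speed $\omega$ -- and a \emph{quadratic corrector} $q=\sum_j \chi_j^2 R_0^j\, Q^j$ is added to the density, where $Q^j=\frac{1}{\omega}(\varphi^j_\mu\tilde\varphi^j_\mu)(\lambda(x-\omega te_j))(\psi^j_\nu)^2$ is precisely engineered so that $\partial_t Q^j+\div(\Theta^j W^j)=0$. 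The translation produces $\partial_t Q^j\sim\lambda\omega\,(\cdot)$, large enough to cancel the $\lambda\mu$-sized spatial divergence, while $Q^j$ itself stays uniformly of size $\mu^b/\omega$ (the profile just moves, it does not grow), so $\|q\|_{L^p}$ is made small by taking $\omega$ large. That is the ``space-time'' gain enabling the jump from the $d-1$ of \eqref{eq:exponents-ms} to the $d$ of \eqref{eq:exponents-main-condition}. Your $W_\xi$ are also not divergence-free in the paper's construction (the paper uses a corrector $w_c$ built with the bilinear antidivergence $\adiv_N$), and a further $1/\nu$-periodic factor $\psi^j(\nu x)$ is inserted both to make $\Theta^j,W^j$ zero-mean and to make $\adiv_N$ effective. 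The geometric decomposition $r=\sum_\xi\Gamma_\xi(r)^2\bar e_\xi$ you propose is a harmless alternative to the paper's direct coordinate splitting $R_0=\sum_j R_0^j e_j$, but that choice is not where the theorem lives -- the space-time structure and the corrector $q$ are.
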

\noindent From this theorem, Theorem \ref{thm:main-weak-form}, i.e. the non-uniqueness of the transport equation, can be easily deduced.

\begin{proof}[Proof of Theorem \ref{thm:main-weak-form}, assuming Theorem \ref{thm:weak}]
Let $\bar \rho \in C^\infty(\Td)$ with zero mean value but not identically zero. Choose $ \chi:[0,T]\to[0,1] $ smooth such that $ \chi$ is equal to zero on $[0,T/3] $ and one on $[2T/3,T] $. Then the function $ (t,x)\mapsto\chi(t)\bar{\rho}(x) $ is smooth and has zero mean value in $ x $ at any time. We can apply Theorem~\ref{thm:weak} on $\chi\bar{\rho}$ and $\bar{u}\equiv0$ and obtain a solution of the transport equation $ (\rho,u) $ with the claimed regularity. As at times $ t\in [0,\tfrac{T}{3}]\cup[\tfrac{2T}{3},T] $ the transport equation is solved by $ (\chi\bar{\rho},\bar{u}) $ in the strong sense, in particular the initial and final values of $ \rho $ are maintained because of statement (iii) of the theorem. Therefore $\rho|_{t = 0} \equiv 0$ and $\rho|_{t = T} = \bar \rho \not \equiv 0$.
\end{proof}

\subsection{Some comments on the method used in the proof}

The proof of Theorem \ref{thm:weak} is based on a convex integration technique: smooth approximate solutions to the continuity equations are constructed, which in the weak limit produce an exact but only distributional solution. In each iterations step the error is decreased by adding a small oscillating perturbation to both density and velocity field.

In the past years convex integration has been applied very successfully on the Euler equations in order positively prove Onsager's conjecture (see, for instance \cite{Isett:2016to, Buckmaster:2017uz}). However, for obtaining Sobolev vector fields, i.e. fields with one full derivative (in some $L^{\tilde p}$ space) new ideas are required. Inspired by the \emph{intermittent Beltrami flow} used in the \cite{Buckmaster:2017wf} (see also \cite{buckmaster-colombo-vicol18} for the related notion of \emph{intermittent jets}), L.~Sz\'ekelyhidi and the first author  adopted, as  building block of their construction in the mentioned papers \cite{Modena2018, modena-szekelyhidi18}, some stationary solutions to the continuity equation called \emph{concentrated Mikado densities and field}, proving the analog of Theorem \ref{thm:main-weak-form} under the less restrictive assumption \eqref{eq:exponents-ms}. The idea of using ``Mikado flows'' for the equation of fluid dynamics was introduced for the first time by S.~Daneri and L.~Sz\'ekelyhidi  in \cite{SzekelyhidiJr:2016tp}. The ``concentrated'' Mikado are suitable modifications of the standard Mikado, having different scaling in different $L^p$ norms.  The $d-1$ in \eqref{eq:exponents-ms} comes from the fact that Mikado functions depends only on $d-1$ coordinates and thus only a $(d-1)$-dimensional concentration is possible.

In the present paper, we are able to substitute $d-1$ with $d$, as we use, as building block of our construction, suitable approximate solutions to the continuity equation, called \emph{space-time Mikado densities and fields}, see Section \ref{subsec:mikado} for the precise definition. Adding the time dependence to the building block allows, roughly speaking, to gain one further dimension and thus to pass from \eqref{eq:exponents-ms} to \eqref{eq:exponents-main-condition}.

\subsection{Extension to transport-diffusion and to higher regularity}

Similarly to \cite{Modena2018, modena-szekelyhidi18}, Theorem \ref{thm:weak} (and thus also Theorem \ref{thm:main-weak-form}) can be extended to cover the case of the transport-diffusion equation
\begin{equation}
\begin{aligned}
\partial_t\rho + \div(\rho u) -\Delta\rho &= 0, \\
\div u &=0,
\label{eq:transport-diffusion}
\end{aligned}
\end{equation}
provided more restrictive conditions on the exponent $p, \tilde p$ are assumed. 
Roughly speaking, the non-uniqueness produced by the transport term $\div (\rho u)$ (i.e. by the interplay between density and field) can be so strong that it can beat the regularizing effect induced by a diffusion operator (see to \cite{Modena2018} for a more comprehensive discussion on this subject).

\begin{thm}[Analog of Theorem~\ref{thm:weak} for the Transport-diffusion equation]
	\label{thm:diffusion}
	Let $\varepsilon>0$, let $\bar{\rho}\in C^\infty([0,T]\times\Td)$ with zero mean value and let $\bar{u}\in C^\infty([0,T]\times\Td,\R^d)$ be a divergence-free field. Set $E\coloneqq\left\lbrace t\in[0,T]:\partial_t \bar{\rho}+\div(\bar{\rho}\bar{u})-\Delta\bar{\rho}=0 \right\rbrace$. Let $p\in(1,\infty)$ and $\tilde{p}\in [1,\infty)$ such that
	\begin{equation}
	\frac{1}{p} + \frac{1}{\tilde p} > 1 + \frac{1}{d}, \qquad p'<d.
%	1+\frac{1}{d}-\frac{1}{\tilde{p}}<\frac{1}{p}<1-\frac{1}{d}.
	\label{eq:p-cond-diffusion}
	\end{equation}
	Then there are functions $\rho:[0,T]\times\Td\to\R$ and $u:[0,T]\times\Td\to\R^d$ such that
	\begin{enumerate}[(i)]
		\item $\rho\in C\left([0,T],L^p(\Td)\right)$ and $u\in C\left([0,T],L^{p'}(\Td)\right) \cap C\left([0,T],W^{1,\tilde{p}}(\Td)\right)$;
		\item $(\rho,u)$ is a distributional solution of \eqref{eq:transport-diffusion};
		\item $(\rho,u)(t) = (\bar{\rho},\bar{u})(t)$ for all $t\in E$;
	\item $\|\rho(t)-\bar{\rho}(t)\|_{L^p} < \varepsilon$ for all $t\in[0,T]$.
	\end{enumerate}
	Statement (iv) can be replaced by the similar
	\begin{enumerate}[(i')]
		\setcounter{enumi}{3}
		\item $\|u(t)-\bar{u}(t)\|_{L^{p'}} < \varepsilon$ for all $t\in[0,T]$.
	\end{enumerate}
\end{thm}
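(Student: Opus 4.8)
The plan is to prove Theorem~\ref{thm:diffusion} by running the \emph{same} convex integration iteration that proves Theorem~\ref{thm:weak}, carrying the extra Laplacian term through the construction and checking that it is absorbed into the error at each stage under the additional hypothesis $p'<d$. Concretely, I would construct smooth triples $(\rho_n,u_n,R_n)$ on $[0,T]\times\Td$ solving the transport-diffusion-defect equation
\begin{equation*}
\partial_t\rho_n+\div(\rho_nu_n)-\Delta\rho_n=\div R_n,\qquad \div u_n=0,
\end{equation*}
with $(\rho_n,u_n)(t)=(\bar\rho,\bar u)(t)$ for $t\in E$ (hence $R_n(t)=0$ on $E$), with $\|R_n\|_{L^1_{t,x}}\to 0$, with $\|\rho_n(t)\|_{L^p}$ and $\|u_n(t)\|_{L^{p'}}$ uniformly bounded in $n$ and $t$, and with the increments $\rho_{n+1}-\rho_n$ summable in $C_tL^p_x$ and $u_{n+1}-u_n$ summable in $C_tL^{p'}_x\cap C_tW^{1,\tilde p}_x$ --- for the $W^{1,\tilde p}$ part one exploits the first inequality in \eqref{eq:p-cond-diffusion}. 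Taking $n\to\infty$ produces $(\rho,u)$ with the regularity in (i), which is a distributional solution of \eqref{eq:transport-diffusion}, satisfies (iii) because $(\rho_n,u_n)=(\bar\rho,\bar u)$ on $E$ at every stage, and satisfies (iv) (or (i')) by choosing the size of the first perturbation appropriately. Since $p>1$ we always have $p'<\infty$, so the $p=1$ continuity issues of Theorem~\ref{thm:weak} do not occur here.

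The inductive step coincides with the one for Theorem~\ref{thm:weak} except for a single new error term. Setting $\rho_{n+1}=\rho_n+\vartheta_{n+1}+(\text{correctors})$ and $u_{n+1}=u_n+w_{n+1}+(\text{correctors})$, where $(\vartheta_{n+1},w_{n+1})$ is assembled from the space-time Mikado densities and fields of Section~\ref{subsec:mikado} oscillating at a large frequency $\lambda=\lambda_{n+1}$ with concentration parameter $\mu=\mu_{n+1}$, tuned so that the low-frequency part of $\div(\vartheta_{n+1}w_{n+1})$ cancels $\div R_n$, the new defect is
\begin{equation*}
R_{n+1}=R^{\mathrm{quadr}}_{n+1}+R^{\mathrm{time}}_{n+1}+R^{\mathrm{lin}}_{n+1}+R^{\mathrm{corr}}_{n+1}+R^{\mathrm{diff}}_{n+1},
\end{equation*}
where the first four terms are precisely the ones controlled in the proof of Theorem~\ref{thm:weak} --- the quadratic and time terms using the $\lambda^{-1}$ gain of the antidivergence $\adiv$, the linear and corrector terms using $\tfrac1p+\tfrac1{\tilde p}>1+\tfrac1d$ --- and
\begin{equation*}
R^{\mathrm{diff}}_{n+1}:=-\nabla\vartheta_{n+1}
\end{equation*}
is the new term, obtained from $-\Delta\vartheta_{n+1}=\div(-\nabla\vartheta_{n+1})$ (lower-order corrector contributions being absorbed into $R^{\mathrm{corr}}_{n+1}$). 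With the $L^p$-normalisation of $\vartheta_{n+1}$ and the full $d$-dimensional concentration of the space-time Mikado one obtains an estimate of the form $\|R^{\mathrm{diff}}_{n+1}\|_{L^1}\lesssim \lambda_{n+1}\,\mu_{n+1}^{\,1-d/p'}$. Because $p'<d$ the exponent $1-d/p'$ is strictly negative, so this term can be made summable by taking $\mu_{n+1}$ a sufficiently large power of $\lambda_{n+1}$; and $\tfrac1p+\tfrac1{\tilde p}>1+\tfrac1d$ guarantees that the same choice --- strong concentration, $\mu_{n+1}=\lambda_{n+1}^{\,b}$ with $b$ large --- keeps $\|w_{n+1}\|_{W^{1,\tilde p}}\lesssim \lambda_{n+1}\mu_{n+1}^{-\beta}$ bounded, with $\beta=d\bigl(\tfrac1{\tilde p}-\tfrac1{p'}\bigr)-1>0$, and the perturbations small, so that a common admissible window for $b$ exists exactly when both conditions in \eqref{eq:p-cond-diffusion} hold.

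I expect the heart of the proof --- and the only place where it genuinely departs from the proof of Theorem~\ref{thm:weak} --- to be this parameter bookkeeping: the diffusion term $-\nabla\vartheta_{n+1}$ forces the concentration $\mu_{n+1}$ to be at least a fixed positive power of $\lambda_{n+1}$ (this is where $p'<d$ enters), while the quadratic and linear errors and the Sobolev bound on $w_{n+1}$ impose their own constraints, and one must verify these are simultaneously satisfiable and then fix explicit exponents $b$, growth rates $\lambda_n\uparrow\infty$ and error sizes $\delta_n\downarrow 0$. Once the parameters are chosen, the construction of $(\vartheta_{n+1},w_{n+1})$, the verification of the cancellation of $\div R_n$, and the estimates on $R^{\mathrm{quadr}}_{n+1},R^{\mathrm{time}}_{n+1},R^{\mathrm{lin}}_{n+1},R^{\mathrm{corr}}_{n+1}$ are word for word as in Theorem~\ref{thm:weak}; accordingly I would phrase the inductive proposition with $R^{\mathrm{diff}}$ present from the start and simply track the extra term through the same computations, treating \eqref{eq:transport-diffusion} otherwise exactly like \eqref{eq:continuity-eqn}.
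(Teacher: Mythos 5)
Your overall plan coincides with the paper's (Section~\ref{sec:appendix}): keep the construction of $(\rho_1,u_1)$ unchanged, write $-\Delta(\rho_1-\rho_0)=\div\big(-\nabla(\rho_1-\rho_0)\big)$, add $-\nabla(\rho_1-\rho_0)$ to the defect field, and verify it is small in $L^1$ using $p'<d$. However, your claimed estimate $\|R^{\mathrm{diff}}\|_{L^1}\lesssim\lambda\,\mu^{1-d/p'}$ is incomplete: the space-time Mikado density $\Theta^j$ carries the additional fast-oscillating factor $\psi^j(\nu x)$, and taking a gradient brings down a factor $\nu$ from that term, so the correct bound is
\[
\|\nabla\vartheta(t)\|_{L^1}\ \lesssim\ \frac{\lambda\mu+\nu}{\mu^{\,d/p'}} .
\]
Since $\nu\gg\lambda\mu$ in the hierarchy $\mu=\lambda^{\alpha}$, $\nu=\lambda^{\gamma}$ with $\gamma>\alpha+1$, it is the $\nu/\mu^{d/p'}$ term (absent in your estimate) that actually governs the bookkeeping, and one must also check $\|\nabla q(t)\|_{L^1}\lesssim(\lambda\mu+\nu)/\omega$, which you relegate silently to $R^{\mathrm{corr}}$.

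Concretely, smallness of $\nu/\mu^{d/p'}$ requires $\gamma<b\alpha$ with $b=d/p'$, on top of the preexisting constraints $\alpha+1<\gamma<\alpha(1+\epsilon)$. The paper therefore redefines $\epsilon$ to satisfy also $\epsilon<d/p'-1$, so that $b\alpha>\alpha(1+\epsilon)>\gamma$; this is exactly where $p'<d$ is used. Your condition $p'<d$ is thus still the right hypothesis and the argument does close, but the quantitative heart of the step is the $\nu$-contribution, not the $\lambda\mu$-contribution you wrote down; without tracking it the claim ``$\mu_{n+1}$ a sufficiently large power of $\lambda_{n+1}$ makes $R^{\mathrm{diff}}$ summable'' does not follow from your stated estimate, because it ignores that one also needs $\nu$ large (to control the antidivergence errors), and $\nu$ in turn drives up $\|\nabla\vartheta\|_{L^1}$.
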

\begin{rem}
Notice that \eqref{eq:p-cond-diffusion} in particular requires $d>2$, so we cannot show non-uniqueness for the dissipative equation for $ d=2 $ as in the ``inviscid'' transport equation. 
\end{rem}

Theorems \ref{thm:weak} and \ref{thm:diffusion} can be further generalized to cover the generalized transport-diffusion equation
\begin{equation}
	\begin{aligned}
		\partial_t\rho + \div_x(\rho u) +L_k\rho &= 0, \\
		\div_x u &=0,
		\label{eq:high-order-diffusion}
	\end{aligned}
\end{equation}
where $ L_k $ is any constant-coefficient linear differential operator of grade $ k $ (not necessarily elliptic), and to produce more regular densities and vector fields.

\begin{thm}[Analog for solutions with higher regularity and higher order diffusion]
	\label{thm:higherReg}
	Let $ \varepsilon>0 $, let $\bar{\rho}\in C^\infty([0,T]\times\Td)$ with zero mean value and let $\bar{u}\in C^\infty([0,T]\times\Td,\R^d)$ be a divergence-free field. Let $ p,\tilde{p} \in[1,\infty)$ and $ m,\tilde{m}\in\N $ such that
	\begin{equation}
	\frac{1}{p} + \frac{1}{\tilde{p}} > 1+ \frac{m+\tilde{m}}{d} \ \mathrm{and} \ 
	\tilde{p} < \frac{d}{\tilde{m}+k-1}.
	\label{eq:p-cond-higher}
	\end{equation}
	Then there are $s \in [p, \infty]$ and functions $\rho:[0,T]\times\Td\to\R$ and $u:[0,T]\times\Td\to\R^d$ such that
	\begin{enumerate}[(i)]
		\item $\rho \in C([0,T], L^s(\Td))$, $u \in C([0,T], L^{s'}(\Td)$ and, moreover,  $\rho\in C\left([0,T],W^{m,p}(\Td)\right)$, $u\in C\left([0,T],W^{\tilde{m},\tilde{p}}(\Td)\right)$;
		\item $(\rho,u)$ is a distributional solution of \eqref{eq:high-order-diffusion};
		\item $(\rho,u)(t) = (\bar{\rho},\bar{u})(t)$ for all $t\in E$ defined as in Theorem~\ref{thm:weak};
	\item $\|\rho(t)-\bar{\rho}(t)\|_{L^s} < \varepsilon$ for all $t\in[0,T]$.
	\end{enumerate}
	Statement (iv) can be replaced by the similar
	\begin{enumerate}[(i')]
		\setcounter{enumi}{3}
		\item $\|u(t)-\bar{u}(t)\|_{L^{s'}} < \varepsilon$ for all $t\in[0,T]$.
	\end{enumerate}
\end{thm}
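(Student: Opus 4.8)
The plan is to run exactly the same convex integration scheme that proves Theorem \ref{thm:weak}, but to track how the various estimates scale with the two independent concentration parameters built into the space-time Mikado densities and fields, and then to choose those parameters so that the stronger condition \eqref{eq:p-cond-higher} is respected. Recall that the building blocks carry a concentration parameter, say $\lambda$ (the oscillation frequency) and a separate parameter, say $\mu$, governing the thickness of the tubes on which the Mikado are supported; in the proof of Theorem \ref{thm:weak} these are tuned so that the density gains $m=0$ derivatives in $L^p$ and the field gains one derivative in $L^{\tilde p}$. To produce a density in $W^{m,p}$ and a field in $W^{\tilde m,\tilde p}$, each spatial derivative that hits a Mikado function costs one factor of the oscillation frequency, so requiring $m$ derivatives on $\rho$ and $\tilde m$ on $u$ to remain bounded forces $m+\tilde m$ powers of concentration to be absorbed; this is precisely why $1/p+1/\tilde p > 1 + (m+\tilde m)/d$ appears in place of $1 + 1/d$. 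The second inequality $\tilde p < d/(\tilde m + k - 1)$ is the condition guaranteeing that, after one decides the scaling needed to beat the operator $L_k$ (of grade $k$), there is still enough room to fit $\tilde m$ derivatives of the field into $L^{\tilde p}$ — this is the exact analog of the condition $p'<d$ in Theorem \ref{thm:diffusion}, where $k=2$, $\tilde m = 0$, and the Sobolev embedding $W^{1,\tilde p}\hookrightarrow L^{p'}$ was used; here one instead picks $s$ by interpolation/embedding so that $W^{\tilde m,\tilde p}\hookrightarrow L^{s'}$ and $W^{m,p}\hookrightarrow L^s$, and $s\in[p,\infty]$ is then the natural endpoint exponent on which the iteration is actually closed.

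The concrete steps are as follows. First, I would set up the inductive proposition: at stage $n$ one has smooth $(\rho_n,u_n)$ solving \eqref{eq:high-order-diffusion} up to an error (a vector field $R_n$ with $\div R_n$ equal to the defect in the continuity equation), together with quantitative bounds on $\|\rho_n\|_{W^{m,p}}$, $\|u_n\|_{W^{\tilde m,\tilde p}}$, on the $L^s$/$L^{s'}$ norms, and on $\|R_n\|_{L^1}$, all in terms of a rapidly growing frequency sequence $\lambda_n$. Second, I would carry out the perturbation step: add $\vartheta_{n+1}$ to the density and $w_{n+1}$ to the field, each built from the space-time Mikado of Section \ref{subsec:mikado} localized on the support of $R_n$ and oscillating at frequency $\lambda_{n+1}$, arranged so that the low-frequency part of $\vartheta_{n+1}\otimes w_{n+1}$ (more precisely $\vartheta_{n+1}w_{n+1}$) cancels $R_n$ to leading order. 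Third, I would estimate the new error $R_{n+1}$, which splits into a transport term, an oscillation term, a "corrector" term, and — the new feature — a term coming from $L_k(\vartheta_{n+1})$; each of these is controlled by negative powers of $\lambda_{n+1}$ provided \eqref{eq:p-cond-higher} holds, the $L_k$ term being the one that eats into the budget via the exponent $k$. Fourth, summing the telescoping series gives $(\rho,u) = \lim_n(\rho_n,u_n)$ converging in $C_t L^s_x \times C_t L^{s'}_x$ and in $C_t W^{m,p}_x \times C_t W^{\tilde m,\tilde p}_x$, with the error going to zero in $C_t L^1_x$, so the limit is a distributional solution; properties (iii) and (iv)/(iv') are arranged exactly as in Theorem \ref{thm:weak}, by making the perturbations vanish on $E$ and by a final rescaling of the smallness parameter $\varepsilon$.

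The main obstacle I anticipate is the bookkeeping in the third step, and in particular verifying that the $L_k$-induced error term is genuinely subleading. Since $L_k$ has grade $k$ but need not be elliptic, one cannot use a smoothing estimate; instead one must directly bound $\|L_k\vartheta_{n+1}\|$ in a suitable negative Sobolev norm (the one dual to the space in which the error is measured), and each of the $k$ derivatives contributes a factor of $\lambda_{n+1}$, partially offset by the concentration gain of the Mikado. Pinning down that the exponent inequality $\tilde p < d/(\tilde m + k - 1)$ is exactly what makes this offset favorable — simultaneously with keeping the $\tilde m$-th derivative of $u$ bounded and the $(m+\tilde m)/d$ condition intact — is the delicate part; once the scaling is chosen consistently, the remaining terms behave as in the non-diffusive case and the argument closes. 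A secondary, more routine, point is the correct choice of $s$: one takes $s$ to be whatever exponent in $[p,\infty]$ makes both embeddings $W^{m,p}\hookrightarrow L^s$ and $W^{\tilde m,\tilde p}\hookrightarrow L^{s'}$ available (this is where $p'<d$-type restrictions reappear), and one checks that the product $\vartheta w$ remains in $L^1$ so that \eqref{eq:high-order-diffusion} makes sense distributionally.
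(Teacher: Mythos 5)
Your high-level outline of the scheme (iterate a continuity--defect perturbation with space-time Mikado building blocks, track the extra $L_k$-error, choose concentration exponents to beat it) is the right one, but the specific mechanism you propose for producing the exponent $s$ is not just different from the paper's --- it cannot work. You say one should take $s$ so that the Sobolev embeddings $W^{m,p}\hookrightarrow L^s$ and $W^{\tilde m,\tilde p}\hookrightarrow L^{s'}$ are available and then read off the $L^s$, $L^{s'}$ bounds from the Sobolev bounds. For both embeddings to hold one needs $\frac1p-\frac md\le\frac1s\le 1-\frac1{\tilde p}+\frac{\tilde m}d$, which forces $\frac1p+\frac1{\tilde p}\le 1+\frac{m+\tilde m}{d}$. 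This is exactly the \emph{negation} of the first condition in \eqref{eq:p-cond-higher}: under the theorem's hypothesis there is no such $s$, i.e.\ the two embeddings are simultaneously impossible. In fact the failure of these embeddings is what creates the room for the convex integration scheme. The paper instead picks $s$ in the strictly \emph{opposite} regime, $\frac1p-\frac md>\frac1s>1+\frac{\tilde m}{d}-\frac1{\tilde p}$ (plus $\frac1{s'}>\frac{k-1}{d}$), and scales the Mikado profiles $\varphi_\mu=\mu^{d/s}\varphi(\mu\cdot)$, $\tilde\varphi_\mu=\mu^{d/s'}\varphi(\mu\cdot)$ so that the $L^s$ and $L^{s'}$ norms of the perturbations are bounded (preserved under concentration) while the $W^{m,p}$ and $W^{\tilde m,\tilde p}$ norms shrink as negative powers of $\mu$ (since $\epsilon_1,\epsilon_2>0$). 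These are two independent estimates on the same concentrated object, not an embedding of one space into another; the $L^s/L^{s'}$ bounds are what close the $L^1$-convergence of $\rho_n u_n$, and they cannot be recovered from the $W^{m,p}/W^{\tilde m,\tilde p}$ bounds.

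Relatedly, your handling of the $L_k$-term is left vague ("bound $L_k\vartheta_{n+1}$ in a suitable negative Sobolev norm"). The paper's mechanism is more concrete and worth stating: since $L_k$ has constant coefficients, write $L_k=\div\circ\tilde L_k$ for an order-$(k-1)$ operator $\tilde L_k$, so that $L_k\rho$ can be absorbed into the defect field $R$ via $\tilde L_k\rho$; one then tracks the \emph{extra} smallness estimate $\|\rho_1-\rho_0\|_{W^{k-1,1}}\le\delta$ in the main proposition, and since $\|\tilde L_k f\|_{L^1}\lesssim\|f\|_{W^{k-1,1}}$ this guarantees that the diffusion contribution to $R_n$ vanishes in the limit. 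This is why the third positive exponent $\epsilon_3=\frac{d}{s'(k-1)}-1>0$ (equivalently $\frac1{s'}>\frac{k-1}{d}$) is required, and why the second inequality $\tilde p<\frac{d}{\tilde m+k-1}$ of \eqref{eq:p-cond-higher} appears: it is precisely what guarantees that the interval of admissible $\frac1s$ is nonempty once the constraint $\frac1{s'}>\frac{k-1}{d}$ is imposed alongside the first inequality. You should rebuild your choice of $s$ and the $L_k$ bookkeeping around this picture; the remaining estimates then do proceed as you indicate, in close analogy with Proposition~\ref{prop:main}.
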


\begin{rem}
Observe also that, if we choose $m=0$, $\tilde m = 1$, $k = 2$ in Theorem \ref{thm:higherReg}, the first condition in \eqref{eq:p-cond-higher} reduces to  the first condition in \eqref{eq:p-cond-diffusion}, nevertheless \eqref{eq:p-cond-higher} is not equivalent to \eqref{eq:p-cond-diffusion}. Indeed \eqref{eq:p-cond-diffusion} implies \eqref{eq:p-cond-higher}, but the viceversa is not true, in general. This can be explained by the fact that Theorem \ref{thm:diffusion}, for any given $p$, produces a vector field $u \in C_t L^{p'}_x$, whereas Theorem \ref{thm:higherReg} produces $u \in C_t L^{s'}_x$ for some $s' \leq p'$. 
\end{rem}

\begin{rem}
In Section \ref{sec:prop} we state the main Proposition of this paper, namely Proposition \ref{prop:main}, and we show how Theorem \ref{thm:weak} can be deduced from Proposition \ref{prop:main}. In Sections \ref{sec:tools}-\ref{sec:proof} we give a complete proof of Proposition \ref{prop:main},  assuming $p > 1$, for the sake of simplicity. In Section \ref{sec:appendix} we give a sketch of the proof of Proposition \ref{prop:main} in the case $p=1$ as well as a sketch of the proofs of Theorems \ref{thm:diffusion} and \ref{thm:higherReg}. 
\end{rem}
%
%
%The papers is organized as follows. In \Cref{sec:prop} we state the main iteration proposition and deduce Theorem~\ref{thm:weak}. In \Cref{sec:tools} some technical lemmata which will be needed are collected. The proof of the main proposition will cover \Cref{sec:error,sec:perturbation,sec:proof}, with the construction used in the iteration illustrated in \Cref{subsec:mikado,subsec:construct}. In \Cref{sec:appendix} a the proof of a special case of Theorem~\ref{thm:weak} as well as the proofs of Theorems~\ref{thm:diffusion} and~\ref{thm:higherReg} are sketched.

\subsection{Notations}
We fix some notations which will be used throughout the paper. 
\begin{itemize}
%	\item $ \Td $ denotes the $ d $-dimensional flat torus, i.e.~the periodic continuations of the unit cube $ [0,1]^d $ in $ \R^d $.
	\item Integrals, $ L^p $-norms and Sobolev norms of functions defined on $ [0,T]\times\Td $ will always be evaluated on the space $ \Td $ at a single time $ t $, we will write
	\[ \|\rho(t)\|_{L^p} = \|\rho(t,\cdot)\|_{L^p(\Td)} \text{ and } \int_{\Td} \rho = \int_{\Td} \rho(t,x) dx. \]
	\item Similarly, all differential operators (except $ \partial_t $, of course) apply on the space variable: $ \partial_j=\frac{\partial}{\partial_{x_j}},\ \div=\div_x,\ \Delta=\Delta_x,\ldots $.
	\item In contrast, $ C^k $-norms are always evaluated on the space-time $ [0,T]\times\Td $.
	\item If a function is stated to have zero mean value we always mean `in the space variable'. Define $ C_0^\infty $ to be the space of smooth functions which have zero mean value:
	\[ C_0^\infty(\Td) \coloneqq \left\{f:\Td\to\R \text{ smooth such that } \fint_{\Td}f(x)dx =0 \right\}. \]
\item If not specified otherwise, for a periodic function $ f:\Td\to\R $ and  $ \lambda \in \N_+$, $ f_\lambda:\Td\to\R $ denotes the dilation $ f_\lambda(x) = f(\lambda x) $. Note that 
	\begin{equation}
	\label{eq:osc}
	\|D^k f_\lambda\|_{L^p(\Td)} = \lambda^k \|D^k f\|_{L^p(\Td)}. 
	\end{equation}
\end{itemize}

\subsection*{Acknowledgment}
This research was supported by the ERC Grant Agreement No.~724298. The authors wish to thank Prof. L\'aszl\'o Sz\'ekelyhidi and Jonas Hirsch for several useful discussions on the topic of this paper.

\section{Main Proposition and proof of the theorem}
\label{sec:prop}

In this section we state the main proposition of this paper, Proposition \ref{prop:main}, and we use it in order to prove Theorem \ref{thm:weak}. Proposition \ref{prop:main} will be proven in details in Sections \ref{sec:tools}-\ref{sec:proof}, assuming, for simplicity, $p>1$. A sketch of the proof in the case $p=1$ can be found in Section \ref{subsec:continuous-vf}. 

  We introduce the (incompressible) continuity-defect equation
\begin{equation}
	\left.
	\begin{aligned}
		\partial_t \rho + \div(\rho u) &= -\div R \\
		\div u &=0
	\end{aligned}
	\right\}
	\label{eq:cont-defect}
\end{equation}
as an approximation of the transport equation. The iteration step of the Convex Integration scheme deals with solution to this system.

\begin{prop}
\label{prop:main}
	There is a constant $M>0$ such that the following holds. Let $p\in[1,\infty)$ and $\tilde{p}\in[1,\infty)$ so that
	\begin{equation}
	\frac{1}{p} +\frac{1}{\tilde{p}} > 1+\frac{1}{d}.
	\label{eq:p-cond}
	\end{equation}
	Then for any $\delta,\eta>0$ and any smooth solution $(\rho_0,u_0,R_0)$ of the continuity defect equation \eqref{eq:cont-defect} there is another smooth solution $(\rho_1,u_1,R_1)$ which fulfils the estimates
	\begin{subequations}
		\begin{align}
		\|\rho_1(t)-\rho_0(t)\|_{L^p} &\le M\eta \|R_0(t)\|_{L^1}^{1/p}
		\label{eq:rho1-rho0_Lp} \\
		\|u_1(t)-u_0(t)\|_{L^{p'}} &\le \frac{M}{\eta} \|R_0(t)\|_{L^1}^{1/p'}
		\label{eq:u1-u0_Lp'} \\
		\|u_1(t)-u_0(t)\|_{W^{1,\tilde{p}}} &\le \delta
		\label{eq:u1-u0_W1p} \\
		\|R_1(t)\|_{L^1} &\le \delta
		\label{eq:R1}
		\end{align}
	\end{subequations}
	for all $ t\in[0,T] $.
	Furthermore the solution is not changed at times where it is a proper solution of \eqref{eq:continuity-eqn}--\eqref{eq:incompressibility}, i.e.~if $R_0(t,\cdot)\equiv 0$ for some $t\in[0,T]$ then $ R_1(t)\equiv 0 $ and $(\rho_1,u_1)(t)\equiv(\rho_0,u_0)(t)$.
\end{prop}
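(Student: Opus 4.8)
The plan is to perform one step of the convex integration scheme, producing $\rho_1 = \rho_0 + \vartheta$ and $u_1 = u_0 + w$ by adding a fast, strongly concentrated perturbation. First I would regularise: mollify $(\rho_0,u_0,R_0)$ in space and time at a scale $\ell$ to get smooth $(\rho_\ell,u_\ell,R_\ell)$ solving the continuity-defect equation up to a small commutator, and multiply all amplitudes by a smooth cutoff of $\|R_0(t)\|_{L^1}$ so that the perturbation vanishes at times where $R_0(t,\cdot)\equiv 0$, which yields the last assertion. Then I would use a geometric decomposition lemma to write $R_\ell(t,x) = \sum_j \Gamma_j(t,x)\,\bar e_j$ over a fixed finite family of directions $\bar e_j\in\R^d$, with smooth non-negative amplitudes $\Gamma_j$ controlled by $\|R_\ell(t)\|_{L^1}$, and attach to each $\bar e_j$ a \emph{space-time Mikado pair} $(\Theta_j,W_j)$ from Section~\ref{subsec:mikado}: an \emph{exact} solution of $\partial_t\Theta_j + \div(\Theta_j W_j)=0$, with $\div W_j = 0$, $\fint \Theta_j W_j = -\bar e_j$, concentrated at scale $\mu^{-1}$ in \emph{all} $d$ space variables (a ``pipe'' in $(d{+}1)$-dimensional space-time), normalised so that $\|\Theta_j W_j\|_{L^1}$ is of unit order while $\|\Theta_j\|_{L^p}$ and $\|W_j\|_{L^{p'}}$ carry opposite powers of $\mu$. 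The perturbation is then, schematically, $\vartheta = \eta\sum_j A_j(t,x)\,\Theta_j(\lambda\,\cdot\,) + \vartheta_c$ and $w = \eta^{-1}\sum_j B_j(t,x)\,W_j(\lambda\,\cdot\,) + w_c$, with slow amplitudes $A_j\propto\Gamma_j^{1/p}$, $B_j\propto\Gamma_j^{1/p'}$ (times compensating powers of $\mu$), a large oscillation frequency $\lambda$, and correctors $w_c$ (restoring $\div w = 0$) and $\vartheta_c$ (absorbing the worst time-derivative terms); the free factor $\eta$ only redistributes $L^p$/$L^{p'}$ mass, and $\ell,\mu,\lambda$ are fixed at the very end.

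Next I would read off the new defect. Substituting into the continuity-defect equation, using $\partial_t\rho_0 + \div(\rho_0 u_0) = -\div R_0$ together with the average identity $\fint_x(\vartheta w)\approx -R_\ell$ (which is exactly what cancels the defect), I would set $R_1 \coloneqq -\adiv(\text{leftover terms})$, with $\adiv$ a physical-space antidivergence, the leftover being organised as: an \emph{oscillation error}, the nonzero-frequency part of $\vartheta w + R_\ell$, on which $\adiv$ gains a factor $\lambda^{-1}$; a \emph{linear interaction error} $\adiv\div(\rho_0 w + \vartheta u_0)$, where one uses $\div W_j = 0$ to see it also gains $\lambda^{-1}$; a \emph{transport/time error} from the derivatives $\partial_t,\nabla$ landing on the slow amplitudes $A_j,B_j,\Gamma_j$ and on $\rho_0,u_0$, together with the $\lambda\,\partial_t\Theta_j$-type contributions produced by the time-dependence of the building block --- these last ones are the genuinely new terms, absent in the stationary-Mikado constructions of \cite{Modena2018, modena-szekelyhidi18}; the corrector errors; and the mollification error. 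The only analytic tool needed is the improved estimate $\|\adiv\bigl(f\,g(\lambda\,\cdot\,)\bigr)\|_{L^1}\lesssim \lambda^{-1}\|f\|_{C^1}\,\|\adiv g\|_{L^1}$ valid for mean-zero $g$; since $\adiv$ is only ever applied to such fast-oscillating products, no Fourier or Calderón--Zygmund input is required, in keeping with the paper's stated philosophy.

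Then come the estimates. Inequalities \eqref{eq:rho1-rho0_Lp} and \eqref{eq:u1-u0_Lp'} follow from the $L^p$, resp.\ $L^{p'}$, norms of the principal parts: since the blocks have essentially disjoint supports, $\|\vartheta(t)\|_{L^p}\sim \eta\,\|R_\ell(t)\|_{L^1}^{1/p}$ and $\|w(t)\|_{L^{p'}}\sim \eta^{-1}\|R_\ell(t)\|_{L^1}^{1/p'}$ once the powers of $\mu$ in the amplitudes are matched against those in the block norms, and $\|R_\ell(t)\|_{L^1}\lesssim\|R_0(t)\|_{L^1}$; the product of the two right-hand sides is $\sim\|R_0\|_{L^1}$, matching $\|\vartheta w\|_{L^1}\sim\|R_\ell\|_{L^1}$. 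For \eqref{eq:u1-u0_W1p} one derivative of $w$ costs a factor $\lambda$, which must be defeated by the concentration: schematically $\|w(t)\|_{W^{1,\tilde p}}\sim \lambda\,\mu^{\,1 + d/p' - d/\tilde p}\cdot(\text{slow factor})$, small for $\lambda$ large precisely when the $\mu$-exponent is negative. For \eqref{eq:R1}, every error piece above must be made $\le\delta$: the oscillation and interaction errors are $\lambda^{-1}$ times slow quantities, the mollification error is small for $\ell$ small, and the transport/time error has to be controlled by choosing $\ell$, $\mu$ and the correctors compatibly.

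The crux --- and the step I expect to be the main obstacle --- is to verify that all these smallness requirements are jointly satisfiable: one must exhibit $\mu = \lambda^{b}$ with $0 < b < 1$, together with a choice of $\ell$ and of the $L^p$/$L^{p'}$ split of the block norms, for which the $W^{1,\tilde p}$ bound, the oscillation and interaction errors, the new time-error (which forces the correctors $\vartheta_c,w_c$ to be small at the same time --- the delicate point of the space-time construction) and, in the case $p=1$, the continuity of $u$, all hold as $\lambda\to\infty$. Unwinding the exponent bookkeeping, feasibility reduces to the $\mu$-exponent $1 + d/p' - d/\tilde p$ being negative, which rewrites as $\tfrac1p + \tfrac1{\tilde p} > 1 + \tfrac1d$, i.e.\ hypothesis \eqref{eq:p-cond}; the gain of $\tfrac1d$ over the $\tfrac1{d-1}$ of \cite{Modena2018, modena-szekelyhidi18} is exactly the effect of the \emph{full $d$-dimensional} concentration of the space-time Mikado blocks, the extra time variable being what allows a point-like (rather than filament-like) concentration still compatible with solving the continuity equation. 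Keeping the dangerous $W^{1,\tilde p}$ bound under control simultaneously with the $L^1$ bound on $R_1$, while taming this new time-error, is where the whole strength of \eqref{eq:p-cond} is spent.
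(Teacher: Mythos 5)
Your proposal has the right overall shape --- a perturbation built from fast, concentrated space-time blocks, improved H\"older for the principal $L^p$ and $L^{p'}$ bounds, an antidivergence to produce the new defect, and an exponent count that correctly reduces feasibility to \eqref{eq:p-cond} --- but it relies on a property of the building block that is precisely what \emph{cannot} be achieved in the space-time setting, and this is the gap the paper's whole construction is organised around.

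You posit a pair $(\Theta_j, W_j)$ concentrated at scale $\mu^{-1}$ in \emph{all} $d$ spatial directions and moving in time, with the ideal cancellations $\partial_t\Theta_j + \div(\Theta_j W_j)=0$ and $\div W_j = 0$. The paper explicitly notes (remark before Proposition~\ref{prop:mikados}) that these are impossible for a moving, compactly supported spot: $\div W^j \neq 0$ (cf.~\eqref{eq:div-wj}) and $\partial_t\Theta^j \neq 0$. What actually holds is the weaker \eqref{eq:Mikado-cancellation}, $\partial_t Q^j + \div(\Theta^j W^j) = 0$, where $Q^j = \omega^{-1}\Theta^j W^j\cdot e_j$ is a new \emph{quadratic corrector} added to the density perturbation (not $\vartheta_c$, which only fixes the mean). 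This corrector, and the resulting error terms $R^{\mathrm{time},1}$, $R^{\mathrm{time},2}$, $R^q$, are the genuinely new content of the paper; your proposal has no analogue of them. Along with this you also miss two of the four free parameters: the phase speed $\omega$ (which must be large so that $\|Q^j\|_{L^p}\sim\mu^b/\omega$ is small) and the very-fast oscillation $\nu$ of the transverse profile $\psi^j(\nu x)$ (which supplies the fast-oscillating, mean-zero factor on which the antidivergence acts). Without $\nu$ the gain in the antidivergence step is not available in the form you state, and without $\omega$ the quadratic corrector cannot be controlled. Relatedly, the single-gain estimate $\|\adiv(f\,g(\lambda\cdot))\|_{L^1}\lesssim\lambda^{-1}\|f\|_{C^1}\|\adiv g\|_{L^1}$ is not strong enough: the paper's bilinear operator $\adiv_N$ (Section~\ref{subsec:adiv}) iterates antiderivatives and gains $\nu^{-N}$, which is needed to beat the $\lambda\omega$ factor coming from $\partial_t\Theta^j$ in the estimate of $R^{\mathrm{time},2}$.

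A few smaller mismatches: the paper needs no mollification step (the input triple is already smooth) and hence has no mollification error; the cut-offs $\chi_j$ are pointwise functions of $|R_0^j(t,x)|$ (needed so that $|R_0^j|^{1/p}$ stays smooth near its zero set), not a cutoff in $\|R_0(t)\|_{L^1}$ alone; the decomposition of $R_0$ is simply along the coordinate directions $e_j$, with disjoint supports of the Mikado lines replacing any geometric lemma over a larger direction family; and the parameter hierarchy is $\mu=\lambda^\alpha$ with $\alpha>2/\epsilon$ (so $\mu\gg\lambda$, not $\mu=\lambda^b$ with $b<1$), together with $\nu=\lambda^\gamma\gg\mu$ and $\omega=\lambda^\beta$. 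The final exponent condition you derive is correct, but the machinery that makes all the intermediate estimates close simultaneously --- quadratic corrector, moving-spot speed $\omega$, fast transverse oscillation $\nu$, iterated antidivergence $\adiv_N$ --- is exactly what your sketch leaves out.
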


\begin{proof}[Proof of Theorem~\ref{thm:weak}, assuming Proposition \ref{prop:main}]
We will use the proposition to construct a sequence $ (\rho_n,u_n,R_n)_{n\in\N} $ of solutions to \eqref{eq:cont-defect} in the space $$ C\left([0,T], L^p(\Td)\times \left( L^{p'}\cap W^{1,q-}(\Td,\R^d)\right) \times L^1(\Td,\R^d) \right)$$ (with $q$ as in \eqref{eq:p-q-cond}), which in the limit will produce a solution of \eqref{eq:continuity-eqn}--\eqref{eq:incompressibility}.

Set $ (\rho_0,u_0) \coloneqq (\bar{\rho},\bar{u})$ as given in the statement of the theorem and define
\[ R_0(t)\coloneqq -\nabla\Delta^{-1} \left[ \partial_t \bar{\rho}(t) + \div\left( \bar{\rho}(t)\bar{u}(t) \right) \right]. \]
Recall that $ \partial_t \bar{\rho} $ has zero mean value by assumption and $ \div(\bar{\rho}\bar{u}) $ also, being a divergence, so the definition is correct. Then clearly $ (\rho_0,u_0,R_0) $ is a smooth solution of \eqref{eq:cont-defect}.

Set $ \delta_0\coloneqq\|R_0\|_{C_tL_x^1} $ and choose a sequence of positive numbers $ \delta_n $, $ n\ge1 $ such that the sum $\sum_n \delta_n^{1/2}$ converges. (Then in particular $\sum_n \delta_n <\infty$.) Furthermore choose sequences $ (\tilde{p}_n)_{n\in\N} \subset [1,q)$ and $ (\eta_n)_{n\in\N} \subset (1,\infty) $ such that
\[ \tilde{p}_n\xrightarrow{n\to\infty} q \ \text{ and }\  \delta_n^{1/p}\eta_n=\sigma\delta_n^{1/2} \]
for some $ \sigma>0 $ to be chosen later and observe that $ \delta_n^{1/p'}/\eta_n= \delta_n^{1/2}/\sigma $. By repeated application of Proposition~\ref{prop:main} we obtain a sequence of smooth solutions $ (\rho_n,u_n,R_n) $ fulfilling the bounds (uniformly in time)
\begin{subequations}
	\begin{align}
		\|\rho_{n+1}(t)-\rho_n(t)\|_{L^p} \le M\eta_n \|R_n(t)\|_{L^1}^{1/p} &\le M \sigma\delta_n^{1/2}
		\label{eq:rhon+1-rhon_Lp} \\
		\|u_{n+1}(t)-u_n(t)\|_{L^{p'}} \le \frac{M}{\eta_n} \|R_n(t)\|_{L^1}^{1/p'} &\le \frac{M}{\sigma} \delta_n^{1/2}
		\label{eq:un+1-un_Lp'} \\
		\|u_{n+1}(t)-u_n(t)\|_{W^{1,\tilde{p}_n}} &\le \delta_{n+1}
		\label{eq:un+1-un_W1p} \\
		\|R_{n+1}(t)\|_{L^1} &\le \delta_{n+1}
		\label{eq:Rn+1} \\
		R_n(t) = 0 \implies R_{n+1}(t) &= 0.
		\label{eq:Rn(t)0}
	\end{align}
\end{subequations}
Clearly there are functions $ \rho\in C_tL^p_x $ and $ u\in C_tL^{p'}_x\cap C_tW^{1,\tilde{p}}_x $ for any $ \tilde{p}<q $ such that
$ \rho_n \to \rho $ in $ C_tL^p_x $ and $ u_n \to u $ in $ C_tL^{p'}_x $ and $ C_tW^{1,\tilde{p}}_x $. Moreover, we have $ \rho_n u_n \to \rho u$ and $ R_n \to0 $ in $ C_tL^1_x $, which proves statements (i) and (ii) of the theorem. For $ t\in E $ by \eqref{eq:Rn(t)0} we have $ R_n(t) =0$ for all $n$ and therefore, by \eqref{eq:rhon+1-rhon_Lp} and \eqref{eq:un+1-un_Lp'}
\[ \rho_n(t)=\bar{\rho}(t),\ u_n(t)=\bar{u}(t)\ \forall\,n \]
which implies statement (iii). For the last statement we need to choose a sufficiently small (or large) $\sigma$ so that $ M\sigma\sum_{n=0}^\infty \delta_n^{1/2}<\varepsilon$ (or $ M\sigma^{-1}\sum_{n=0}^\infty \delta_n^{1/2}<\varepsilon $). So we can ensure that statement (iv) (or statement (iv)', respectively) holds by our choice of $ \sigma $. If $p=1$ (and thus $p'=\infty$), then the continuity in space-time of the limit $u$ follows from \eqref{eq:un+1-un_Lp'}, observing  that, in this case, $u$ is the uniform limit of the smooth vector fields $u_n$. This concludes the proof of the main theorem.
\end{proof}

We will only prove Proposition~\ref{prop:higherReg} in the case $ p>1 $, the proof will cover \Cref{sec:error,sec:perturbation,sec:proof}. The case $ p=1 $, in which the obtained velocity field is in particular continuous (although continuity via Sobolev embeddings just exactly fails to hold), is more delicate to prove. We refer to~\cite{modena-szekelyhidi18} for the details and will sketch the strategy and the necessary adaptations in \Cref{sec:appendix}.

\section{Technical Tools}
\label{sec:tools}

In this section we provide some technical tools we will use throughout the paper.

\subsection{Improved Hölder inequality for fast oscillations}

We recall the following lemma from \cite{Modena2018}:
\begin{lem}
	\label{lem:improvedH}
	For $ p\in[1,\infty] $ there is a constant $ C_p $ such that for all smooth functions $ f,g $ on the torus $ \Td $ and $ \lambda\in\N $:
	\begin{equation*}
		\big|\| fg_\lambda \|_{L^p} -\|f\|_{L^p} \|g\|_{L^p} \big| \le \frac{C_p}{\lambda^{1/p}} \|f\|_{C^1} \|g\|_{L^p}.
		\label{eq:Holder}
	\end{equation*}
\end{lem}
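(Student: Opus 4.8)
The plan is to prove the improved Hölder inequality by reducing it, via a partition of the torus into small cubes of side $1/\lambda$, to a local estimate on each cube where $f$ is nearly constant and $g_\lambda$ runs over a full period. First I would tile $\Td$ by the $\lambda^d$ cubes $Q_j = \lambda^{-1}(j + [0,1)^d)$ for $j \in \{0,\dots,\lambda-1\}^d$, and pick a reference point $x_j \in Q_j$, say its center. On each $Q_j$ I would compare $\int_{Q_j} |f|^p |g_\lambda|^p$ with $|f(x_j)|^p \int_{Q_j} |g_\lambda|^p$. The change of variables $x = \lambda^{-1}(j + y)$, $y \in [0,1)^d$, turns $\int_{Q_j} |g_\lambda|^p\,dx = \lambda^{-d}\int_{[0,1)^d} |g(j + y)|^p\,dy = \lambda^{-d}\|g\|_{L^p(\Td)}^p$ by periodicity of $g$, which is the key cancellation that makes the local pieces reassemble into $\|f\|_{L^p}^p\|g\|_{L^p}^p$.

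The main work is then the error control. Writing $a_j = \big(\int_{Q_j}|f|^p|g_\lambda|^p\big)^{1/p}$ and $b_j = |f(x_j)|\,\big(\int_{Q_j}|g_\lambda|^p\big)^{1/p} = |f(x_j)|\,\lambda^{-d/p}\|g\|_{L^p}$, I would estimate $|a_j - b_j|$ using the elementary inequality $|\,\|F\|_{L^p(Q_j)} - \||f(x_j)| g_\lambda\|_{L^p(Q_j)}\,| \le \|(f - f(x_j))g_\lambda\|_{L^p(Q_j)} \le \big(\sup_{Q_j}|f - f(x_j)|\big)\|g_\lambda\|_{L^p(Q_j)} \le \lambda^{-1}\sqrt d\,\|f\|_{C^1}\,\lambda^{-d/p}\|g\|_{L^p}$, since $\operatorname{diam} Q_j = \sqrt d/\lambda$. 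Summing in $\ell^p$ over the $\lambda^d$ indices $j$ and using the triangle inequality in $\ell^p$ gives $\big|\,\|fg_\lambda\|_{L^p(\Td)} - \big(\sum_j b_j^p\big)^{1/p}\,\big| \le \big(\sum_j |a_j-b_j|^p\big)^{1/p} \le \big(\lambda^d \cdot (\lambda^{-1}\sqrt d\|f\|_{C^1}\lambda^{-d/p}\|g\|_{L^p})^p\big)^{1/p} = \sqrt d\,\lambda^{-1}\|f\|_{C^1}\|g\|_{L^p}$. Meanwhile $\big(\sum_j b_j^p\big)^{1/p} = \lambda^{-d/p}\|g\|_{L^p}\big(\sum_j |f(x_j)|^p\big)^{1/p}$, and $\lambda^{-d}\sum_j |f(x_j)|^p$ is a Riemann sum for $\|f\|_{L^p}^p$ with error $O(\lambda^{-1}\|f\|_{C^1}\||f|^{p-1}\|_{L^\infty}\cdot\text{stuff})$; a cleaner route is to compare $\big(\sum_j b_j^p\big)^{1/p}$ directly with $\|f\|_{L^p(\Td)}\|g\|_{L^p(\Td)} = \lambda^{-d/p}\|g\|_{L^p}\big(\sum_j \int_{Q_j}|f|^p\,dx \cdot \lambda^d\big)^{1/p}$, i.e. to control $\big|\,\big(\sum_j |f(x_j)|^p\big)^{1/p} - \big(\lambda^d\sum_j\int_{Q_j}|f|^p\big)^{1/p}\,\big|$, again by the $\ell^p$ triangle inequality and $\big|\,|f(x_j)|\lambda^{-d/p} - \|f\|_{L^p(Q_j)}\,\big| \le \|f - f(x_j)\|_{L^p(Q_j)} \le \lambda^{-1}\sqrt d\,\|f\|_{C^1}\lambda^{-d/p}$. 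Combining the two triangle-inequality steps yields the bound with a constant $C_p$ proportional to $\sqrt d$ (one may absorb the factor $2\sqrt d$ or similar into $C_p$); the $\lambda^{-1/p}$ rate in the statement — weaker than the $\lambda^{-1}$ one gets here — is comfortably implied, so I would simply note $\lambda^{-1} \le \lambda^{-1/p}$.

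I expect the only genuinely delicate point to be bookkeeping the two successive $\ell^p$-triangle-inequality reductions cleanly (local $L^p$ versus the value at $x_j$, and then the discrete $\ell^p$ sum versus the integral), making sure the periodicity of $g$ is invoked at the right moment so that $\|g_\lambda\|_{L^p(Q_j)}$ is \emph{exactly} $\lambda^{-d/p}\|g\|_{L^p(\Td)}$ on every cube with no remainder — this exactness is what lets the main terms telescope into the product $\|f\|_{L^p}\|g\|_{L^p}$. For $p = \infty$ the argument degenerates appropriately: $\|fg_\lambda\|_{L^\infty(Q_j)}$ is within $\operatorname{diam}(Q_j)\|f\|_{C^1}\|g\|_{L^\infty}$ of $|f(x_j)|\|g\|_{L^\infty}$, and taking the max over $j$ gives the claim with $\lambda^{-1/\infty} = 1$, so the stated inequality is trivially true (indeed one gets the stronger $\lambda^{-1}$ decay). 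Since this is quoted as Lemma~\ref{lem:improvedH} from \cite{Modena2018}, I would keep the writeup short and refer there for the elementary details of the Riemann-sum estimate.
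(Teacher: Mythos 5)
Your proof is correct, and a small but genuine observation is worth flagging. The paper does not prove Lemma~\ref{lem:improvedH}; it quotes it from \cite{Modena2018}, where the argument goes through the $p$-th powers: one writes $\|fg_\lambda\|_{L^p}^p = \int |f|^p |g_\lambda|^p$, compares it with $\|f\|_{L^p}^p\|g\|_{L^p}^p$ by a Riemann-sum estimate (error $O(\lambda^{-1})$), and then extracts $p$-th roots via $|A-B| \le |A^p - B^p|^{1/p}$ for nonnegative $A,B$, which is exactly where the rate degrades from $\lambda^{-1}$ to $\lambda^{-1/p}$. Your argument instead never passes through $p$-th powers: you apply the reverse triangle inequality twice, once inside each cube in $L^p(Q_j)$ to compare $\|fg_\lambda\|_{L^p(Q_j)}$ with $|f(x_j)|\,\|g_\lambda\|_{L^p(Q_j)}$, and once discretely in $\ell^p$ over the $\lambda^d$ cubes, together with the exact identity $\|g_\lambda\|_{L^p(Q_j)} = \lambda^{-d/p}\|g\|_{L^p(\Td)}$. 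Tracking the powers of $\lambda$ carefully shows that both error contributions are $O(\lambda^{-1}\|f\|_{C^1}\|g\|_{L^p})$ with a constant depending only on $d$, not on $p$, so you actually prove the stronger bound
\begin{equation*}
\big|\| fg_\lambda \|_{L^p} -\|f\|_{L^p} \|g\|_{L^p} \big| \le \frac{C_d}{\lambda} \|f\|_{C^1} \|g\|_{L^p},
\end{equation*}
which implies the stated $\lambda^{-1/p}$ version since $\lambda \ge 1$. The only point where you should be slightly more explicit is the $p=\infty$ case, where ``summing in $\ell^p$'' should be read as taking a maximum over $j$, and where $\|g_\lambda\|_{L^\infty(Q_j)} = \|g\|_{L^\infty(\Td)}$ by periodicity; done this way your argument still gives the $\lambda^{-1}$ decay, which is more than the lemma claims at $p=\infty$ (where the right-hand side has no decay at all). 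In short: correct, self-contained, and in fact sharper than the quoted statement; the difference from the cited proof is that you work at the level of the norms themselves rather than their $p$-th powers.
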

\begin{rem}
	In particular this lemma supplies the Hölder-like inequality
	\begin{equation}
	\|fg_\lambda\|_{L^p} \le \|f\|_{L^p} \|g\|_{L^p} + \frac{C_p}{\lambda^{1/p}} \|f\|_{C^1} \|g\|_{L^p}.
	\label{eq:improvedH}
	\end{equation}
	which allows to bound the product by the $L^p$ norm of both functions, plus some error term which is small if one function is fastly oscillating, i.e.~$ \lambda $ is large.  
\end{rem}

\subsection{Higher Derivatives and Antiderivatives}
As for smooth $ f $, with $\fint_{\Td} f = 0$, the Poisson equation $ \Delta u=f $ has a solution on the flat torus which is unique up to addition of a constant, the inverse Laplacian
\[ \Delta^{-1} : C_0^\infty\to C_0^\infty ,\ f\mapsto u \]
is well-defined as an operator on the space $ C_0^\infty $. We can now use it to define higher order (anti)derivatives with a simple structure.

\begin{defi}
	For any smooth function $f \in C^\infty(\Td)$ on the torus and non-negative integers $k$ we define the differential operator $ \D^k $:
\[ \D^k f  =
\begin{dcases*}
\Delta^{k/2} f, & if $k$ even, \\
\nabla\Delta^{\frac{k-1}{2}} f, & if $k$ odd,
\end{dcases*} \]
with the convention that $ \D^0=\Delta^0=Id $.

For negative $k$ the definition is identical with the additional condition $ f\in C_0^\infty(\Td) $, which is necessary so that negative powers of the Laplacian are meaningful.
\end{defi}

\begin{rem} 
	The basic properties of the operators $ \D^k $ include
	\begin{itemize}
		\item Commutes with derivatives: $ \partial^\alpha \D^k f = \D^k\partial^\alpha f $ for all $ k\in\Z $ and any multi-index $ \alpha $.
		\item Partial Integration: For any $ k,n,m\in\Z $ and $ f,g\in C_0^\infty(\Td) $
		\[ \int_{\Td} \D^kf \cdot \D^{m+n}g = (-1)^n \int_{\Td} \D^{k+n}f \cdot \D^mg, \]
		where the `$ \cdot $' denotes scalar product if both factors are vectors, otherwise standard multiplication.
		\item  Scaling: $ \D^ku_\lambda = \lambda^k(\D^k u)_\lambda $ for any  $ k\in\Z $ and $ \lambda\in\N $.
	\end{itemize}
\end{rem}

\subsection{Calderon-Zygmund estimates}

We first recall the usual Calderon-Zygmund inequality in the following form.
\begin{rem}[Classical Carderon-Zygmund inequality]
	Let $ p\in (1,\infty) $. There is a constant $ C_{d,p}$ such that for any smooth compactly supported function $ f $ the following inequality holds:
	\begin{equation}\label{eq:classic-C-Z}
	\|f\|_{W^{2,p}(\R^d)} \le C_{d,p} \|\Delta f\|_{L^p(\R^d)}.
	\end{equation}
We refer to \cite{gilbart01} for the proof. 
\end{rem}

It is now a small step to show that the same statement can be transferred to the periodic setting: we include the proof for completeness. 

\begin{lem}[Calderon-Zygmund on the flat torus]
	Let $ p\in(1,\infty) $. There is a constant $ C_{d,p}$ such that for any $ f\in C_0^\infty(\Td) $ the following inequality holds:
	\begin{equation}\label{eq:C-Z-torus}
	\|f\|_{W^{2,p}(\Td)} \le C_{d,p} \|\Delta f\|_{L^p(\Td)}.
	\end{equation}
	\label{lem:C-Z-classic}
\end{lem}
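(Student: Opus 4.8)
The plan is to localize, transfer the problem to $\R^d$ where the classical Calderón--Zygmund inequality \eqref{eq:classic-C-Z} is available, and then remove the resulting lower-order error terms by exploiting the zero mean hypothesis.

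\emph{Localization.} First I would fix a cutoff $\varphi\in C_c^\infty(\R^d)$ with $\varphi\equiv 1$ on $[0,1]^d$ and $\operatorname{supp}\varphi\subset(-1,2)^d$, set $K:=[-1,2]^d$, and regard $f\in C_0^\infty(\Td)$ as a $\Z^d$-periodic smooth function on $\R^d$. Then $g:=\varphi f\in C_c^\infty(\R^d)$, so \eqref{eq:classic-C-Z} gives $\|g\|_{W^{2,p}(\R^d)}\le C_{d,p}\|\Delta g\|_{L^p(\R^d)}$. Expanding $\Delta g=\varphi\,\Delta f+2\nabla\varphi\cdot\nabla f+(\Delta\varphi)f$, noting the integrand is supported in $K$, and using that $K$ is (up to a null set) the union of $3^d$ integer translates of $[0,1)^d$ together with the periodicity of $f$, I would obtain
\[ \|\Delta g\|_{L^p(\R^d)} \le C\big(\|\Delta f\|_{L^p(K)}+\|\nabla f\|_{L^p(K)}+\|f\|_{L^p(K)}\big) \le C'\big(\|\Delta f\|_{L^p(\Td)}+\|\nabla f\|_{L^p(\Td)}+\|f\|_{L^p(\Td)}\big). \]
Since $g\equiv f$ on $[0,1]^d$ and $W^{2,p}(\Td)$ norms are computed on the fundamental domain, $\|f\|_{W^{2,p}(\Td)}\le\|g\|_{W^{2,p}(\R^d)}$. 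Chaining these inequalities yields the preliminary bound
\begin{equation}
\|f\|_{W^{2,p}(\Td)} \le C_1\big(\|\Delta f\|_{L^p(\Td)}+\|\nabla f\|_{L^p(\Td)}+\|f\|_{L^p(\Td)}\big),
\label{eq:prelim-CZ}
\end{equation}
valid for every smooth periodic $f$ (the zero mean is not needed so far).

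\emph{Removing the lower-order terms.} The second step passes from \eqref{eq:prelim-CZ} to \eqref{eq:C-Z-torus}, and here the zero mean hypothesis is indispensable, since \eqref{eq:C-Z-torus} obviously fails for nonzero constants. I would argue by contradiction: if the lemma were false, there would exist $f_n\in C_0^\infty(\Td)$ with $\|f_n\|_{W^{2,p}(\Td)}=1$ and $\|\Delta f_n\|_{L^p(\Td)}\to 0$. By \eqref{eq:prelim-CZ} the sequence $(f_n)$ is bounded in $W^{2,p}(\Td)$, so by the Rellich--Kondrachov theorem a subsequence converges in $W^{1,p}(\Td)$ to some $f$; in the limit $\Delta f=0$ in the sense of distributions and $\fint_{\Td}f=0$, so (comparing Fourier coefficients, or by the mean value property after mollification) $f$ is constant and therefore $f\equiv 0$. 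Hence $f_n\to 0$ in $W^{1,p}(\Td)$, and feeding this back into \eqref{eq:prelim-CZ} gives $\|f_n\|_{W^{2,p}(\Td)}\le C_1\big(\|\Delta f_n\|_{L^p(\Td)}+\|f_n\|_{W^{1,p}(\Td)}\big)\to 0$, contradicting $\|f_n\|_{W^{2,p}(\Td)}=1$.

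\emph{Main obstacle.} I expect the genuinely delicate point to be this last step rather than the localization. The lower-order terms in \eqref{eq:prelim-CZ} cannot be disposed of by interpolation alone: interpolation controls $\|\nabla f\|_{L^p}$ by $\varepsilon\|D^2f\|_{L^p}+C_\varepsilon\|f\|_{L^p}$ but leaves the term $\|f\|_{L^p}$ with a large constant, so one is forced to invoke the zero mean condition, which the compactness argument above does cleanly. An alternative would be to prove the second-order Poincaré inequality $\|f\|_{L^p(\Td)}\le C\|\Delta f\|_{L^p(\Td)}$ for zero-mean $f$ directly and then combine it with interpolation and \eqref{eq:prelim-CZ}; but that inequality is itself established by the same compactness scheme, so nothing is gained.
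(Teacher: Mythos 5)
Your argument is correct, but it takes a genuinely different route from the paper. The paper also localizes by multiplying with a cutoff, but instead of fixing a small window it uses the large cube $[-N-1,N+1]^d$, sets $f_N(x)=\bigl(\prod_i\chi(|x_i|-N)\bigr)f(x)$, applies \eqref{eq:classic-C-Z} to $f_N$, and then sends $N\to\infty$: the lower-order error terms generated by $\nabla\chi$ and $\Delta\chi$ live only in a boundary shell of volume $O(N^{d-1})$, while the main term scales like $(2N)^d$, so the error terms disappear in the limit with no compactness argument and, incidentally, with the torus constant equal to the full-space constant. Your compactness route is shorter and more standard but gives a non-explicit constant; on the other hand it makes visible where the zero-mean hypothesis enters (in the uniqueness of the harmonic limit $f$), whereas in the paper's $N\to\infty$ argument the zero-mean assumption never appears. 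This is not an accident: what the growing-cutoff argument actually establishes, if one reads the classical Calder\'on--Zygmund estimate as the homogeneous bound $\|D^2 g\|_{L^p(\R^d)}\le C_{d,p}\|\Delta g\|_{L^p(\R^d)}$ (which is the version that holds uniformly in the size of the support; with the full $W^{2,p}$ norm \eqref{eq:classic-C-Z} cannot hold for all compactly supported $g$ with a fixed constant, as one sees by dilating $g(\cdot/\lambda)$), is only the seminorm estimate $\|D^2 f\|_{L^p(\Td)}\le C_{d,p}\|\Delta f\|_{L^p(\Td)}$; passing to the full $W^{2,p}$ norm then needs exactly the kind of zero-mean/Poincar\'e input that your Rellich--Kondrachov step supplies. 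One small remark on your own write-up: you also cite \eqref{eq:classic-C-Z} in the full-norm form, but since your decomposition of $\Delta(\varphi f)$ already puts $\|\nabla f\|_{L^p}$ and $\|f\|_{L^p}$ on the right-hand side of \eqref{eq:prelim-CZ}, only the homogeneous Calder\'on--Zygmund estimate is needed for your preliminary bound, and your argument goes through unchanged.
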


\begin{proof}
	Let $ f\in C_0^\infty(\Td) $ and $ N\in\N $. We treat $f$ as a periodic map $f: \R^d \to \R$ and identify $\Td$ with the unit cube $(0,1)^d$. Choose a smooth cut-off function $ \chi\in C^\infty(\R) $ such that $ \chi(x)=1 $ if $ x\le0 $ and $ \chi(x)=0 $ if $ x\ge1 $. Define the function $ f_N\in C_c^\infty(\R^d) $ by
	\[ f_N(x) \coloneqq \left(\prod_{i=1}^d \chi(|x_i|-N)\right) f(x). \]
	Now the classical Calderon-Zygmund inequality \eqref{eq:classic-C-Z} and the fact that $ f_N $ is supported in the cube $ [-N-1,N+1]^d $ yield
	\[ \|f_N\|_{W^{2,p}([-N,N]^d)} \le \|f_N\|_{W^{2,p}(\R^d)} \le C_{d,p} \|\Delta f_N\|_{L^p(\R^d)} = C_{d,p} \|\Delta f_N\|_{L^p([-N-1,N+1]^d)} \]
	and therefore, using that $ \|\chi\|_{C^0}=1 $ and $ f_N = f $ on $ [-N, N]^d $. 
	\begin{align*}
		(2N)^d \|f\|_{W^{2,p}(\Td)} \le& C_{d,p} \left[ (2N+2)^d \|\Delta f\|_{L^p(\Td)} \right.\\
		&\left. + \left((2N+2)^d-(2N)^d\right) \left(\|\chi'\|_{C^0}\|\nabla f\|_{L^p(\Td)} + \|\chi''\|_{C^0}\|f\|_{L^p(\Td)} \right)\right].
	\end{align*}
	If $ N\to\infty $ the dominating terms are the ones with the factor $ (2N)^d $, and so
	\[ \|f\|_{W^{2,p}(\Td)} \le C_{d,p}\|\Delta f\|_{L^p}(\Td) \]
	holds with the same constant as in the full space setting.
\end{proof}

\begin{lem}[Estimates on antiderivatives]
	Let $ p\in(1,\infty) $ and $ k\in\N $. There is a constant $ C_{d,p,k}$ such that
	\begin{equation}\label{eq:antiderivative-est}
	\|\D^{-k} f\|_{W^{k,p}(\Td)} \le C_{d,p,k} \|f\|_{L^p(\Td)}
	\end{equation}
	holds for any $ f\in C_0^\infty(\Td) $.
	\label{lem:antiderivative-est}
\end{lem}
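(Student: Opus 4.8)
The plan is to reduce the estimate \eqref{eq:antiderivative-est} to the Calderon--Zygmund inequality on the torus, Lemma~\ref{lem:C-Z-classic}, by treating the cases $k$ even and $k$ odd separately and then iterating. First I would record the base cases. If $k=2j$ is even, then $\D^{-k}f = \Delta^{-j}f$, and applying Lemma~\ref{lem:C-Z-classic} $j$ times gives
\[
\|\Delta^{-j}f\|_{W^{2j,p}} \le C_{d,p} \|\Delta \Delta^{-j}f\|_{W^{2j-2,p}} = C_{d,p}\|\Delta^{-(j-1)}f\|_{W^{2j-2,p}} \le \dots \le C_{d,p}^{\,j}\|f\|_{L^p},
\]
where at each step one uses that $\Delta^{-j}f \in C_0^\infty$ so that Lemma~\ref{lem:C-Z-classic} applies, and that $\|\cdot\|_{W^{2\ell,p}} \le C\|\Delta(\cdot)\|_{W^{2\ell-2,p}}$ follows from \eqref{eq:C-Z-torus} applied to each derivative $\partial^\alpha$ of order $\le 2\ell-2$ together with the commutation $\partial^\alpha \Delta^{-1} = \Delta^{-1}\partial^\alpha$ on $C_0^\infty$.

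For $k$ odd, write $k = 2j+1$, so that $\D^{-k}f = \nabla \Delta^{-(j+1)} f$. Here I would first use the even case to bound $\|\Delta^{-(j+1)}f\|_{W^{2j+2,p}} \le C\|f\|_{L^p}$, and then observe that $\|\nabla g\|_{W^{k,p}} \le \|g\|_{W^{k+1,p}}$ for any $g$, applied with $g = \Delta^{-(j+1)}f$ and $k+1 = 2j+2$. This yields
\[
\|\D^{-k}f\|_{W^{k,p}} = \|\nabla \Delta^{-(j+1)}f\|_{W^{2j+1,p}} \le \|\Delta^{-(j+1)}f\|_{W^{2j+2,p}} \le C_{d,p,k}\|f\|_{L^p},
\]
which is the claim. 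The constant $C_{d,p,k}$ depends only on $d$, $p$ and $k$ since it is a finite product of the Calderon--Zygmund constant $C_{d,p}$ from Lemma~\ref{lem:C-Z-classic}.

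The only mild subtlety — and the step I would be most careful about — is the bookkeeping that keeps every intermediate function in $C_0^\infty(\Td)$ so that repeated application of Lemma~\ref{lem:C-Z-classic} and the operators $\Delta^{-1}$ are legitimate: this is immediate because $\Delta^{-1}$ maps $C_0^\infty$ to $C_0^\infty$ by definition and derivatives of zero-mean functions have zero mean, but it must be stated. There is genuinely no hard analytic obstacle here; the content is entirely contained in the periodic Calderon--Zygmund estimate already proved, and the lemma is just its iteration packaged in terms of the operators $\D^{-k}$. One could alternatively phrase the whole argument as a single induction on $k$, peeling off one factor of $\nabla$ or one $\Delta^{-1}$ at a time, but the even/odd split is cleaner to write.
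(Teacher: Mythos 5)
Your proof is correct and takes essentially the same route as the paper: iterate the periodic Calderon--Zygmund estimate (Lemma~\ref{lem:C-Z-classic}) for even $k$, and handle odd $k$ by the trivial bound $\|\nabla g\|_{W^{m,p}} \le \|g\|_{W^{m+1,p}}$ together with one additional application of Calderon--Zygmund. The only cosmetic difference is that the paper first reduces $\|\D^{-k}f\|_{W^{k,p}}$ to $\|\D^{-1}f\|_{W^{1,p}}$ and then finishes, whereas you bound $\|\Delta^{-(j+1)}f\|_{W^{2j+2,p}}$ in one go and then drop the gradient; the number of Calderon--Zygmund applications and the resulting constant $C_{d,p}^{\lceil k/2\rceil}$ are identical, and your explicit remark on why the higher-order form $\|g\|_{W^{2\ell,p}} \le C\|\Delta g\|_{W^{2\ell-2,p}}$ follows from \eqref{eq:C-Z-torus} (applying it to $\partial^\alpha g$ and commuting) is a detail the paper leaves implicit.
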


\begin{proof}
	If $ k $ is even, the inequality arises simply from iterated application of the Calderon-Zygmund inequality on the torus:
	\[ \|\D^{-k}f\|_{W^{k,p}} = \|\Delta^{-k/2} f\|_{W^{k,p}} \le C_{d,p} \|\Delta^{-k/2+1}f\|_{W^{k-2,p}} \le \ldots \le C_{d,p}^{k/2} \|f\|_{L^p}. \]
	For odd numbers $ k $ observe that the same iteration leaves us with
	\[ \|\D^{-k}f\|_{W^{k,p}} \le C_{d,p}^{(k-1)/2} \|\D^{-1}f\|_{W^{1,p}} = C_{d,p}^{(k-1)/2} \|\nabla\Delta^{-1}f\|_{W^{1,p}} \]
	and clearly
	\[ \|\nabla\Delta^{-1}f\|_{W^{1,p}} \le \|\Delta^{-1}f\|_{W^{2,p}} \le C_{d,p} \|f\|_{L^p} \]
	so the stated inequality holds with $ C_{d,p,k} = C_{p,d}^{\lceil  k/2 \rceil} $.
\end{proof}

\begin{lem}[End point estimates on antiderivatives]
	Let $ p\in[1,\infty] $ and $ k\in\N_+ $. There is a constant $ C_{d,p,k} $ such that
	\begin{equation}\label{eq:antiderivative-end}
	\|\D^{-k} f\|_{W^{k-1,p}(\Td)} \le C_{d,p,k}\|f\|_{L^p(\Td)}
	\end{equation}
	holds for any $ f\in C_0^\infty(\Td) $.
	\label{lem:antiderivative-end}
\end{lem}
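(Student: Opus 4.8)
The plan is to establish \eqref{eq:antiderivative-end} for \emph{all} $p\in[1,\infty]$ at once, by showing that each operator $\partial^\alpha\D^{-k}$ with $|\alpha|\le k-1$ acts, on mean-zero functions, as convolution on $\Td$ with a kernel $K_{k,\alpha}$ lying in $L^1(\Td)$. Granting this, Young's convolution inequality gives $\|\partial^\alpha\D^{-k}f\|_{L^p}\le\|K_{k,\alpha}\|_{L^1}\|f\|_{L^p}$ for every $p\in[1,\infty]$, and summing over the finitely many multi-indices with $|\alpha|\le k-1$ yields the estimate. (For $p\in(1,\infty)$ one does not even need this: since $\|\cdot\|_{W^{k-1,p}}\le\|\cdot\|_{W^{k,p}}$, the bound is immediate from Lemma~\ref{lem:antiderivative-est}. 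The actual content of the lemma lies at the endpoints $p=1$ and $p=\infty$, where the Calder\'on--Zygmund inequality \eqref{eq:C-Z-torus} fails.)

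To identify the kernels, note that $\partial^\alpha\D^{-k}$ commutes with translations, because $\D^{-k}$ commutes with derivatives and $\Delta^{-1}$ is translation invariant on $C_0^\infty(\Td)$; hence on mean-zero functions it acts as convolution with $K_{k,\alpha}:=\partial^\alpha K_k$, where $K_k:=\D^{-k}(\delta_0-1)$ and $\delta_0-1$ denotes the periodic Dirac mass with its mean subtracted (so that $\D^{-k}$ may be applied to it). Up to one derivative when $k$ is odd, $K_k$ is the $\lceil k/2\rceil$-fold iterated periodic Newtonian potential. Since $\delta_0-1$ is smooth on $\Td\setminus\{0\}$, interior elliptic regularity gives $K_k\in C^\infty(\Td\setminus\{0\})$; and a localization near the origin --- comparing $K_k$ with the Euclidean fundamental solution of the corresponding power of the Laplacian, in the spirit of the proof of Lemma~\ref{lem:C-Z-classic} --- shows that $K_k$ has there the same leading singularity as in $\R^d$, i.e.\ $|K_k(x)|\lesssim|x|^{k-d}$ near $x=0$ (with a harmless $\log|x|$ factor in the exceptional cases). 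Differentiating $|\alpha|$ times, $K_{k,\alpha}\in C^\infty(\Td\setminus\{0\})$ and $|K_{k,\alpha}(x)|\lesssim|x|^{k-|\alpha|-d}$ near the origin. As $|\alpha|\le k-1$, the exponent satisfies $k-|\alpha|-d\ge 1-d>-d$, so the singularity is locally integrable; since $\Td$ is compact and $K_{k,\alpha}$ is smooth elsewhere, $K_{k,\alpha}\in L^1(\Td)$, as required.

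The only delicate point is thus the local structure of the iterated periodic Green's function near the origin, in particular the absence of any principal-value or Dirac contribution. This is precisely where using $W^{k-1,p}$ rather than $W^{k,p}$ is essential: for $|\alpha|=k$ the kernel $\partial^\alpha K_k$ is homogeneous of order $-d$ at the origin, i.e.\ a genuine Calder\'on--Zygmund kernel, which is not integrable and is not bounded on $L^1$ or $L^\infty$ --- consistently with the fact that Lemma~\ref{lem:antiderivative-est} needs $p\in(1,\infty)$. Dropping one derivative raises the order of the singularity to $-d+1$ or better, which is integrable and makes the soft Young-inequality argument work uniformly in $p$.
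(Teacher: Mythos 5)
Your proposal is correct in outline but follows a genuinely different route from the paper. The paper's proof is entirely elementary: for $p\in(1,\infty)$ it simply invokes Lemma~\ref{lem:antiderivative-est} (as you also observe); for $p=\infty$ it uses the Sobolev (Morrey) embedding $\|\partial^\alpha g\|_{L^\infty}\le C_d\|D\partial^\alpha g\|_{L^{d+1}}$ together with Lemma~\ref{lem:antiderivative-est} at the exponent $d+1$; and for $p=1$ it uses the dual characterization of the $L^1$-norm over mean-zero test functions, transfers $\partial^\alpha\D^{-k}$ onto the test function by partial integration, and then applies the $p=\infty$ case just established. In contrast, you represent $\partial^\alpha\D^{-k}$ as convolution with the kernel $\partial^\alpha\D^{-k}(\delta_0-1)$, show via comparison with the Euclidean fundamental solution that for $|\alpha|\le k-1$ this kernel has an integrable singularity of order $|x|^{k-|\alpha|-d}$ at the origin, and conclude by Young's inequality uniformly in $p$. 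Both arguments are valid; what each buys is as follows. Your kernel argument gives all $p\in[1,\infty]$ in a single stroke without invoking Calder\'on--Zygmund even for the interior range, and it explains cleanly why the $W^{k-1,p}$ bound is endpoint-stable while the $W^{k,p}$ bound is not (the kernel acquires a non-integrable $|x|^{-d}$ homogeneity precisely when $|\alpha|=k$). On the other hand, it rests on facts about periodic Green's functions of powers of the Laplacian --- smoothness away from the origin, local comparison with the Euclidean fundamental solution, and the log-corrected homogeneous expansion in exceptional dimensions --- that you state but do not prove, whereas the paper deliberately keeps its toolkit self-contained and physical-space-oriented (Sobolev embedding, duality, and the already-proved torus Calder\'on--Zygmund inequality), consistent with the authors' stated goal of avoiding multiplier-type arguments. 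Your sketch also needs a word on why the distributional kernel is actually a function (no singular principal-value or $\delta$-part) for $|\alpha|\le k-1$; you identify this as ``the only delicate point,'' and indeed it is the step that would require the most work to make rigorous.
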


\begin{proof}
	In the case $ p\in(1,\infty) $ there is nothing to show as the statement is just a weaker form of \eqref{eq:antiderivative-est}.
	
	For $ p=\infty $ we use Sobolev embeddings on every derivative of order $ k-1 $ and smaller to control the Sobolev norm of a smooth function $ g $: for every multiindex $\alpha$, with $|\alpha| \leq k-1$, 
	\[
		\|\partial^\alpha g\|_{L^\infty} \le C_{d} \|D\partial^\alpha g\|_{L^{d+1}} 
		\implies \|g\|_{W^{k-1,\infty}} \le C_{d} \|g\|_{W^{k,d+1}}.
	\]
	If we set $ g=\D^{-k}f $ and we use the previous Lemma, we obtain
	\[ \|\D^{-k}f\|_{W^{k-1,\infty}} \le C_{d} \|\D^{-k}f\|_{W^{k,d+1}} \le C_{d,p,k} \|f\|_{L^{d+1}} \le C_{d,p,k} \|f\|_{L^\infty}. \]
	
	For $ p=1 $ we consider the dual characterisation of the $ L^1 $-norm:
		\begin{align*}
			\|g\|_{L^1} &= \max\left\lbrace \frac{1}{\|\phi\|_{L^\infty}} \int_{\Td}g\phi : \phi\in L^\infty(\Td) \setminus\{0\} \right\rbrace \\
			&= \sup\left\lbrace \frac{1}{\|\phi\|_{L^\infty}} \int_{\Td}g\phi : \phi\in C^\infty(\Td) \setminus\{0\} \right\rbrace.
		\end{align*}
		If $ \fint_{\Td}g =0$ we can restrict the definition to test functions in $ C_0^\infty(\Td) $, still obtaining the inequalities
		\begin{equation}
		\frac{1}{2}\|g\|_{L^1} \le  \sup\left\lbrace \frac{1}{\|\phi\|_{L^\infty}} \int_{\Td}g\phi : \phi\in C_0^\infty(\Td) \setminus\{0\} \right\rbrace \le \|g\|_{L^1}
		\end{equation}
		where the first inequality comes from the fact that $\int g(\phi-\fint\phi) = \int g\phi$ and \linebreak[4]$ \|\phi-\fint\phi\|_{L^\infty} \le 2 \|\phi\|_{L^\infty} $ hold for any $ \phi $.  Using this, we can estimate for any multiindex $ \alpha $ of order $ k-1 $ or smaller
		\begin{align*}
			\|\partial^\alpha \D^{-k}f\|_{L^1} &\le \sup_{\phi\in C_0^\infty(\Td)} \frac{2}{\|\phi\|_{L^\infty}} \int_{\Td} \left(\partial^\alpha \D^{-k}f\right) \phi \\
			&= \sup_{\phi\in C_0^\infty(\Td)} \frac{2}{\|\phi\|_{L^\infty}} \int_{\Td} f \left(\partial^\alpha \D^{-k} \phi\right) \\
			&\le \sup_{\phi\in C_0^\infty(\Td)} \frac{2}{\|\phi\|_{L^\infty}} \|f\|_{L^1} \|\partial^\alpha \D^{-k} \phi\|_{L^\infty} \\
			&\le \|f\|_{L^1} \sup_{\phi\in C_0^\infty(\Td)} \frac{C_{d,\infty, k}}{\|\phi\|_{L^\infty}}  \|\phi\|_{L^\infty} \\
			&= C_{d,\infty,k} \|f\|_{L^1}
		\end{align*}
		where in the last inequality \eqref{eq:antiderivative-end} with $ p=\infty $ was applied. Summation over all such $ \alpha $ then yields \eqref{eq:antiderivative-end}:
		\[ \|\D^{-k}f\|_{W^{k-1,1}} = \sum_{\mathclap{|\alpha|\le k-1}} \|\partial^\alpha \D^{-k}f\|_{L^1} \le \sum_{\mathclap{|\alpha|\le k-1}} C_{d,\infty,k} \|f\|_{L^1} = C_{d,1,k} \|f\|_{L^1}. \qedhere \]
\end{proof}

\subsection{Improved antidivergence for fast oscillations}
\label{subsec:adiv}

	The first order antiderivative $ \D^{-1} $ is an antidivergence operator, which we will call standard antidivergence operator. It will be used in situations when the estimate provided in Lemma~\ref{lem:antiderivative-end} with $ k=1 $ suffices.
However, in many steps of the proof of Proposition~\ref{prop:main} refined estimates on the anitdivergence are necessary. We therefore introduce a bilinear operator which is apt to control the anitdivergence of a product of functions if one of them is fastly oscillating.

\begin{defi}[Bilinear antidivergence operator]
	Let $ N\in\N $. Define the operator
	\begin{equation}\label{eq:def-antidiv}
	\begin{gathered}
	\adiv_N : C^\infty(\Td) \times C_0^\infty(\Td) \to C^\infty(\Td;\R^d) \\
	\adiv_N \left(f,g\right) \coloneqq \sum_{k=0}^{N-1} (-1)^k \D^k f \D^{-k-1} g + \D^{-1} \left( (-1)^N \D^N f \cdot\D^{-N} g - \fint_{\Td}fg\right).
	\end{gathered}
	\end{equation}
	Here the `$ \cdot $' indicates the scalar product if needed, i.e.~if $ N $ is odd, and the standard product otherwise.
	Note that both arguments must be smooth but only the second argument $ g $ is supposed to have zero mean value.
\end{defi}

\begin{lem}[Properties of $ \adiv_N $]
	\label{lem:adiv-properties}
	Let $ N\in\N $, $ f\in C^\infty(\Td) $ and $ g\in C_0^\infty(\Td) $.
	\begin{enumerate}[(i)]
		\item $ \adiv_N $ is an anitdivergence operator in the sense that
		\[ \div \left(\adiv_N (f,g)\right) = fg-\fint_{\Td}fg. \]
		\item $ \adiv_N $ satisfies the Leibniz rule:
		\[ \partial_j \left(\adiv_N(f,g)\right) = \adiv_N(\partial_jf,g) + \adiv_N(f,\partial_jg). \]
		\item If $ p,r,s\in[1,\infty] $ such that $ \frac{1}{p}=\frac{1}{r}+\frac{1}{s} $, then the following inequality holds:
		\begin{equation}\label{eq:adiv-raw-est}
		\left\| \adiv_N(f,g) \right\|_{L^p} \le \sum_{k=0}^{N-1} \|\D^kf\|_{L^r} \|\D^{-k-1}g\|_{L^s} + C_{d,p}\|\D^Nf\|_{L^r} \|\D^{-N}g\|_{L^s}.
		\end{equation}
	\end{enumerate}
\end{lem}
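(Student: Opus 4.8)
The plan is to verify the three properties directly from the definition
\[
\adiv_N(f,g) = \sum_{k=0}^{N-1} (-1)^k \D^k f\, \D^{-k-1} g + \D^{-1}\!\left( (-1)^N \D^N f\cdot \D^{-N} g - \fint_{\Td} fg\right),
\]
using only the three basic properties of $\D^k$ listed after the definition of that operator: that $\D^k$ commutes with $\partial^\alpha$, the partial-integration identity, and the scaling rule. For property (iii), I will also use the previously established antiderivative estimate, Lemma~\ref{lem:antiderivative-end}, to control $\|\D^{-1}(\cdot)\|_{L^p}$ by the $L^p$-norm of its argument.

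First I would prove (i). Since $\div\D^{-1}h = \div\nabla\Delta^{-1}h = h$ for any $h\in C_0^\infty$, and the argument of $\D^{-1}$ in the definition has zero mean by construction, the second summand contributes exactly $(-1)^N \D^N f\cdot \D^{-N} g - \fint_{\Td} fg$. For the first sum I would compute the divergence of each term $\div\big((-1)^k \D^k f\, \D^{-k-1} g\big)$. When $k$ is even, $\D^k f$ is a scalar and $\D^{-k-1}g = \nabla\Delta^{-(k+1)/2}\cdots$ is a vector, and $\div(\D^k f\, \D^{-k-1}g) = \nabla\D^k f\cdot \D^{-k-1}g + \D^k f\,\div\D^{-k-1}g = \D^{k+1}f\cdot\D^{-k-1}g + \D^k f\, \D^{-k}g$; an analogous computation handles odd $k$. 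Summing $(-1)^k$ times these over $k=0,\dots,N-1$ telescopes: the term $\D^{k+1}f\cdot\D^{-k-1}g$ at index $k$ cancels against $\D^{k}f\cdot\D^{-k}g$ at index $k+1$ up to the sign $(-1)^k$, leaving $f g$ (from $k=0$, the $\D^0 f\,\D^0 g$ term) minus $(-1)^{N}\D^N f\cdot\D^{-N}g$ (the leftover top term), which precisely cancels the matching term produced by the $\D^{-1}$ summand. The net result is $fg - \fint_{\Td} fg$, as claimed.

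Next, property (ii), the Leibniz rule, follows termwise. In the sum, $\partial_j(\D^k f\,\D^{-k-1}g) = (\partial_j\D^k f)\D^{-k-1}g + \D^k f(\partial_j \D^{-k-1}g) = \D^k(\partial_j f)\D^{-k-1}g + \D^k f\,\D^{-k-1}(\partial_j g)$ using commutativity of $\D^k$ with $\partial_j$; so the sum splits into the corresponding sums for $\adiv_N(\partial_j f,g)$ and $\adiv_N(f,\partial_j g)$. For the remainder term, $\partial_j$ commutes with $\D^{-1}$ and with each $\D^{\pm N}$, and $\partial_j\fint_{\Td}fg = 0$ since the mean is a constant; also $\fint_{\Td}\partial_j(fg) = 0$ so the argument of $\D^{-1}$ stays in $C_0^\infty$ after differentiating. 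Distributing $\partial_j$ across the product inside gives exactly the remainder terms of $\adiv_N(\partial_j f,g)$ and $\adiv_N(f,\partial_j g)$, completing (ii). Finally, (iii) is immediate: apply the triangle inequality to the defining formula, bound each summand $\|\D^k f\,\D^{-k-1}g\|_{L^p}\le \|\D^k f\|_{L^r}\|\D^{-k-1}g\|_{L^s}$ by H\"older with $\tfrac1p=\tfrac1r+\tfrac1s$, and for the last term use $\|\D^{-1}(h)\|_{L^p}\le \|\D^{-1}(h)\|_{W^{0,p}}\le C_{d,p}\|h\|_{L^p}$ from Lemma~\ref{lem:antiderivative-end} with $k=1$ (the mean-zero hypothesis there is satisfied), then H\"older again on $h = (-1)^N\D^N f\cdot\D^{-N}g - \fint fg$, absorbing the constant-mean correction into the constant. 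I do not anticipate a genuine obstacle here; the only point requiring care is bookkeeping the signs and the scalar-versus-vector status of $\D^k f$ and $\D^{-k-1}g$ in the telescoping computation for (i), where the parity of $k$ (and of $N$) determines where the scalar product sits.
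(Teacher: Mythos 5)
Your proof is correct and follows essentially the same route as the paper. The one stylistic difference is in (i): the paper proceeds by induction on $N$, reducing $\adiv_N$ to $\adiv_{N-1}$ and checking that the extra terms cancel, whereas you unroll that induction into a direct telescoping computation of $\sum_{k=0}^{N-1}(-1)^k\div(\D^kf\,\D^{-k-1}g)$. These are the same cancellation seen from two angles; the telescoping version is arguably cleaner since the induction hypothesis is never really used except as bookkeeping. Parts (ii) and (iii) match the paper's (terse) arguments exactly.

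One small point worth making explicit in both (i) and (iii), which you and the paper both leave implicit: the fact that $\fint_{\Td}fg = (-1)^N\fint_{\Td}\D^Nf\cdot\D^{-N}g$, which follows from the partial-integration identity for $\D^k$. In (i) this is what guarantees the argument of $\D^{-1}$ actually has zero mean (so that $\D^{-1}$ is well defined and $\div\D^{-1}$ recovers it); and in (iii) it is exactly what licenses ``absorbing the constant-mean correction into the constant,'' since it lets you rewrite the argument of $\D^{-1}$ as $(-1)^N\bigl(\D^Nf\cdot\D^{-N}g-\fint\D^Nf\cdot\D^{-N}g\bigr)$ and then apply $\|h-\fint h\|_{L^p}\le 2\|h\|_{L^p}$ before H\"older. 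Without that identity, one cannot dominate $|\fint fg|$ by $\|\D^Nf\|_{L^r}\|\D^{-N}g\|_{L^s}$. It is a one-line observation, but it is the hinge on which the last term of the estimate turns.
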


\begin{proof}
	(i) By induction in $ N $. By definition we have
	\[ \adiv_0 (f,g) = \D^{-1} \left(fg-\fint_{\Td}fg\right) \]
	so the statement follows from the remark on standard antidivergence.
	Now let $ N>0 $ and w.l.o.g assume $ N $ to be even, then
	\begin{align*}
		\div \left(\adiv_N (f,g)\right) - \left(fg-\fint_{\Td}fg\right) &=
		\overbrace{-\left(fg-\fint_{\Td}fg\right) + \div \left(\adiv_{N-1} (f,g)\right)}^{=0 \text{ by assumption}} \\
		&\quad-(-1)^{N-1} \D^{N-1}f\cdot\D^{-N+1}g + (-1)^N \D^Nf \D^{-N}g \\
		&\quad+ \div \left((-1)^{N-1} \D^{N-1}f\D^{-N}g\right) \\
		&= \D^{N-1}f\cdot\D^{-N+1}g + \D^Nf \D^{-N}g \\
		&\quad- \div\left(\D^{N-1}f\right)\D^{-N}g - \D^{N-1}f \cdot \nabla\D^{-N}g \\
		&= 0
	\end{align*}
	by definition of the operators $ \D^k $.
	
	(ii) is proven by lengthy but straightforward computation which we omit here.
	
	(iii) Use the standard Hölder inequality on each term of the definition of $ \adiv_N $. For the last summand note that Lemma~\ref{lem:antiderivative-end} in particular implies $ \|\D^{-1}h\|_{L^p} \le C(d,p) \|h\|_{L^p} $; furthermore $ \|h-\fint_{\Td}h\|_{L^p} \le 2\|h\|_{L^p}$ for any $ p $.
\end{proof}

\begin{rem}
	The bilinear antidivergence and inequality \eqref{eq:adiv-raw-est} are only useful if applied on fuctions $ g_\lambda $ which are fast oscillating, as then we gain the oscillation parameter $ \lambda $ as small factor. In particular the following two estimates will be used throughout the paper.
	Let $ p\in[1,\infty] $, $ \lambda,N\in\N $, $ f\in C^\infty(\Td) $ and $ g\in C_0^\infty(\Td) $. Then:
	\begin{align}
		\left\| \adiv_N(f,g_\lambda) \right\|_{L^p} &\le C_{d,p,N} \|g\|_{L^p} \left( \sum_{k=0}^{N-1} \lambda^{-k-1} \|\D^k f\|_{L^\infty} + \lambda^{-N} \|\D^Nf\|_{L^\infty}\right),
		\label{eq:antidiv-in-infty} \\
		\left\| \adiv_N(f,g_\lambda) \right\|_{L^p} &\le C_{d,p,N} \|g\|_{L^\infty} \left( \sum_{k=0}^{N-1} \lambda^{-k-1} \|\D^k f\|_{L^p} + \lambda^{-N} \|\D^Nf\|_{L^p} \right).
		\label{eq:antidiv-in-p}
	\end{align}
	The proof of \eqref{eq:antidiv-in-infty}-\eqref{eq:antidiv-in-p} is direct consequence of \eqref{eq:adiv-raw-est} and Lemma~\ref{lem:antiderivative-end}.
\end{rem}

\section{The perturbations}
\label{sec:perturbation}

In this section we introduce the basic building blocks of our construction, namely the \emph{space-time Mikado densities and field}, which allow us to get a ``full dimensional concentration'', i.e. to assume \eqref{eq:exponents-main-condition} instead of \eqref{eq:exponents-ms}. We then use the Mikado functions to define and estimate  $\rho_1, u_1$. 

\subsection{Space-time Mikado densities and fields}
\label{subsec:mikado}
%In this section we introduce the basic building blocks of our construction, namely the \emph{space-time Mikado densities and field}, which allow us to get a ``full dimensional concentration'', i.e. to assume \eqref{eq:exponents-main-condition} instead of \eqref{eq:exponents-ms}. 
%
%
%
%
% ``Mikado flows'' from \cite{daneri-szekelyhidi17} as the have beein used in \cite{modena-szekelyhidi17,modena-szekelyhidi18}: with an additional concentration process resembling the intermittency introduced in \cite{buckmaster-vicol17}. However, this would not be sufficient without the new idea from \cite{buckmaster-colombo-vicol18}: superposing the stationary Mikado with a time dependent cutoff, which makes them Mikados $ d+1 $-dimensional space-time.

For given $\zeta, v \in \Td$, consider the line on $\Td$
\begin{equation*}
\R \ni s \mapsto \zeta + s v \in \Td.
\end{equation*}

\begin{lem}[Space-time Mikado lines]
\label{lem:mikado-lines}
There exist $r>0$ and $\zeta_1, \dots, \zeta_d \in \Td$ such that the lines
	\[ \mathtt x_j: \R \to\Td, \ \mathtt x_j(s) = \zeta_j + s e_j \]
satisfy
\begin{equation}
\label{eq:lines-disjoint}
d_{\Td}(\mathtt x_i(s), \mathtt x_j(s)) > 2r \quad \forall s\in \R, \  \forall i\ne j,
\end{equation}
	where $ d_{\Td} $ denotes the Euclidian distance on the torus.
\end{lem}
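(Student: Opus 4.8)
The plan is to place the $d$ lines along the coordinate directions $e_1, \dots, e_d$ with base points $\zeta_j$ chosen so that, at every time $s$, the projections to the torus stay uniformly separated. The crucial observation is that for $i \neq j$, the difference $\mathtt x_i(s) - \mathtt x_j(s) = (\zeta_i - \zeta_j) + s(e_i - e_j)$ is an affine function of $s$ whose direction $e_i - e_j$ has \emph{irrational slope components} relative to some coordinates — in fact $e_i - e_j$ has a nonzero entry in both the $i$-th and $j$-th slot, with equal and opposite values. So as $s$ ranges over $\R$, the point $\mathtt x_i(s) - \mathtt x_j(s)$ moves along a line in $\Td$ that is \emph{not} closed; its closure is a subtorus of dimension at least one, and in particular does not pass arbitrarily close to $0$ unless $\zeta_i - \zeta_j$ lies in that closure. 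Hence the strategy: choose the $\zeta_j$ so that none of the $\binom{d}{2}$ affine lines $s \mapsto (\zeta_i - \zeta_j) + s(e_i-e_j)$ meets a small ball around the origin of $\Td$.

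The key steps, in order. First, reduce to a single pairwise condition: it suffices to find $\zeta_1, \dots, \zeta_d$ and $r > 0$ so that for every $i \neq j$,
\[
\inf_{s \in \R} d_{\Td}\bigl((\zeta_i - \zeta_j) + s(e_i - e_j),\, 0\bigr) > 2r.
\]
Second, fix a pair $i \neq j$ and analyze the line $L_{ij}(s) = (\zeta_i - \zeta_j) + s(e_i - e_j)$ on $\Td$. Since the projection of $s(e_i - e_j)$ to the two-dimensional coordinate plane spanned by $e_i, e_j$ already sweeps out a dense line in $\T^2$ (the slope being $-1$, which combined with the torus identification gives a non-closed orbit as $s$ varies continuously — here one uses that $s$ is a \emph{real} parameter, so even a rational-slope line is dense-in-its-closure which is a $1$-subtorus, a closed set of measure zero), the closure $\overline{L_{ij}}$ is a proper closed subset of $\Td$ of measure zero; being a translate of a subtorus, it is in fact a lower-dimensional submanifold. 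Third, observe that the ``bad set'' of base points is $B_{ij} := \{\,(\zeta_i, \zeta_j) : 0 \in \overline{L_{ij}}\,\}$, which is itself a measure-zero subset of $\Td \times \Td$ (it is the preimage of a measure-zero set under the difference map). Taking a finite union over all pairs, the set of bad $(\zeta_1, \dots, \zeta_d)$ has measure zero in $(\Td)^d$, so a generic choice works. Fourth, for such a generic choice, each distance $\inf_s d_{\Td}(L_{ij}(s), 0)$ is strictly positive (the infimum is attained since $\overline{L_{ij}}$ is compact and disjoint from $0$); let $2r$ be less than the minimum of these finitely many positive numbers.

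The main obstacle — really the only subtle point — is making precise the claim that $\overline{L_{ij}}$ is a closed set of measure zero not containing a neighborhood of $0$ for generic $\zeta$. One clean way to avoid any delicate equidistribution argument: choose the $\zeta_j$ with \emph{rationally related} but carefully offset coordinates, e.g. $\zeta_j = \frac{1}{d+1}(j, j, \dots, j)$ or a small perturbation thereof, so that each $L_{ij}(s)$, being a line of rational slope $(e_i - e_j)$, is a \emph{closed} curve in $\Td$ (a circle), and then one only needs to check by an explicit computation that this circle avoids a ball around $0$ — a finite, elementary verification over the $\binom{d}{2}$ pairs. I would present the argument in this explicit form to keep the proof self-contained, noting in passing that the genericity viewpoint above also works. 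Either way the verification is routine once the base points are fixed, and $r$ is then extracted as one-half the (positive) minimal separation.
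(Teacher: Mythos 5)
Your overall strategy — choose explicit base points $\zeta_j$ on a lattice-incommensurate scale and verify directly that the lines never coincide on the torus — is the same strategy the paper uses, but you reach it by a much more circuitous route, and along the way you state several things that are simply false before correcting yourself. Specifically, the direction $e_i - e_j$ is an \emph{integer} vector, so the orbit $s \mapsto s(e_i - e_j)$ in $\Td$ is a \emph{closed} circle of period $1$, not a ``non-closed orbit,'' and it certainly does not ``sweep out a dense line in $\T^2$''; you eventually acknowledge that its closure is a $1$-dimensional subtorus, but the talk of ``irrational slope components'' and dense lines at the start is wrong and distracts from the argument. The genericity/measure-zero argument you sketch is sound as an existence proof (the bad set $\{\zeta_i - \zeta_j \in \overline{\{s(e_i-e_j)\}}\}$ is a measure-zero subtorus translate), but it is overkill, and you then switch to an explicit construction anyway without ever finishing the ``routine verification'' you promise. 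So as written there is a gap: you neither complete the generic argument cleanly nor check the explicit candidate.

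For the record, your candidate $\zeta_j = \frac{j}{d+1}(1,\dots,1)$ does work without any perturbation: if $(\zeta_i-\zeta_j)+s(e_i-e_j)\in\Z^d$, then for $d\geq 3$ any component $k\neq i,j$ gives $\frac{i-j}{d+1}\in\Z$ (impossible since $0<|i-j|\leq d-1$), and for $d=2$ summing the two components gives $\frac{2(i-j)}{3}\in\Z$ (impossible). The paper's choice $\zeta_i = \frac{i}{d}e_i$ is tighter still: if $\mathtt{x}_i(s)=\mathtt{x}_j(s)$, then comparing the $i$-th and $j$-th coordinates forces $\frac{i}{d}+s\in\Z$ and $\frac{j}{d}+s\in\Z$, hence $\frac{i-j}{d}\in\Z$, a contradiction. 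Then $r>0$ exists because $(s,i,j)\mapsto d_{\Td}(\mathtt{x}_i(s),\mathtt{x}_j(s))$ is continuous, $1$-periodic in $s$, and everywhere positive. The lesson is to go straight for a one-line algebraic obstruction rather than invoking orbit closures.
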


\begin{rem}
We can think to the lines $\mathtt x_j$ as the trajectories of $d$ particles moving on the torus with speed $1$ and along different directions. The claim of the Lemma is that such particles have different positions at every time. 
\end{rem}

\begin{proof}
We define
\begin{equation*}
\zeta_i := \frac{i}{d} e_i, \qquad i=1, \dots, d.
\end{equation*}

Let $i \neq j$ be fixed. If, for some  $s \in \R$,
%The flat torus $\Td$ is defined as the quotient $\Td = \R^d / \Z^d$. To stress this fact, in this proof we will use the notation $\zeta$ to denote points in $\R^d$ and $[\zeta]$ to denote points in $\Td$. 
%
%We look for points $[\zeta_i]$, $i =1,\dots, d$, of the form $\zeta_i = (\zeta_i \cdot e_i) e_i$.  Let $i \neq j$. 
%
%
%Assume that there exists $s \in \R$ such that
\begin{equation*}
\mathtt x_i(s) = \mathtt x_j(s) \text{ in } \Td,
\end{equation*}
then
\begin{equation*}
(\zeta_j + s e_j) - (\zeta_i + s e_i)  \in \Z^d
\end{equation*}
and thus
\begin{equation*}
\frac{i}{d} + s \in \Z, \qquad \frac{j}{d} + s \in \Z,
\end{equation*}
which implies, taking the difference,
\begin{equation*}
\frac{i - j}{d} \in \Z,
\end{equation*}
a contradiction. Therefore, for every $s \in \R$ and $i \neq j$, $\mathtt x_i(s) \neq \mathtt x_j(s)$ and thus there must be $r>0$ such that \eqref{eq:lines-disjoint} holds. 
\end{proof}

Let $\varphi$ be a smooth function on $\R^d$, with
\begin{equation*}
\mathrm{supp} \  \varphi \subseteq B(P, r) \subseteq (0,1)^d,
\end{equation*} 
where $P = (1/2, \dots, 1/2) \in (0,1)^d$, and so that $ \varphi$ fulfill
\[
%\int_{\R^{d}} \varphi =0 \ \text{ and }\ 
\int_{\R^d} \varphi^2 = 1. 
\]
%
%
%
%
%
%Introduce the parameters
%\[ \begin{array}{rll}
%\lambda & \text{`slow oscillation',} &\in\N \\
%\mu & \text{`concentration',} &\gg \lambda \\
%\omega & \text{`phase speed'} & \\
%\nu & \text{`fast oscillation',} & \in\lambda\N,\ \gg\lambda
%\end{array} \]
%to be chosen in the very end of the proof.
%
For a given $p$ (fixed in the statement of Proposition \ref{prop:main}),  and its dual Hölder exponent $p'$ define the constants
\begin{equation}
a\coloneqq \frac{d}{p} ,\ b\coloneqq \frac{d}{p'} \ \text{ so that } a+b=d
\label{eq:def-a-b}
\end{equation}
and the scaled functions (defined on the whole space $\R^d$, thus not periodic)
\[\varphi_\mu(x) \coloneqq \mu^a \varphi(\mu x),\ \tilde{\varphi}_\mu(x) \coloneqq \mu^b \varphi(\mu x), \quad \mu \geq 1.  \]

\begin{lem}
For every $\mu \geq 1$, $k \in \N$, $r \in [1, \infty]$,
\begin{equation}\label{eq:phi_mu-scaling}
\|D^k\varphi_\mu\|_{L^r} =  \mu^{a-\frac{d}{r}+k} \|D^k \phi\|_{L^r} \ ,\ \|D^k\tilde{\varphi}_\mu\|_{L^r} = \mu^{b-\frac{d}{r}+k}\|D^k \phi\|_{L^r} . 
\end{equation}
Moreover,
\begin{equation}
\label{eq:mean-value-1}
\int_{\R^d} \varphi_\mu \tilde \varphi_\mu = 1. 
\end{equation}
\end{lem}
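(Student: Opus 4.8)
Both assertions are pure scaling computations: they follow from tracking how an $L^r$-norm, respectively an integral, transforms under the dilation $x \mapsto \mu x$ together with the multiplicative prefactors $\mu^a$ and $\mu^b$. This is the $\R^d$ analogue of the elementary identity \eqref{eq:osc}, the only new feature being the normalising powers of $\mu$ in the definition of $\varphi_\mu$ and $\tilde\varphi_\mu$; since $\varphi$ is smooth and compactly supported all integrals below are finite and the manipulations are justified.

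For the scaling identity \eqref{eq:phi_mu-scaling}, I would first differentiate: by the chain rule $D^k\varphi_\mu(x) = \mu^{a}\,\mu^{k}\,(D^k\varphi)(\mu x) = \mu^{a+k}(D^k\varphi)(\mu x)$. For $r\in[1,\infty)$ I then compute
\[
\|D^k\varphi_\mu\|_{L^r}^r = \mu^{(a+k)r}\int_{\R^d} |(D^k\varphi)(\mu x)|^r\,dx = \mu^{(a+k)r}\mu^{-d}\int_{\R^d}|(D^k\varphi)(y)|^r\,dy,
\]
using the change of variables $y=\mu x$, $dy = \mu^d\,dx$; taking $r$-th roots gives $\|D^k\varphi_\mu\|_{L^r} = \mu^{a+k-d/r}\|D^k\varphi\|_{L^r}$. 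The endpoint $r=\infty$ is immediate from $\|D^k\varphi_\mu\|_{L^\infty} = \mu^{a+k}\|D^k\varphi\|_{L^\infty}$, consistent with the convention $d/\infty = 0$. The statement for $\tilde\varphi_\mu$ is obtained verbatim with $a$ replaced by $b$.

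For \eqref{eq:mean-value-1} I would use $a+b=d$ from \eqref{eq:def-a-b}: pointwise $\varphi_\mu(x)\tilde\varphi_\mu(x) = \mu^{a+b}\varphi(\mu x)^2 = \mu^{d}\varphi(\mu x)^2$, hence
\[
\int_{\R^d}\varphi_\mu\tilde\varphi_\mu = \mu^{d}\int_{\R^d}\varphi(\mu x)^2\,dx = \mu^{d}\mu^{-d}\int_{\R^d}\varphi(y)^2\,dy = 1,
\]
again by the substitution $y=\mu x$ and the normalisation $\int_{\R^d}\varphi^2 = 1$. There is no real obstacle here; the only points requiring a word of care are the use of $a+b=d$ (which is exactly what makes the Jacobian cancel the prefactor) and the treatment of the $r=\infty$ case, both of which are handled above.
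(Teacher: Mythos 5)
Your proof is correct and is exactly the straightforward scaling computation the paper has in mind (the paper omits the proof as ``straightforward''): chain rule for $D^k$, the Jacobian $\mu^{-d}$ from the substitution $y=\mu x$, the endpoint $r=\infty$ handled separately, and the cancellation $a+b=d$ combined with $\int\varphi^2=1$ for the product integral.
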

\noindent The proof is straightforward and thus it is omitted. Note in particular that the $L^p$-norm of $\varphi_\mu$ and the $L^{p'}$-norm of $\tilde{\varphi}_\mu$ are invariant of the scaling. Note also that $\mathrm{supp} \, \varphi_\mu = \mathrm{supp} \, \tilde \varphi_\mu$ and both are contained in a ball with radius at most $r$.  For any given $y \in \Td$, we  define the translation
\begin{equation*}
\tau_y : \Td \to \Td, \qquad \tau_y(x) := x - y.
\end{equation*}
Notice that, for every smooth periodic map $g$
\begin{equation*}
\|D^k (g \circ \tau_y)\|_{L^r} = \|D^k g\|_{L^r} \quad \forall k \in \N, \ \forall r \in [1, \infty].
\end{equation*} 
%and, if $g$ is a vector field,
%\begin{equation*}
%\div (g \circ \tau_y) = (\div g) \circ \tau_y. 
%\end{equation*}

\begin{lem}
There are periodic functions 
\begin{equation*}
\varphi_\mu^j : \Td \to \R, \qquad \tilde \varphi_\mu^j : \Td \to \R, \qquad j=1,\dots, d,
\end{equation*}
such that the same scaling as in \eqref{eq:phi_mu-scaling} holds:
\begin{equation}
\label{eq:phi-j-mu-scaling}
\|D^k\varphi_\mu^j\|_{L^r} =  \mu^{a-\frac{d}{r}+k} \|D^k \varphi\|_{L^r} \ ,\ \|D^k\tilde{\varphi}_\mu^j\|_{L^r} = \mu^{b-\frac{d}{r}+k}\|D^k \varphi\|_{L^r}.
\end{equation}
Moreover, for every $i=1, \dots, d$, 
\begin{equation}
\label{eq:mean-value-1-j}
\fint_{\Td} \big(\varphi_\mu^i  \circ \tau_{se_i}\big)\big( \tilde \varphi_\mu^i \circ \tau_{se_i}\big) = 1,
\end{equation}
and, for every $i \neq j$ and $s \in \R$,
\begin{equation}
\label{eq:disjoint-support}
\left( \varphi_\mu^i \circ \tau_{s e_i} \right) \left( \tilde \varphi_\mu^j \circ \tau_{s e_j} \right) = 0.
\end{equation}
\end{lem}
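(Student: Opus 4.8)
The plan is to build the periodic functions $\varphi_\mu^j, \tilde\varphi_\mu^j$ by periodizing the compactly supported bump $\varphi_\mu, \tilde\varphi_\mu$ and then translating so that the support is centred on the Mikado line $\mathtt x_j$. Concretely, first I would fix $\mu \geq 1$ and observe that since $\operatorname{supp}\varphi_\mu \subseteq B(P,r) \subseteq (0,1)^d$, the periodization
\[
\Phi_\mu(x) \coloneqq \sum_{n \in \Z^d} \varphi_\mu(x - n)
\]
is a well-defined smooth $\Z^d$-periodic function, i.e.\ a function on $\Td$, which agrees with $\varphi_\mu$ on the fundamental cube and whose support (mod $\Z^d$) is the single ball $B(P,r)$; likewise for $\tilde\Phi_\mu$ from $\tilde\varphi_\mu$. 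Because the sum has (locally) exactly one nonzero term, all $L^r$ norms and derivatives of $\Phi_\mu$ on $\Td$ coincide with those of $\varphi_\mu$ on $\R^d$, so the scaling \eqref{eq:phi_mu-scaling} transfers verbatim: this will give the first displayed identity \eqref{eq:phi-j-mu-scaling}, using additionally that composition with the isometric torus translations $\tau_y$ leaves all $L^r$ norms of derivatives unchanged.

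Next I would define $\varphi_\mu^j \coloneqq \Phi_\mu \circ \tau_{P - \zeta_j}$ and $\tilde\varphi_\mu^j \coloneqq \tilde\Phi_\mu \circ \tau_{P - \zeta_j}$, i.e.\ shift the bump so its centre sits at $\zeta_j$ rather than at $P$. Then $\varphi_\mu^j$ is supported in the ball $B(\zeta_j, r)$ on $\Td$, and after the further translation $\tau_{s e_j}$ appearing in \eqref{eq:mean-value-1-j}--\eqref{eq:disjoint-support} the support of $\varphi_\mu^j \circ \tau_{s e_j}$ is $B(\zeta_j + s e_j, r) = B(\mathtt x_j(s), r)$, which is exactly the $r$-tube around the Mikado line traced out in Lemma~\ref{lem:mikado-lines}. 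The scaling \eqref{eq:phi-j-mu-scaling} for $\varphi_\mu^j, \tilde\varphi_\mu^j$ is then immediate from the scaling of $\Phi_\mu, \tilde\Phi_\mu$ together with translation invariance.

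For \eqref{eq:mean-value-1-j}: since $\varphi_\mu^i \circ \tau_{se_i}$ and $\tilde\varphi_\mu^i \circ \tau_{se_i}$ are both supported in the single ball $B(\mathtt x_i(s), r) \subseteq$ a fundamental domain translate, the integral $\fint_{\Td}$ of their product equals the full-space integral $\int_{\R^d} \varphi_\mu \tilde\varphi_\mu$ (change of variables undoing the translation, unfolding the periodization), which is $1$ by \eqref{eq:mean-value-1}; note the normalization $\fint_{\Td} = \int_{(0,1)^d}$ on the unit-volume torus makes the constants match. For \eqref{eq:disjoint-support}: the product $(\varphi_\mu^i \circ \tau_{se_i})(\tilde\varphi_\mu^j \circ \tau_{se_j})$ vanishes unless the balls $B(\mathtt x_i(s), r)$ and $B(\mathtt x_j(s), r)$ intersect; but $d_\Td(\mathtt x_i(s), \mathtt x_j(s)) > 2r$ for all $s$ and all $i \neq j$ by \eqref{eq:lines-disjoint}, so the balls are disjoint and the product is identically zero.

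The only genuine subtlety — the main point to get right rather than the main obstacle — is bookkeeping the supports: one must check that $r$ is small enough that a ball of radius $r$ on $\Td$ lies inside a single fundamental-domain translate (so that "support is one ball" is literally true and periodization introduces no overlap), and that the distance condition \eqref{eq:lines-disjoint} is stated with the factor $2r$ precisely so that two such balls of radius $r$ are disjoint. Since $r$ in Lemma~\ref{lem:mikado-lines} is chosen freely and can be shrunk, and $\varphi$ was chosen with $\operatorname{supp}\varphi \subseteq B(P,r)$ for that same $r$, both requirements are met by construction; everything else is routine change of variables. I would omit the detailed computations, as the lemma statement itself suggests ("the proof is straightforward") for the analogous earlier lemma.
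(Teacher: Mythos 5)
Your proposal is correct and follows essentially the same route as the paper: periodize $\varphi_\mu,\tilde\varphi_\mu$ (possible since $\operatorname{supp}\varphi_\mu\subset(0,1)^d$), translate to the Mikado base points $\zeta_j$, use translation invariance of $L^r$ norms for \eqref{eq:phi-j-mu-scaling} and the $2r$ separation from Lemma~\ref{lem:mikado-lines} for \eqref{eq:disjoint-support}. One small bookkeeping slip: with the paper's convention $\tau_y(x)=x-y$, the translation that recenters the bump from $P$ to $\zeta_j$ is $\tau_{\zeta_j-P}$, not $\tau_{P-\zeta_j}$ (the paper itself simply uses $\tau_{\zeta_j}$, placing the center at $P+\zeta_j$). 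This does not affect the argument, since the separation $d_{\Td}(\mathtt x_i(s),\mathtt x_j(s))>2r$ holds for all $s\in\R$ and is invariant under a common affine shift, so the translated supports remain disjoint regardless of the exact offset chosen.
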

\noindent Notice that \eqref{eq:disjoint-support} means
\begin{equation*}
\varphi_\mu^i (x - s e_i) \tilde \varphi^j (x - s e_j) = 0
\end{equation*}
for every $x \in \Td$. 
\begin{proof}
Since $\varphi_\mu$, $\tilde \varphi_\mu$ have support  contained in $(0,1)^d$, we can consider their periodic extensions, still denoted, with a slight abuse of notation, by $\varphi_\mu$, $\tilde \varphi_\mu$, respectively. We define now the periodic maps
\begin{equation*}
\varphi_\mu^j : = \varphi_\mu \circ \tau_{\zeta_j}, \qquad \tilde \varphi_\mu^j := \tilde \varphi_\mu^j \circ \tau_{\zeta_j},
\end{equation*}
where $\zeta_1, \dots, \zeta_d$ are the points given by Lemma \ref{lem:mikado-lines}. 
It is immediate from the definition and from \eqref{eq:phi_mu-scaling}-\eqref{eq:mean-value-1} that \eqref{eq:phi-j-mu-scaling}-\eqref{eq:mean-value-1-j} holds. Let now $x \in \Td$, $s \in \R$. We have
%To show \eqref{eq:disjoint-support}, we need to prove that
\begin{equation*}
\varphi_\mu^i(x - s e_i) \tilde \varphi_\mu^j(x - se_j) = \varphi_\mu(x - \zeta_i - s e_i) \tilde \varphi_\mu(x - \zeta_j - se_i) = \varphi_\mu(x -  \mathtt x_i(s)) \tilde \varphi_\mu(x -  \mathtt x_j(s)).
\end{equation*}
Observe that, by Lemma \ref{lem:mikado-lines}, 
\begin{equation*}
d_{\Td}\Big(x - \mathtt x_i(s), \, x - \mathtt x_j(x) \Big) = d_{\Td} \left( \mathtt x_i(s), \mathtt x_j(s) \right) > 2r.
\end{equation*}
Since the support of $\varphi_\mu$ and $\tilde \varphi_\mu$ coincide and are both contained in a ball with radius at most $r$, it must be
\begin{equation*}
\varphi_\mu(x -  \mathtt x_i(s)) \tilde \varphi_\mu(x -  \mathtt x_j(s)) = 0,
\end{equation*}
and thus \eqref{eq:disjoint-support} holds. 
\end{proof}

We introduce now the building block of our construction, the space-time Mikado densities and fields. Besides the families of functions $\varphi_\mu^j$, $\tilde \varphi_\mu^j$, $\mu \geq 1$, $j=1,\dots, d$, we fix a smooth periodic function $\psi: \mathbb{T}^{d-1} \to \R$ satisfying
\begin{equation*}
\fint_{\mathbb{T}^{d-1}} \psi = 0, \qquad \fint_{\mathbb{T}^{d-1}} \psi^2 = 1
\end{equation*}
and we define
\begin{equation*}
\psi^j : \Td \to \R, \qquad \psi^j(x) = \psi^j(x_1, \dots, x_d) := \psi(x_1, \dots, x_{j-1}, x_{j+1}, \dots, x_d),
\end{equation*}
for every $j=1,\dots, d$, so that
\begin{equation}
\label{eq:psi:mean:value}
\fint_{\mathbb{T}^{d-1}} \psi^j = 0, \qquad \fint_{\mathbb{T}^{d-1}} (\psi^j)^2 = 1.
\end{equation}
Introduce the parameters
\[ \begin{array}{rll}
\lambda & \text{`fast oscillation',} &\in\N \\
\mu & \text{`concentration',} &\gg \lambda \\
\omega & \text{`phase speed'} & \\
\nu & \text{`very fast oscillation',} & \in\lambda\N,\ \gg\lambda
\end{array} \]
to be chosen in the very end of the proof.
Now we can define the Mikado functions, for $j=1,\dots, d$:
\begin{align*}
&\text{Mikado density} & \Theta^j_{\lambda,\mu,\omega,\nu}(t,x) &\coloneqq \varphi_\mu^j \left( \lambda (x-\omega t e_j) \right) \psi^j(\nu x), \\
&\text{Mikado field} & W^j_{\lambda,\mu,\omega,\nu}(t,x) &\coloneqq \tilde{\varphi}_\mu^j \left( \lambda (x-\omega t e_j) \right) \psi^j (\nu x)e_j, \\
&\text{Quadratic corrector} & Q_{\lambda,\mu,\omega,\nu}^j (t,x) &\coloneqq \frac{1}{\omega} \left(\varphi_\mu^j \tilde{\varphi}_\mu^j\right) \left(\lambda(x-\omega t e_j)\right) \left(\psi^j(\nu x)\right)^2.
\end{align*}
We will use also the shorter notation
\begin{align*}
\Theta^j_{\lambda,\mu,\omega,\nu} = \Theta^j_{\lambda,\mu,\omega,\nu}(t) & \coloneqq \left( (\varphi_\mu^j)_\lambda \circ \tau_{\omega t e_j} \right) \psi^j_\nu, \\
W^j_{\lambda,\mu,\omega,\nu} = W^j_{\lambda,\mu,\omega,\nu}(t) &\coloneqq \left(  (\tilde{\varphi}_\mu^j )_\lambda \circ \tau_{\omega t e_j} \right)  \psi^j_\nu e_j,  \\
Q_{\lambda,\mu,\omega,\nu}^j = Q_{\lambda,\mu,\omega,\nu}^j (t) &\coloneqq \frac{1}{\omega} \left( \left(\varphi_\mu^j \tilde{\varphi}_\mu^j\right)_\lambda \circ \tau_{\omega t e_j} \right) \left(\psi^j_\nu \right)^2,
\end{align*}
where we have used the notation $g_\lambda(x) := g(\lambda x)$ (and $g_\nu(x) := g(\nu x)$), for $g: \Td \to \R$.

\begin{rem}
	The Mikados defined here do not form a stationary solution of the incompressible transport equation, in contrast to those used in \cite{Modena2018,modena-szekelyhidi18}. The ideal cancellation properties
	$\partial_t \, \Theta^j_{\lambda, \mu, \omega, \nu}= \div(\Theta^j_{\lambda, \mu, \omega, \nu} W^j_{\lambda, \mu, \omega, \nu} 	) = 0 = \div W^j_{\lambda, \mu, \omega, \nu}$
		cannot hold here because of the time-dependence and compact support in space of the function $ \varphi(\lambda(x-\omega te_j)) $. However, $ \psi $ is still time-independent and divergence-free so that
\begin{align}
\partial_t \Theta^j_{\lambda, \mu, \nu, \omega} & = - \lambda \omega \left( \left( \partial_j 	\varphi_\mu^j \right)_\lambda \circ \tau_{\omega t e_j} \right) \psi_\nu^j, \label{eq:time-derivative-theta} \\
\div W^j_{\lambda, \mu, \omega, \nu} & = \lambda \left(  (\partial_j \tilde{\varphi}_\mu^j )_\lambda \circ \tau_{\omega t e_j} \right)  \psi^j_\nu e_j, \label{eq:div-wj}
\end{align}
	holds and, because of the fact that $ Q^j=\frac{1}{\omega} \Theta^jW^j $, we still have a set of functions similar to a solution to the transport equation, as stated in the following proposition.
\end{rem}

Set 
\begin{equation}
	\epsilon \coloneqq \frac{d}{p}+\frac{d}{\tilde{p}}-d-1 = \frac{d}{\tilde{p}}-\frac{d}{p'}-1 >0.
	\label{eq:def-epsilon}	
\end{equation}
Note that $\epsilon>0$, because of \eqref{eq:p-cond}. 
\begin{prop}
	Define the global constants $ M $ (not depending on $p, \tilde p$) by
	\begin{align}
	M &\coloneqq 2d \max_{k=0,1} \left\lbrace  \|D^k \varphi\|_{L^\infty} \|D^k \psi\|_{L^\infty} , \ \|\varphi\|^2_{L^\infty} \|\psi\|^2_{L^\infty} \right\rbrace.
	\label{eq:def-M} 
	\end{align}
	The Mikado functions obey the following bounds:
	\begin{subequations}
	\begin{align}
		\left\|\Theta_{\lambda,\mu,\omega,\nu}^j(t) \right\|_{L^p} & \le \frac{M}{2d}, &  
		\left\| W_{\lambda,\mu,\omega,\nu}^j(t) \right\|_{L^{p'}} &\le \frac{M}{2d}, 	&	
		\left\|Q_{\lambda,\mu,\omega,\nu}^j(t) \right\|_{L^p} & \le  \frac{M \mu^b}{\omega},
		\label{eq:Mikado-bound-Lp} \\[1em]
		\left\|\Theta_{\lambda,\mu,\omega,\nu}^j(t) \right\|_{L^1} &\le \frac{M}{\mu^b}, &
		\left\|W_{\lambda,\mu,\omega,\nu}^j(t) \right\|_{L^1} &\le \frac{M}{\mu^a}, &	
		\left\|Q_{\lambda,\mu,\omega,\nu}^j(t) \right\|_{L^1} &\le \frac{M}{\omega}, 
		\label{eq:Mikado-bound-L1}
	\end{align}
	\begin{align}
		\left\|W_{\lambda,\mu,\omega,\nu}^j(t) \right\|_{C^0} &\le M \mu^b, 
		\label{eq:Mikado-bound-Linfty} \\[1em]
		\left\| W^j_{\lambda,\mu,\omega,\nu}(t) \right\|_{W^{1,\tilde{p}}} &\le M \frac{\lambda\mu+\nu}{\mu^{1+\epsilon}}. 
		\label{eq:Mikado-bound-WinW}
	\end{align}
	\end{subequations}
	Furthermore, for every $i \neq j$,
	\begin{equation}
	\label{eq:Mikado-disjoint-support}
	\Theta^i_{\lambda, \mu, \omega, \nu} W^j_{\lambda, \mu, \omega, \nu} = 0
	\end{equation}		
	and  the Mikado functions `solve the continuity equation' in the sense that
	\begin{equation}
	\partial_t Q^j_{\lambda,\mu,\omega,\nu} + \div \left(\Theta^j_{\lambda,\mu,\omega,\nu}W^j_{\lambda,\mu,\omega,\nu}\right) = 0
	\label{eq:Mikado-cancellation}
	\end{equation}
	on $ [0,T] \times\Td $. %, with the convention $ Q=\sum_j Q^j,\ \Theta= \sum_j\ldots$
	\label{prop:mikados}
\end{prop}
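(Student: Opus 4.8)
The plan is to verify each of the stated estimates and identities directly from the explicit definitions of $\Theta^j$, $W^j$, $Q^j$ in terms of $\varphi^j_\mu$, $\tilde\varphi^j_\mu$, $\psi^j$, together with the scaling law \eqref{eq:osc} and the already-proven scaling identities \eqref{eq:phi-j-mu-scaling}. The key structural observation I would use repeatedly is that all three Mikado functions factor as a product $F(\lambda(x-\omega t e_j))\,G(\nu x)$ (or a square of such), where $F$ is built from $\varphi,\tilde\varphi$ (supported in a small ball, hence with $L^r$ norms controlled by \eqref{eq:phi-j-mu-scaling}) and $G$ is built from $\psi^j$ (which lives on $\mathbb{T}^{d-1}$ with the normalizations \eqref{eq:psi:mean:value}). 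Since translation by $\omega t e_j$ is measure-preserving, the $L^r$ norm in $x$ is independent of $t$; and since the $\varphi$-factor and the $\psi$-factor oscillate at different rates in essentially independent directions, the $L^r$ norm of the product is (up to constants) the product of the $L^r$ norms — I would either cite the improved Hölder Lemma~\ref{lem:improvedH} or, more simply, note the product structure splits cleanly because $\psi^j$ depends only on the $d-1$ coordinates other than $x_j$.

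Concretely, I would proceed as follows. First, the disjoint-support identity \eqref{eq:Mikado-disjoint-support}: for $i\neq j$, $\Theta^i W^j$ contains the factor $\big(\varphi^i_\mu\circ\tau_{\omega t e_i}\big)_\lambda\big(\tilde\varphi^j_\mu\circ\tau_{\omega t e_j}\big)_\lambda$, which vanishes by \eqref{eq:disjoint-support} applied with $s=\omega t$ and the scaling $x\mapsto\lambda x$. Second, the continuity identity \eqref{eq:Mikado-cancellation}: since $Q^j=\tfrac1\omega\Theta^j W^j$ and $W^j$ points in direction $e_j$, one computes $\partial_t Q^j$ using that the only $t$-dependence is through the argument $\lambda(x-\omega te_j)$, giving a factor $-\lambda\omega\partial_j$ acting on the $\varphi\tilde\varphi$ block; and $\div(\Theta^j W^j)=\partial_j(\Theta^j W^j_{(j)})$ gives a factor $\lambda\partial_j$ on the same block (the $\psi^j_\nu$ factor is annihilated by $\partial_j$ since $\psi^j$ does not depend on $x_j$, and $(\psi^j_\nu)^2$ likewise). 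The two terms cancel after multiplying by $\tfrac1\omega$. I would write this out via \eqref{eq:time-derivative-theta}–\eqref{eq:div-wj}-style manipulations.

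Third, the norm bounds. For \eqref{eq:Mikado-bound-Lp}–\eqref{eq:Mikado-bound-L1}: using $\|(\varphi^j_\mu)_\lambda\|_{L^r}=\lambda^{-d/r}\|\varphi^j_\mu\|_{L^r}=\mu^{a-d/r}\|\varphi\|_{L^r}\cdot\lambda^{-d/r}$... actually more cleanly, $\|\Theta^j(t)\|_{L^r}\approx\|\varphi_\mu\|_{L^r}\|\psi\|_{L^r}$ modulo the fast-oscillation factors; for $r=p$ the $\varphi_\mu$ part is scale-invariant by \eqref{eq:phi-j-mu-scaling} with $k=0,r=p$ (since $a-d/p=0$), giving the $M/(2d)$ bound, and similarly $\|W^j(t)\|_{L^{p'}}$ is scale-invariant since $b-d/p'=0$; for $r=1$ one gets the decay $\mu^{-b}$ resp. $\mu^{-a}$ from $a-d=-b$ resp. $b-d=-a$. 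The $Q^j$ bounds carry the extra $1/\omega$ and a $\mu^b$ from $\|(\varphi_\mu\tilde\varphi_\mu)\|_{L^p}\lesssim\mu^{a+b-d/p}=\mu^{b}$ ... I would track these exponents carefully against the definition \eqref{eq:def-a-b}. For \eqref{eq:Mikado-bound-Linfty}, $\|W^j(t)\|_{C^0}\le\|\tilde\varphi_\mu\|_{C^0}\|\psi\|_{C^0}=\mu^b\|\varphi\|_{C^0}\|\psi\|_{C^0}$. The main obstacle — and the only estimate requiring real care — is \eqref{eq:Mikado-bound-WinW}: one must differentiate $W^j=\big((\tilde\varphi^j_\mu)_\lambda\circ\tau_{\omega te_j}\big)\psi^j_\nu e_j$, producing by Leibniz either a derivative hitting the $\tilde\varphi$-block (yielding a factor $\lambda\mu$, since $\|D(\tilde\varphi_\mu)_\lambda\|_{L^{\tilde p}}=\lambda\|D\tilde\varphi_\mu\|_{L^{\tilde p}}\approx\lambda\mu^{b-d/\tilde p+1}$) or a derivative hitting $\psi^j_\nu$ (yielding a factor $\nu$), while the $L^{\tilde p}$ norm of the other factor contributes $\mu^{b-d/\tilde p}=\mu^{-1-\epsilon}$ by the definition \eqref{eq:def-epsilon} of $\epsilon$; combining gives the bound $M(\lambda\mu+\nu)\mu^{-1-\epsilon}$. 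I would also note $\|W^j(t)\|_{L^{\tilde p}}\lesssim\mu^{b-d/\tilde p}=\mu^{-1-\epsilon}\le\mu^{-1-\epsilon}(\lambda\mu+\nu)$ so the full $W^{1,\tilde p}$ norm is absorbed. All the per-term constants are dominated by the definition \eqref{eq:def-M} of $M$, with the factor $2d$ absorbing the $d$ terms in a gradient and an extra factor $2$ for safety; I would remark that the constant is uniform in $p,\tilde p$ precisely because the $L^\infty$ norms of $\varphi,\psi$ and their first derivatives do not depend on the exponents.
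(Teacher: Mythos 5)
Your overall plan matches the paper's proof: crude H\"older with one factor in $L^\infty$ plus the scaling identities \eqref{eq:phi-j-mu-scaling} for the $L^r$ bounds, Leibniz plus \eqref{eq:osc} and \eqref{eq:phi-j-mu-scaling} for \eqref{eq:Mikado-bound-WinW}, reduction of \eqref{eq:Mikado-disjoint-support} to \eqref{eq:disjoint-support}, and direct differentiation (using $\partial_j\psi^j_\nu=0$) for \eqref{eq:Mikado-cancellation}. The exponent arithmetic you carry out is correct and gives the stated bounds.

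However, the motivating remark that ``the $L^r$ norm of the product is (up to constants) the product of the $L^r$ norms'' because ``the product structure splits cleanly'' is false: $\varphi^j_\mu$ is a bump depending on \emph{all} $d$ coordinates, including $x_j$, so the two factors in $\Theta^j$ are not functions of disjoint variable sets and the $L^r$ norm of their product does not factor. Routing instead through the improved H\"older Lemma~\ref{lem:improvedH}, the other option you raise, would also be a mistake: since $\bigl\|(\varphi^j_\mu)_\lambda\circ\tau_{\omega te_j}\bigr\|_{C^1}\sim\lambda\mu^{a+1}$, that lemma introduces an error term depending on $\lambda,\mu,\nu$, whereas Proposition~\ref{prop:mikados} asserts bounds that must hold uniformly in all these parameters. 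The paper sidesteps both issues by writing the crude bound
$\|\Theta^j(t)\|_{L^r}\le\bigl\|(\varphi^j_\mu)_\lambda\circ\tau_{\omega te_j}\bigr\|_{L^r}\,\|\psi^j_\nu\|_{L^\infty}=\|\varphi^j_\mu\|_{L^r}\,\|\psi\|_{L^\infty}$,
which is what your later computations in fact use. Two further small slips worth correcting: $\|(\varphi^j_\mu)_\lambda\|_{L^r}=\lambda^{-d/r}\|\varphi^j_\mu\|_{L^r}$ is the $\R^d$ scaling, not the periodic one \eqref{eq:osc}, under which $\|f_\lambda\|_{L^r(\Td)}=\|f\|_{L^r(\Td)}$; and in the disjoint-support step the translated-then-dilated factor is $(\varphi^i_\mu)_\lambda\circ\tau_{\omega te_i}$, not $(\varphi^i_\mu\circ\tau_{\omega te_i})_\lambda$, so the parameter to plug into \eqref{eq:disjoint-support} is $s=\lambda\omega t$, not $s=\omega t$. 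With these corrections your argument is the paper's.
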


\begin{proof}
	The inequalities in \eqref{eq:Mikado-bound-Lp}-\eqref{eq:Mikado-bound-L1}-\eqref{eq:Mikado-bound-Linfty} are immediate consequence  of \eqref{eq:phi-j-mu-scaling}. We show only the first inequality in \eqref{eq:Mikado-bound-Lp}, the other ones being completely similar:
	\begin{equation*}
	\begin{aligned}
	\left\| \Theta^j_{\lambda, \mu, \omega, \nu}(t) \right\|_{L^p}  
	& \leq \left\|  (\varphi_\mu^j)_\lambda \circ \tau_{\omega t e_j} \right\|_{L^p} \|\psi_\nu^j\|_{L^\infty}  \\
	& = \left\|  \varphi_\mu^j \right\|_{L^p} \|\psi^j\|_{L^\infty}  \\
	& = \left\|  \varphi \right\|_{L^p} \|\psi\|_{L^\infty}  \\
	& \leq \left\|  \varphi \right\|_{L^\infty} \|\psi\|_{L^\infty}  \\
	& \leq \frac{M}{2d}.
	\end{aligned}
	\end{equation*}
%	\eqref{eq:Mikado-bound-Theta,W} is an immediate consequence of the definition of $M$ and the scaling properties of $ \varphi_\mu $ and $ \tilde{\varphi}_\mu $, respectively.
%	
%	The equality in \eqref{eq:Mikado-bound-Theta*W} is due to the disjoint supports in space-time of Mikados in different directions (to be more precise: of $ \varphi_\mu^i $ and $ \tilde{\varphi}_\mu^j $ where $ i\ne j $). For the inequality observe that $ \Theta^j_{\lambda,\mu,\omega,\nu} W^j_{\lambda,\mu,\omega,\nu} = \left(\varphi_\mu^j \tilde{\varphi}_\mu^j\right)_\lambda \left(\psi_\nu^j\right)^2  $ and
%	\[ \|\varphi_mu^j(x)\tilde{\varphi}_\mu^j(x)\|_{L^1} = \|\mu^d\varphi^2(\mu x)\|_{L^1} = \mu^d\mu^{-d}\|\varphi^2\|_{L^1} = 1. \]
%	\eqref{eq:Mikado-bound-Q} follows directly from \eqref{eq:Mikado-bound-Theta*W} by the observation
%	\[ Q^j_{\lambda,\mu,\omega,\nu} = \frac{1}{\omega} \Theta^j_{\lambda,\mu,\omega,\nu} W^j_{\lambda,\mu,\omega,\nu}. \]
%	For \eqref{eq:Mikado-bound-QLp} the concentration scaling implies
%	\[ \|\mu^d \varphi(\mu x)\|_{L^p} = \mu^d\mu^{-d/p} \|\varphi^2\|_{L^p} = \mu^b \|\varphi^2\|_{L^p}. \]
	Inequality \eqref{eq:Mikado-bound-WinW} requires direct calculation: using \eqref{eq:osc}, we get
	\begin{align*}
		\Big\|  W^j_{\lambda,\mu,\omega,\nu}(t) \Big\|_{W^{1,\tilde{p}}} 
		& \le \left\| (\tilde \varphi_\mu^j)_\lambda \circ \tau_{\omega t e_j} \right\|_{L^{\tilde p}} \left\| \psi^j_\nu \right\|_{L^\infty} 
		+ \left\| D \left((\tilde \varphi_\mu^j)_\lambda \circ \tau_{\omega t e_j} \right) \right\|_{L^{\tilde p}} \left\| \psi^j_\nu \right\|_{L^\infty}   \\
		& \qquad + \left\| (\tilde \varphi_\mu^j)_\lambda \circ \tau_{\omega t e_j} \right\|_{L^{\tilde p}} \left\|D (\psi^j_\nu) \right\|_{L^\infty}
		\\
		&\le \|\tilde{\varphi}_\mu^j\|_{L^{\tilde{p}}} \|\psi^j\|_{L^\infty} + \lambda \left\|D \tilde{\varphi}_\mu^j \right\|_{L^{\tilde{p}}} \|\psi^j\|_{L^\infty} + \nu \|\tilde{\varphi}_\mu^j\|_{L^{\tilde{p}}} \|D\psi^j\|_{L^\infty} \\
\text{(by \eqref{eq:phi-j-mu-scaling})}
		&\le \mu^{d/p' - d/\tilde p} \|\varphi\|_{L^{\tilde p}} \|\psi\|_{L^\infty} + \lambda \mu^{d/p' - d/\tilde p + 1} \|D \varphi\|_{L^{\tilde p}} \|\psi\|_{L^\infty} \\ 
		& \qquad + \nu \mu^{d/p' - d/\tilde p} \|\varphi\|_{L^{\tilde p}} \|\psi\|_{L^\infty} \\
		&\le M \left(\frac{\nu}{\mu^{1+\epsilon}} + \frac{\lambda}{\mu^\epsilon}\right).
	\end{align*}
	Equality \eqref{eq:Mikado-disjoint-support} is an immediate consequence of \eqref{eq:disjoint-support}. To prove \eqref{eq:Mikado-cancellation}, we observe that
	\begin{equation*}
	\Theta^j_{\lambda, \mu, \omega, \nu}(t,x) W^j_{\lambda, \mu, \omega, \nu}(t,x) = \omega Q^j_{\lambda, \mu, \omega, \nu}(t,x) e_j = F(x - \omega t e_j) \psi_\nu^j(x) e_j, 
	\end{equation*}
	for some $F: \Td \to \R$, whose precise form is not important. Since $\psi_\nu^j e_j$ is time independent and divergence free, we get
	\begin{equation*}
	\begin{aligned}
	\div \left( \Theta^j_{\lambda, \mu, \omega, \nu} W^j_{\lambda, \mu, \omega, \nu} \right) & = \nabla F \cdot \psi_\nu^j e_j,  \\
	\partial_t Q^j_{\lambda, \mu, \omega, \nu} & = -  \nabla F  \cdot \psi_\nu^j e_j,
	\end{aligned}	
	\end{equation*}
	and thus \eqref{eq:Mikado-cancellation} holds. 
%	
%	
%	
%	 Because of the disjoint supports of $ \varphi^i, \varphi^j $ if $ i\ne j $ the $ j $-th component of the vector $ \Theta_{\lambda,\mu,\omega,\nu} W_{\lambda,\mu,\omega,\nu} $ simply is $ \Theta^j_{\lambda,\mu,\omega,\nu} W^j_{\lambda,\mu,\omega,\nu} = \omega Q^j_{\lambda,\mu,\omega,\nu} $. By the observation
%	 \[ \partial_t \left(\varphi_\mu\tilde{\varphi}_\mu\right)(\lambda(x-\omega te_j)) = -\omega \partial_j \left(\varphi_\mu\tilde{\varphi}_\mu\right)(\lambda(x-\omega te_j)) \]
%	 and the fact that $ \partial_t \psi^j = \partial_j \psi^j = 0 $ \eqref{eq:Mikado-cancellation} follows, which concludes the proof.
\end{proof}

\subsection{Definition of perturbations}
\label{subsec:construct}
Given $(\rho_0,u_0,R_0)$ as in Proposition~\ref{prop:main}, we denote by $R_0^j(t,x)$ the components of the vector $R_0(t,x)$, i.e.
\begin{equation*}
R_0(t,x) = \sum_{j=1}^d R_0^j(t,x) e_j.
\end{equation*}
We now define the new density and velocity field as
\begin{align*}
\rho_1(t,x) &\coloneqq \rho_0(t,x) + \vartheta(t,x) + \vartheta_c(t) + q(t,x) + q_c(t)\\
u_1(t,x) &\coloneqq u_0(t,x) + w(t,x) + w_c(t,x)
\end{align*}
where $\vartheta$, $q$ and $w$ are the Mikado density, quadratic corrector term and Mikado flow weighted by the defect field $R_0$, defined as follows:
\begin{align*}
\vartheta(t,x) &\coloneqq \eta\sum_{j=1}^d \chi_j(t,x) \operatorname{sgn}\left(R_0^j(t,x)\right) \left|R_0^j(t,x)\right|^{1/p} \Theta_{\lambda,\mu,\omega,\nu}^j(t,x), \\
w(t,x) &\coloneqq \frac{1}{\eta}\sum_{j=1}^d \chi_j(t,x) \left|R_0^j(t,x)\right|^{1/p'} W_{\lambda,\mu,\omega,\nu}^j(t,x), \\
q(t,x) &\coloneqq \sum_{j=1}^d \chi_j^2(t,x) R_0^j(t,x) Q_{\lambda,\mu,\omega, \nu}^j(t,x).
\end{align*}
Here $\lambda, \mu, \omega, \nu$ will be chosen in Section \ref{sec:proof} to conclude the proof of Proposition \ref{prop:main}, the $\chi_j:[0,T]\times\Td\to[0,1]$ are cut-off functions which ensure the smoothness of the perturbations at the zero set of $R_0^j$:
\[ \chi_j(t,x) =
\begin{cases} 0 \text{ if } |R_0^j(t,x)| \le \frac{\delta}{4d}, \\
1 \text{ if } |R_0^j(t,x)| \ge \frac{\delta}{2d},
\end{cases}
\]
and $ \eta $ and $ \delta $ are the strictly positive numbers which appear in the statement of Proposition~\ref{prop:main}. 

The parameters $ \lambda,\mu,\omega,\nu \gg1$ will be fixed in Section \ref{sec:proof}. We will however use the shorter notation
\begin{equation*}
\begin{aligned}
\vartheta(t) & \coloneqq \sum_j a_j(t) \Theta^j(t), &
w(t) & \coloneqq \sum_j b_j(t) W^j(t), &
q(t) & \coloneqq \sum_j a_j(t) b_j(t) Q^j(t),
\end{aligned}
\end{equation*}
where
\begin{equation*}
\begin{aligned}
a_j(t) & := \eta \chi_j(t) \operatorname{sgn}\left(R_0^j(t)\right) \left|R_0^j(t)\right|^{1/p}, &
b_j(t) & := \frac{1}{\eta}  \chi_j(t)  \left|R_0^j(t)\right|^{1/p'}.
\end{aligned}
\end{equation*}
Notice that
\begin{equation*}
a_j(t) b_j(t) = \chi_j^2(t) R^j(t),
\end{equation*}
and the following estimates holds true:
\begin{subequations}
\label{eq:aj-bj}
\begin{equation}
\label{eq:aj-bj-lp}
\|a_j(t)\|_{L^p} \leq \eta \|R_0(t)\|_{L^1}^{1/p}, \quad \|b_j(t)\|_{L^{p'}} \leq \eta^{-1} \|R_0(t)\|_{L^1}^{1/p'}
\end{equation}
and, for every $k \in \N$, 
\begin{equation}
\label{eq:aj-bj:derivatives}
\|a_j\|_{C^k}, \|b_j\|_{C^k} \leq C(\eta, \delta, \|R_0\|_{C^k}). 		
\end{equation}
\end{subequations}
The corrector terms $\vartheta_c$, $ q_c $ are needed for $\rho_1$ to have zero mean value:
\begin{align*}
\vartheta_c(t) &\coloneqq -\fint_{\Td} \vartheta(t,x)dx \\
q_c(t) &\coloneqq -\fint_{\Td} q(t,x)dx.
\end{align*}
The corrector term $w_c$ is needed for $u_1$ to be divergence-free. We first compute
\begin{align*}
	\div w(t) &= \sum_j \div ( a_j(t) W^j(t)) \\
	& = \sum_j \div \left( a_j(t) \left( \left(\tilde \varphi_\mu^j\right)_\lambda \circ \tau_{\omega t e_j} \right) \psi^j_\nu e_j \right) \\
%\text{(by \eqref{eq:div-wj})}
	& = \sum_j \nabla \Big( a_j(t) \, \left(\tilde \varphi_\mu^j\right)_\lambda \circ \tau_{\omega t e_j} \Big) \cdot e_j   \, \psi_\nu^j.
\end{align*}
We thus define
\begin{align}
\label{eq:wc-definition}
w_c(t) &\coloneqq - \sum_j \adiv_N \left( f_j(t),  \,  \psi^j_\nu\right), 
\end{align}
where we set for simplicity
	\begin{equation}
	\label{eq:fj}
	f_j(t)\coloneqq  \nabla \Big( a_j(t) \, \left(\tilde \varphi_\mu^j\right)_\lambda \circ \tau_{\omega t e_j} \Big)
%	  \nabla \Big( a_j(t) \, \tilde \varphi_\mu^j \circ \tau_{\omega t e_j} \Big) \cdot e_j 
	\end{equation}		
and $N$ is some large integer, which will be chosen in Section \ref{sec:proof} together with the parameters $\lambda, \mu, \omega, \nu$. Notice that this definition of the corrector $ w_c $ really cancels the divergence of $ w $.

%\begin{rem}
%	In the proof of Proposition~\ref{prop:main} we will choose the parameters $ \lambda,\mu,\omega,\nu \gg1 $ with the additional condition that $\nu$ is a multiple of $\lambda$ (so that the Mikado functions are $(1/\lambda)$-periodic).
%	In the following sections we will omit the multiple indices of the Mikado functions for notational simplicity and just write
%	\[ \Theta^j = \Theta_{\lambda,\mu,\omega,\nu}^j,\ W^j= W_{\lambda,\mu,\omega,\nu}^j,\ Q^j=Q_{\lambda,\mu,\omega,\nu}^j. \]
%\end{rem}
\subsection{Estimates on the perturbations}
In this section we will formulate and prove all the necessary estimates on the perturbations, beginning with the density terms.
\begin{rem}
	In this and in the next two sections, Sections \ref{sec:error} and \ref{sec:proof}, we will denote by $C$ any constant which can depend on the constant $M$ defined in \eqref{eq:def-M}, on all the parameters in the statement of Proposition \ref{prop:main}, i.e.
	\begin{equation*}
	p, \tilde p, \delta, \eta, \rho_0, u_0, R_0,
	\end{equation*}
	on the parameter $N$ to be fixed in Section \ref{sec:proof} (and on the properties of the functions $\phi, \psi$ fixed in Section \ref{subsec:mikado},  in particular their derivatives and antiderivatives up to order $ N $ as in the definition of $ w_c $),	but not on 
	\begin{equation*}
	\lambda, \mu, \omega, \nu.
	\end{equation*}		
%	We will be rather explicit about dependencies of the constants. However, they may always be implicitly dependent of the fixed `global parameters' such as $d$, $p$, $\tilde{p}$, $M$ and the properties of $\varphi$ and $\psi$,t is crucial that they do not depend on any of the parameters $ \lambda,\mu,\omega,\nu $. WRITE THAT THE CONSTANT CAN DEPEND ON N (EXPLICITLY NOTED)
\end{rem}

	\begin{lem}[$ \vartheta $ in $L^p$ comparable to $ R_0 $]
	It holds
	\begin{equation}
	\|\vartheta(t)\|_{L^p} \le \frac{M\eta}{2} \|R_0(t)\|_{L^1}^{1/p} + \frac{C}{\lambda^{1/p}}.
	\label{eq:theta-in-Lp}
	\end{equation}
	\label{lem:thetaLp}
\end{lem}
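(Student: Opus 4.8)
The plan is to estimate $\|\vartheta(t)\|_{L^p}$ term by term over the sum $j = 1, \dots, d$, using the decomposition $\vartheta(t) = \sum_j a_j(t) \Theta^j(t)$, and to extract the factor $\|R_0(t)\|_{L^1}^{1/p}$ from the coefficients $a_j$ while keeping all the error contributions in the $C/\lambda^{1/p}$ term. The natural tool is the improved Hölder inequality of Lemma~\ref{lem:improvedH}, applied with the slowly varying factor $f = a_j(t)$ and the fast oscillating factor coming from the Mikado density $\Theta^j$; however one has to be slightly careful, because $\Theta^j_{\lambda,\mu,\omega,\nu}(t,x) = \bigl((\varphi_\mu^j)_\lambda \circ \tau_{\omega t e_j}\bigr)(x)\,\psi^j_\nu(x)$ is itself a product of a function oscillating at frequency $\lambda\mu$ (plus a translation) and one oscillating at frequency $\nu \gg \lambda$, so it is not literally of the form $g_\lambda$. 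The cleanest route is to absorb the translation $\tau_{\omega t e_j}$ (which does not affect any $L^r$ norm) and to write $\Theta^j(t) = h_\lambda$ where $h = \varphi_\mu^j \cdot (\psi^j)_{\nu/\lambda}$ is a fixed smooth periodic function (recall $\nu \in \lambda\N$), so that Lemma~\ref{lem:improvedH} applies with oscillation parameter $\lambda$.

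Carrying this out: for each $j$, Lemma~\ref{lem:improvedH} (in the form \eqref{eq:improvedH}) gives
\begin{equation*}
\|a_j(t)\,\Theta^j(t)\|_{L^p} \le \|a_j(t)\|_{L^p}\,\|\Theta^j(t)\|_{L^p} + \frac{C_p}{\lambda^{1/p}}\,\|a_j(t)\|_{C^1}\,\|\Theta^j(t)\|_{L^p}.
\end{equation*}
For the first summand I would bound $\|\Theta^j(t)\|_{L^p} \le M/(2d)$ by the first inequality in \eqref{eq:Mikado-bound-Lp}, and $\|a_j(t)\|_{L^p} \le \eta\,\|R_0(t)\|_{L^1}^{1/p}$ by \eqref{eq:aj-bj-lp}; summing over $j = 1, \dots, d$ produces exactly $\tfrac{M\eta}{2}\|R_0(t)\|_{L^1}^{1/p}$. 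For the error summand, $\|a_j(t)\|_{C^1} \le C(\eta,\delta,\|R_0\|_{C^1})$ by \eqref{eq:aj-bj:derivatives} and again $\|\Theta^j(t)\|_{L^p} \le M/(2d)$, so the whole second term is bounded by $C/\lambda^{1/p}$ with $C$ independent of the parameters $\lambda,\mu,\omega,\nu$ (note that although $\Theta^j$ also carries the frequency $\nu$, that is hidden inside the function $h$ whose $L^p$ norm we have already controlled uniformly — crucially the improved Hölder bound only costs $\lambda^{-1/p}$, not $\nu^{-1/p}$, which is all we need here). Summing the $d$ terms gives \eqref{eq:theta-in-Lp}.

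The one genuine subtlety — and the step I would be most careful about — is precisely the recasting of $\Theta^j(t)$ as $h_\lambda$ for a $\lambda$-independent function $h$: one must check that $h = \varphi_\mu^j \cdot (\psi^j)_{\nu/\lambda}$ (after discarding the harmless translation by $\omega t e_j$) is genuinely periodic on $\Td$, which uses $\nu \in \lambda\N$, and that $\|h\|_{L^p} = \|\Theta^j(t)\|_{L^p}$, which is immediate from \eqref{eq:osc} and translation invariance. Everything else is a routine application of the estimates already assembled in Proposition~\ref{prop:mikados} and in \eqref{eq:aj-bj}. The constant $C$ will depend on $M$, on $p$, on $\eta,\delta$ and on $\|R_0\|_{C^1}$, all of which are permitted, so no bookkeeping issue arises.
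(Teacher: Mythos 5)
Your proposal is correct and follows exactly the paper's own argument: decompose $\vartheta(t)=\sum_j a_j(t)\Theta^j(t)$, apply the improved H\"older inequality of Lemma~\ref{lem:improvedH} with $f=a_j(t)$ and $g_\lambda=\Theta^j(t)$, and then invoke \eqref{eq:Mikado-bound-Lp} and \eqref{eq:aj-bj}. The only thing you make more explicit than the paper is the justification that $\Theta^j(t)$ can be written as a $\lambda$-dilation of a fixed periodic function (using $\nu\in\lambda\N$ and the harmlessness of the translation $\tau_{\omega t e_j}$); the paper compresses this into the parenthetical remark that $\Theta^j(t)$ is $1/\lambda$-periodic.
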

\begin{proof}
	Applying the improved Hölder inequality \eqref{eq:improvedH} with $ f= a_j(t)$ and $ g_\lambda=\Theta^j(t)$ (recall that $\Theta^j(t)$ is $1/\lambda$-periodic, as $\nu$ is an integer multiple of $\lambda$) we obtain
	\begin{align*}
		\|\vartheta(t)\|_{L^p} 
		& \le \|a_j(t)\|_{L^p} \left\|\Theta^j(t)\right\|_{L^p} + \frac{C_p}{\lambda^{1/p}} \left\|a_j \right\|_{C^1} \left\|\Theta^j(t)\right\|_{L^p} \\
		\text{(by \eqref{eq:Mikado-bound-Lp} and \eqref{eq:aj-bj})} 
		& \le \frac{M\eta}{2d} \left\|R_0(t)\right\|_{L^1}^{1/p} + \frac{C}{\lambda^{1/p}}. 
	\end{align*}
	Summing over $j$, we get the desired inequality.
%	Now observe that
%	\[ \left\|\chi_j|R_0^j(t)|^{1/p}\right\|_{L^p} = \left( \int_{\Td} \chi_j^p|R_0^j(t,x)| \right)^{1/p} \le \left( \int_{\Td} |R_0^j(t,x)| \right)^{1/p} = \left\|R_0^j(t)\right\|_{L^1}^{1/p} \]
%	and clearly
%	\[ \left\|\chi_j|R_0^j(t)|^{1/p}\right\|_{C^1} \le C(\delta,p) \|R_0^j\|_{C^1}. \]
%	Together with \eqref{eq:Mikado-bound-Theta,W} this proves the desired inequality.
\end{proof}

\begin{lem}[$ q $ small in $L^p$]
It holds
	\begin{equation}
	\|q(t)\|_{L^p} \le  \frac{C\mu^b}{\omega}.
	\label{eq:q-in-Lp}
	\end{equation}
	\label{lem:qLp}
\end{lem}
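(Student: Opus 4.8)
The plan is to estimate $\|q(t)\|_{L^p}$ directly from the definition $q(t) = \sum_j a_j(t) b_j(t) Q^j(t)$, treating each summand separately and using the explicit bound on $Q^j$ together with the fact that $Q^j$ carries a factor $1/\omega$ and is a fastly oscillating function. First I would apply the triangle inequality to reduce to bounding $\|a_j(t) b_j(t) Q^j(t)\|_{L^p}$ for each $j$. Since $a_j b_j = \chi_j^2 R_0^j$ is a smooth function independent of the oscillation parameters, and $Q^j(t) = \frac{1}{\omega}\left((\varphi_\mu^j \tilde\varphi_\mu^j)_\lambda \circ \tau_{\omega t e_j}\right)(\psi^j_\nu)^2$ is $1/\lambda$-periodic (recall $\nu \in \lambda\N$), I would invoke the improved Hölder inequality \eqref{eq:improvedH} with $f = a_j b_j$ and $g_\lambda = Q^j(t)$, getting
\begin{equation*}
\|a_j b_j Q^j(t)\|_{L^p} \le \|a_j b_j\|_{L^p}\|Q^j(t)\|_{L^p} + \frac{C_p}{\lambda^{1/p}}\|a_j b_j\|_{C^1}\|Q^j(t)\|_{L^p}.
\end{equation*}

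Then I would use the third bound in \eqref{eq:Mikado-bound-Lp}, namely $\|Q^j(t)\|_{L^p} \le M\mu^b/\omega$, together with the estimates \eqref{eq:aj-bj:derivatives} on $\|a_j b_j\|_{C^1}$ (hence also on $\|a_j b_j\|_{L^p}$), all of which are bounded by constants $C$ in the sense of the Remark preceding the lemma (depending on $\eta, \delta, \|R_0\|_{C^1}$ but not on $\lambda,\mu,\omega,\nu$). This gives $\|a_j b_j Q^j(t)\|_{L^p} \le C\mu^b/\omega + C\mu^b/(\lambda^{1/p}\omega)$, and since $\lambda \ge 1$ the second term is absorbed into the first. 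Summing over the $d$ values of $j$ and renaming the constant yields $\|q(t)\|_{L^p} \le C\mu^b/\omega$, which is the claim.

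Alternatively — and perhaps more simply, since no cancellation is needed here — one can skip the improved Hölder inequality entirely and just bound $\|a_j b_j Q^j(t)\|_{L^p} \le \|a_j b_j\|_{L^\infty}\|Q^j(t)\|_{L^p} \le C \cdot M\mu^b/\omega$ using \eqref{eq:Mikado-bound-Lp} and \eqref{eq:aj-bj:derivatives}, then sum over $j$. I expect there to be no real obstacle in this proof: it is a routine application of Hölder's inequality combined with the already-established Mikado bounds, and the only mild point to be careful about is that the constant is allowed to depend on all of $p, \tilde p, \delta, \eta, R_0$ and $N$ but must be independent of the four oscillation parameters — which is automatic here since the $\mu^b/\omega$ dependence has been made fully explicit.
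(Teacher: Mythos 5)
Your proposal is correct, and the ``alternatively'' route at the end is exactly what the paper does: it bounds $\|q(t)\|_{L^p} \le \sum_j \|a_j b_j\|_{C^0}\|Q^j(t)\|_{L^p}$ via ordinary H\"older and then applies \eqref{eq:Mikado-bound-Lp} and \eqref{eq:aj-bj:derivatives}. The improved-H\"older variant you describe first would also work, but as you yourself observe, it is unnecessary here since no oscillation-induced gain is needed.
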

\begin{proof}
	We obtain \eqref{eq:q-in-Lp} simply from the Hölder inequality, using \eqref{eq:Mikado-bound-Lp} and \eqref{eq:aj-bj:derivatives}:
	\begin{equation*}
	\|q(t)\|_{L^p} \le \sum_j \|a_j b_j\|_{C^0} \|Q^j(t)\|_{L^p} \le C \frac{\mu^b}{\omega} \raggedright\qedhere
	\end{equation*}
\end{proof}

\begin{lem}[$ \vartheta_c $ and $ q_c $ small as numbers]
It holds
	\begin{align}
		\left| \vartheta_c(t) \right| &\le C \mu^{-b},
		\label{eq:thetaC}\\
		\left| q_c(t) \right| &\le C \omega^{-1}.
		\label{eq:qC}
	\end{align}
	\label{lem:thetaC-qC}
\end{lem}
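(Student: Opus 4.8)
The plan is to bound the two correctors $\vartheta_c(t)$ and $q_c(t)$ by estimating the mean values that define them, using only the $L^1$ estimates already available for the Mikado building blocks and the uniform bounds on the amplitudes $a_j$, $b_j$. Recall that $\vartheta_c(t) = -\fint_{\Td}\vartheta(t,x)\,dx$ and $q_c(t) = -\fint_{\Td}q(t,x)\,dx$, so in both cases I just need to control an integral of a sum over $j$ of products of a bounded amplitude and a Mikado function.

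First I would treat $\vartheta_c$. Since $|\fint_{\Td}\vartheta(t)| \le \|\vartheta(t)\|_{L^1}$, and $\vartheta(t) = \sum_j a_j(t)\Theta^j(t)$, I estimate
\begin{equation*}
\left|\vartheta_c(t)\right| \le \sum_{j=1}^d \|a_j(t)\|_{C^0}\,\big\|\Theta^j_{\lambda,\mu,\omega,\nu}(t)\big\|_{L^1}.
\end{equation*}
By \eqref{eq:aj-bj:derivatives} we have $\|a_j\|_{C^0} \le C$, and by the first inequality in \eqref{eq:Mikado-bound-L1} we have $\|\Theta^j_{\lambda,\mu,\omega,\nu}(t)\|_{L^1} \le M\mu^{-b}$. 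Combining these gives $|\vartheta_c(t)| \le C d\, M \mu^{-b} \le C\mu^{-b}$, which is \eqref{eq:thetaC}.

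For $q_c$ the argument is entirely analogous: $|q_c(t)| \le \|q(t)\|_{L^1} \le \sum_j \|a_j b_j\|_{C^0}\,\|Q^j_{\lambda,\mu,\omega,\nu}(t)\|_{L^1}$. Using $a_j b_j = \chi_j^2 R_0^j$, which is bounded in $C^0$ by a constant depending only on $\|R_0\|_{C^0}$ (hence absorbed into $C$) — or alternatively using \eqref{eq:aj-bj:derivatives} for each factor — together with the third inequality in \eqref{eq:Mikado-bound-L1}, namely $\|Q^j_{\lambda,\mu,\omega,\nu}(t)\|_{L^1} \le M\omega^{-1}$, we obtain $|q_c(t)| \le C\omega^{-1}$, which is \eqref{eq:qC}. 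There is no real obstacle here; the only thing to be mildly careful about is that one should use the $L^1$ bounds \eqref{eq:Mikado-bound-L1} (which carry the favorable decay in $\mu$ and $\omega$) rather than the $L^p$ bounds \eqref{eq:Mikado-bound-Lp}, and that the amplitudes enter only through their $C^0$ norms, which are harmless constants in the convention of the preceding remark.
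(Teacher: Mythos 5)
Your proof is correct and follows exactly the same route as the paper: bound $|\vartheta_c(t)|$ and $|q_c(t)|$ by $\|\vartheta(t)\|_{L^1}$ and $\|q(t)\|_{L^1}$ respectively, then combine the $L^1$ Mikado bounds \eqref{eq:Mikado-bound-L1} with the $C^0$ bounds on the amplitudes from \eqref{eq:aj-bj:derivatives}. The only difference is that you spell out the intermediate H\"older step more explicitly than the paper does.
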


\begin{proof}
	Clearly the correctors are bounded by the $ L^1 $-norm of $ \vartheta(t)$ and $ q(t) $, so \eqref{eq:thetaC} and \eqref{eq:qC} follow immediately from \eqref{eq:Mikado-bound-L1} and \eqref{eq:aj-bj:derivatives}:
	\[ \left| \vartheta_c(t) \right| \leq \|\vartheta(t)\|_{L^1} \leq C \mu^{-b}, \qquad  \left|q_c(t)\right| \le \|q(t)\|_{L^1} \le C \omega^{-1}. \qedhere \]
\end{proof}

\begin{lem}[$ w $ in $L^{p'}$ comparable to $ R_0 $]
It holds
	\begin{equation}
	\|w(t)\|_{L^{p'}} \le \frac{M}{2\eta} \|R_0(t)\|_{L^1}^{1/p'} + \frac{C}{\lambda^{1/p'}}.
	\label{eq:w-in-Lp'}
	\end{equation}
	\label{lem:wLp'}
\end{lem}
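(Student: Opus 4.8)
The statement to prove is $\|w(t)\|_{L^{p'}} \le \frac{M}{2\eta}\|R_0(t)\|_{L^1}^{1/p'} + \frac{C}{\lambda^{1/p'}}$, which has exactly the same structure as Lemma~\ref{lem:thetaLp} with the roles of $p$ and $p'$ swapped and the Mikado density $\Theta^j$ replaced by the Mikado field $W^j$. Accordingly, the plan is to mirror that proof verbatim. First I would recall that $w(t) = \sum_{j=1}^d b_j(t) W^j(t)$, where $b_j(t) = \eta^{-1}\chi_j(t)|R_0^j(t)|^{1/p'}$, and that $W^j(t) = \bigl((\tilde\varphi_\mu^j)_\lambda \circ \tau_{\omega t e_j}\bigr)\psi^j_\nu e_j$ is $1/\lambda$-periodic in space, since $\nu \in \lambda\N$ guarantees $\psi^j_\nu$ is also $1/\lambda$-periodic.

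Then I would apply the improved Hölder inequality \eqref{eq:improvedH}, with exponent $p'$, taking $f = b_j(t)$ and $g_\lambda = W^j(t)$, to get
\[
\|b_j(t) W^j(t)\|_{L^{p'}} \le \|b_j(t)\|_{L^{p'}} \|W^j(t)\|_{L^{p'}} + \frac{C_{p'}}{\lambda^{1/p'}} \|b_j\|_{C^1} \|W^j(t)\|_{L^{p'}}.
\]
For the first term I invoke the bound $\|W^j_{\lambda,\mu,\omega,\nu}(t)\|_{L^{p'}} \le \frac{M}{2d}$ from \eqref{eq:Mikado-bound-Lp} together with $\|b_j(t)\|_{L^{p'}} \le \eta^{-1}\|R_0(t)\|_{L^1}^{1/p'}$ from \eqref{eq:aj-bj-lp}; for the second term I use the same $L^{p'}$ bound on $W^j$ plus $\|b_j\|_{C^1} \le C(\eta,\delta,\|R_0\|_{C^1})$ from \eqref{eq:aj-bj:derivatives}, which is one of the constants absorbed into $C$. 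This yields
\[
\|b_j(t) W^j(t)\|_{L^{p'}} \le \frac{M}{2d\eta}\|R_0(t)\|_{L^1}^{1/p'} + \frac{C}{\lambda^{1/p'}}.
\]

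Finally I sum over $j = 1,\dots,d$; the first term sums to $\frac{M}{2\eta}\|R_0(t)\|_{L^1}^{1/p'}$ and the $d$ error terms combine into a single constant $C/\lambda^{1/p'}$, giving \eqref{eq:w-in-Lp'}. There is no real obstacle here: the only point requiring a moment's care is the periodicity bookkeeping that lets one treat $W^j(t)$ as the $\lambda$-dilation of a fixed profile so that \eqref{eq:improvedH} applies with the gain $\lambda^{-1/p'}$, and this is exactly the same observation already used in Lemma~\ref{lem:thetaLp}. The cut-off $\chi_j$ plays no role in the estimate beyond ensuring $b_j$ is smooth, which is already built into \eqref{eq:aj-bj:derivatives}.
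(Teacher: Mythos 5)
Your proposal is correct and matches the paper's approach exactly: the paper itself gives no new argument for this lemma, simply noting it is ``completely analog'' to Lemma~\ref{lem:thetaLp}, and your proof is precisely that analog --- improved H\"older \eqref{eq:improvedH} with $f = b_j(t)$, $g_\lambda = W^j(t)$ (valid since $\nu\in\lambda\N$ makes $W^j(t)$ a $\lambda$-dilation), then \eqref{eq:Mikado-bound-Lp}, \eqref{eq:aj-bj-lp}, \eqref{eq:aj-bj:derivatives}, and summation over $j$.
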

\begin{proof}
	The proof is completely analog to the one of \eqref{eq:theta-in-Lp} and is thus omitted.
\end{proof}

\begin{lem}[$ w $ small in $ W^{1,\tilde{p}} $]
It holds
	\begin{equation}
	\|w(t)\|_{W^{1,\tilde{p}}} \le  \frac{C(\lambda \mu+\nu)}{\mu^{1+\epsilon}}.
	\label{eq:w-in-W1p}
	\end{equation}
	\label{lem:wW1p}
\end{lem}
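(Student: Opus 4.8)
The plan is to reduce the estimate to the $W^{1,\tilde p}$ bound on the Mikado field already proved in \eqref{eq:Mikado-bound-WinW}, using the decomposition $w(t)=\sum_{j=1}^d b_j(t)\,W^j_{\lambda,\mu,\omega,\nu}(t)$ and the fact that the amplitudes $b_j$ are smooth with $C^1$ norm controlled \emph{independently} of the oscillation parameters $\lambda,\mu,\omega,\nu$. Concretely, I would first apply the Leibniz (product) rule in $W^{1,\tilde p}$: for each $j$,
\[
\big\|b_j(t)\,W^j(t)\big\|_{W^{1,\tilde p}}
\le \|b_j(t)\|_{C^0}\|W^j(t)\|_{L^{\tilde p}}
+ \|\nabla b_j(t)\|_{C^0}\|W^j(t)\|_{L^{\tilde p}}
+ \|b_j(t)\|_{C^0}\|\nabla W^j(t)\|_{L^{\tilde p}}
\le \|b_j\|_{C^1}\,\|W^j(t)\|_{W^{1,\tilde p}},
\]
which is valid for every $\tilde p\in[1,\infty)$ since $b_j$ is smooth (in particular bounded together with its first derivatives).

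Next I would invoke \eqref{eq:aj-bj:derivatives} with $k=1$ to bound $\|b_j\|_{C^1}\le C(\eta,\delta,\|R_0\|_{C^1})$, a constant of the type denoted $C$ in the convention of the preceding remark (independent of $\lambda,\mu,\omega,\nu$), and \eqref{eq:Mikado-bound-WinW} to bound $\|W^j(t)\|_{W^{1,\tilde p}}\le M(\lambda\mu+\nu)/\mu^{1+\epsilon}$. Combining these two facts gives $\|b_j(t)W^j(t)\|_{W^{1,\tilde p}}\le C(\lambda\mu+\nu)/\mu^{1+\epsilon}$ for each $j$, and summing over the $d$ values of $j$ (absorbing the factor $d$ into $C$) yields exactly \eqref{eq:w-in-W1p}.

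I do not expect any genuine obstacle here: the only point that requires attention is that the entire dependence on the parameters $\lambda,\mu,\nu$ is carried by the Mikado factor $W^j$, while the amplitude $b_j$ contributes only a parameter-independent constant — this is precisely the content of \eqref{eq:aj-bj:derivatives}, and once \eqref{eq:Mikado-bound-WinW} is in hand the estimate is essentially immediate. (The analogous argument for $\vartheta$ and $q$ in the preceding lemmas proceeds along the same lines.)
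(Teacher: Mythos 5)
Your proposal is correct and follows essentially the same route as the paper: decompose $w=\sum_j b_j W^j$, use the product rule to pull out $\|b_j\|_{C^1}$, bound it by the parameter-independent constant from \eqref{eq:aj-bj:derivatives}, and conclude via \eqref{eq:Mikado-bound-WinW}. You merely spell out the intermediate Leibniz step that the paper compresses into a single line.
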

\begin{proof}
	We only use Hölder together with \eqref{eq:Mikado-bound-WinW} and \eqref{eq:aj-bj:derivatives} and obtain
	\begin{align*}
		\|w(t)\|_{W^{1,\tilde{p}}} &\le \sum_j \left\| b_j(t) W^j(t) \right\|_{W^{1,\tilde{p}}} \\
		&\le \sum_j \left\| b_j \right\|_{C^1} \left\| W^j(t)\right\|_{W^{1,\tilde{p}}} \\
		&\le  \frac{C(\lambda \mu+\nu)}{\mu^{1+\epsilon}}. \qedhere
	\end{align*}
\end{proof}

\begin{lem}[Estimates on $f_j$]
\label{lem:fj}
For every $k, h \in \N$ and $r \in [1, \infty]$
\begin{equation*}
\left\| \D^k D^h f_j(t) \right\|_{L^r}  \leq  C(\lambda\mu)^{k+h+1} \mu^{b - d/r}. 
\end{equation*}
\end{lem}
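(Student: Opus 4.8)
The plan is to estimate $f_j(t) = \nabla\bigl(a_j(t)\,(\tilde\varphi_\mu^j)_\lambda\circ\tau_{\omega t e_j}\bigr)$ directly by expanding the gradient with the Leibniz rule, so that $f_j$ is a sum of two terms: one in which the derivative hits the smooth coefficient $a_j$, and one in which it hits the highly oscillating factor $(\tilde\varphi_\mu^j)_\lambda$. After applying $\D^k D^h$ and expanding fully, we obtain a finite sum of terms of the shape $(\text{derivatives of }a_j)\cdot(\text{derivatives of }(\tilde\varphi_\mu^j)_\lambda\circ\tau_{\omega t e_j})$; the operators $\D^k$ and $D^h$ together contribute at most $k+h+1$ derivatives (the extra $+1$ coming from the gradient in the definition of $f_j$). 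First I would note that $\D^k$ acting on the product can be handled because $\D^k$ is, up to the inverse Laplacian when $k$ is even, a differential operator; but more carefully, since we only need an $L^r$ bound and not a pointwise identity, the cleanest route is to observe $\D^k$ maps $L^r$ boundedly into $W^{-k',r}$-type spaces — however, to avoid that subtlety, I would instead use that, on the relevant functions, $\D^k D^h$ can be bounded by a Calderón–Zygmund-type estimate (Lemma~\ref{lem:C-Z-classic} and its consequences) reducing everything to the $L^r$ norm of $D^{k+h}$ applied to $f_j$, losing only a constant.

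Concretely, the key steps in order: (1) write $\D^k D^h f_j = \D^k D^{h+1}\bigl(a_j (\tilde\varphi_\mu^j)_\lambda\circ\tau_{\omega t e_j}\bigr)$ and, using the mapping properties of $\D^k$ together with Lemma~\ref{lem:antiderivative-est}/\ref{lem:C-Z-classic}, bound its $L^r$ norm by $C\,\|D^{k+h+1}\bigl(a_j (\tilde\varphi_\mu^j)_\lambda\circ\tau_{\omega t e_j}\bigr)\|_{L^r}$ (the case $k$ even, where $\D^k=\Delta^{k/2}$, is immediate; the case $k$ odd follows since $\nabla$ adds one order); (2) apply the Leibniz rule to distribute the $k+h+1$ derivatives over the product $a_j\cdot(\tilde\varphi_\mu^j)_\lambda\circ\tau_{\omega t e_j}$, getting a sum over $0\le \ell\le k+h+1$ of terms $D^{k+h+1-\ell}a_j \cdot D^\ell\bigl((\tilde\varphi_\mu^j)_\lambda\circ\tau_{\omega t e_j}\bigr)$; (3) bound $\|D^{k+h+1-\ell}a_j\|_{C^0}$ by the constant $C=C(\eta,\delta,\|R_0\|_{C^{k+h+1}})$ from \eqref{eq:aj-bj:derivatives}, and bound $\|D^\ell\bigl((\tilde\varphi_\mu^j)_\lambda\circ\tau_{\omega t e_j}\bigr)\|_{L^r}$ using translation-invariance of $L^r$ norms, the scaling identity \eqref{eq:osc}, and the scaling \eqref{eq:phi-j-mu-scaling} of $\tilde\varphi_\mu^j$, which gives $\lambda^\ell\,\mu^{b-d/r+\ell}\|D^\ell\varphi\|_{L^r}\le C(\lambda\mu)^\ell\mu^{b-d/r}$ (using $\lambda,\mu\ge1$ to absorb $(\lambda\mu)^\ell\le(\lambda\mu)^{k+h+1}$); (4) sum the finitely many terms, absorbing $(\lambda\mu)^\ell\le(\lambda\mu)^{k+h+1}$ and all $\varphi$-derivative norms up to order $k+h+1$ into the constant $C$.

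I expect the main (and really only) technical obstacle to be step (1): justifying that $\|\D^k g\|_{L^r}\le C\|D^{k}g\|_{L^r}$ for the functions $g$ at hand, since $\D^k$ for even $k$ is $\Delta^{k/2}$ which is controlled by $W^{k,r}$ via Calderón–Zygmund (Lemma~\ref{lem:C-Z-classic}), and for odd $k$ one has the extra gradient, so $\|\D^k g\|_{L^r}=\|\nabla\Delta^{(k-1)/2}g\|_{L^r}\le\|\Delta^{(k-1)/2}g\|_{W^{1,r}}\le C\|g\|_{W^{k,r}}$; care is needed at $r=1$ and $r=\infty$, where one instead invokes the end-point estimates (Lemma~\ref{lem:antiderivative-end}) or simply notes that for the concrete product functions appearing here one can expand $\D^k$ first and then estimate each resulting term in $L^r$ for all $r\in[1,\infty]$ without any singular integral, at the cost of only losing a dimensional constant. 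Everything else is the routine Leibniz-and-scaling bookkeeping sketched above.
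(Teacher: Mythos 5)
Your steps (2)--(4) — Leibniz on the product, translation invariance, and the scaling identities \eqref{eq:osc} and \eqref{eq:phi-j-mu-scaling} together with $\lambda,\mu\ge 1$ to absorb powers — are exactly the paper's bookkeeping, and they are correct. The issue is step (1), which as written rests on a misconception about $\D^k$. For $k\in\N$ (so $k\ge 0$, as in the statement), $\D^k$ is a \emph{constant-coefficient differential operator} of order $k$: $\Delta^{k/2}$ when $k$ is even and $\nabla\Delta^{(k-1)/2}$ when $k$ is odd, with no inverse Laplacian appearing anywhere. Hence $\D^k D^h f_j$ is, pointwise, a finite integer-coefficient linear combination of partial derivatives of $f_j$ of total order $k+h$, and
\[
\|\D^k D^h f_j\|_{L^r}\le C_{d,k,h}\,\|f_j\|_{W^{k+h,r}}\le C_{d,k,h}\,\bigl\|a_j\,(\tilde\varphi_\mu^j)_\lambda\circ\tau_{\omega t e_j}\bigr\|_{W^{k+h+1,r}}
\]
holds for \emph{every} $r\in[1,\infty]$ simply by the triangle inequality. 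There is nothing to justify via Calder\'on--Zygmund: the CZ inequality \eqref{eq:C-Z-torus} bounds $\|g\|_{W^{2,r}}$ by $\|\Delta g\|_{L^r}$, i.e.\ it goes in the \emph{opposite} direction, and indeed fails at $r=1,\infty$; invoking it (or the antiderivative estimates Lemma~\ref{lem:antiderivative-est}/\ref{lem:antiderivative-end}, which concern $\D^{-k}$, not $\D^k$) is both unnecessary and misleading here. Your phrase ``$\D^k$ is, up to the inverse Laplacian when $k$ is even, a differential operator'' is simply wrong for $k\ge 0$. You do reach the correct observation in your closing sentence (``expand $\D^k$ first and then estimate each resulting term in $L^r$ \dots without any singular integral''), and this is precisely what the paper does: after that trivial reduction, the rest of your argument goes through verbatim. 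I'd recommend deleting the CZ detour entirely and stating the reduction as the elementary fact it is.
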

\begin{proof}
Recalling the definition of $f_j$ in \eqref{eq:fj}, we have
	\begin{align*}
		\|\D^k D^h f_j(t)\|_{L^{r}} 
		& \leq \|f_j(t)\|_{W^{k+h,r}} \\
		& \leq \left\|a_j(t) \left( (\tilde \varphi_\mu^j)_\lambda \circ \tau_{\omega t e_j} \right) \right\|_{W^{k+h+1, r}} \\
		& \leq C \|a_j\|_{C^{k+h+1}} \|(\tilde \varphi_\mu^j)_\lambda\|_{W^{k+h+1,r}} \\
		\text{(by \eqref{eq:aj-bj:derivatives})}
		& \leq C \lambda^{k+h+1} \|\tilde \varphi_\mu^j\|_{W^{k+h+1, r}} \\
		\text{(by \eqref{eq:phi-j-mu-scaling})}		
		&\le C (\lambda\mu)^{k+h+1} \mu^{b - d/r}.  \qedhere
	\end{align*}
\end{proof}

\begin{lem}[$w_c$ small in $L^{p'}$]
It holds
	\begin{equation}
	\|w_c(t)\|_{L^{p'}} \le C \left( \sum_{k=1}^N \left(\frac{\lambda \mu}{\nu}\right)^k +\frac{(\lambda \mu)^{N+1}}{\nu^N} \right).
	\label{eq:wC-in-Lp'}
	\end{equation}
	\label{lem:wcLp'}
\end{lem}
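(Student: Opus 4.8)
The plan is to estimate $w_c(t) = -\sum_j \adiv_N(f_j(t), \psi^j_\nu)$ directly by applying the refined antidivergence bound \eqref{eq:antidiv-in-p} to each summand, with the fast-oscillating factor being $\psi^j_\nu$, i.e.\ with oscillation parameter $\nu$. Since $\psi^j_\nu$ is, up to a dilation by $\nu$, just the fixed function $\psi^j$, we have $\|\psi^j_\nu\|_{L^\infty} = \|\psi^j\|_{L^\infty} \le C$, so \eqref{eq:antidiv-in-p} applied with exponent $p'$ gives
\[
\|\adiv_N(f_j(t), \psi^j_\nu)\|_{L^{p'}} \le C \left( \sum_{k=0}^{N-1} \nu^{-k-1} \|\D^k f_j(t)\|_{L^{p'}} + \nu^{-N} \|\D^N f_j(t)\|_{L^{p'}} \right).
\]
The second step is to insert the estimate on the (anti)derivatives of $f_j$ from Lemma~\ref{lem:fj}, taken with $h = 0$, $r = p'$, which yields $\|\D^k f_j(t)\|_{L^{p'}} \le C (\lambda\mu)^{k+1} \mu^{b - d/p'}$. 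Recalling from \eqref{eq:def-a-b} that $b = d/p'$, the factor $\mu^{b - d/p'}$ equals $1$, so $\|\D^k f_j(t)\|_{L^{p'}} \le C (\lambda\mu)^{k+1}$.

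Substituting this into the antidivergence bound, the $k$-th term in the sum becomes $C \nu^{-k-1} (\lambda\mu)^{k+1} = C (\lambda\mu/\nu)^{k+1}$, and the remainder term becomes $C \nu^{-N}(\lambda\mu)^{N+1} = C (\lambda\mu)^{N+1}/\nu^N$. Summing over $k$ from $0$ to $N-1$ gives $\sum_{k=1}^{N} (\lambda\mu/\nu)^k$; one may absorb the $k=N$ term of this sum and the separate remainder term into the stated expression $\sum_{k=1}^N (\lambda\mu/\nu)^k + (\lambda\mu)^{N+1}/\nu^N$ (note $(\lambda\mu/\nu)^N \le (\lambda\mu)^{N+1}/\nu^N$ when $\lambda\mu \ge 1$, or simply keep the slightly larger bound). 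Finally, summing over the finitely many indices $j = 1, \dots, d$ changes only the constant, giving \eqref{eq:wC-in-Lp'}.

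There is no real obstacle here: the lemma is a routine combination of the already-established abstract antidivergence estimate \eqref{eq:antidiv-in-p} with the scaling computation of Lemma~\ref{lem:fj}. The only point requiring a moment's care is bookkeeping the two competing oscillation parameters --- the antidivergence is taken with respect to the $\nu$-oscillation of $\psi^j_\nu$, while $f_j$ itself carries a $\lambda\mu$-scaling in its derivatives --- and checking that the exponent $b - d/p'$ vanishes by the definition of $b$, so that no stray power of $\mu$ survives. Once that is observed, the stated bound follows immediately.
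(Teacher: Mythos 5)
Your proof is correct and follows exactly the same route as the paper: apply \eqref{eq:antidiv-in-p} to the definition of $w_c$ with fast-oscillation parameter $\nu$, then substitute Lemma~\ref{lem:fj} with $h=0$, $r=p'$ and use $b=d/p'$ to kill the stray power of $\mu$. One small remark: after reindexing, the sum $\sum_{k=0}^{N-1}(\lambda\mu/\nu)^{k+1}$ plus the remainder $(\lambda\mu)^{N+1}/\nu^N$ is already \emph{exactly} the expression in \eqref{eq:wC-in-Lp'}, so the closing comment about ``absorbing'' terms is unnecessary.
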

\begin{proof}
Applying \eqref{eq:antidiv-in-p} to the definition \eqref{eq:wc-definition} of $ w_c $ we immediately obtain
	\begin{align*}
		\|w_c(t)\|_{L^{p'}} &\le \sum_j C \|\psi\|_{L^\infty} \left(\sum_{k=0}^{N-1} \frac{\|\D^kf_j(t)\|_{L^{p'}}}{\nu^{k+1}} + \frac{\|\D^Nf_j(t)\|_{L^{p'}}}{\nu^N}\right).
	\end{align*}
The conclusion follows applying Lemma \ref{lem:fj} with $h=0$, $r = p'$ and recalling that $b = d/p'$.
%	where $ f_j $ is used as a shorthand for the first argument in the definition of $ w_c $, i.e.\ 
%	\begin{equation}
%	\label{eq:fj}
%	f_j\coloneqq \nabla \left( \chi_j|R_0^j|^{1/p'} \left( \tilde{\varphi}^j_\mu\right)_\lambda \right) \cdot e_j.
%	\end{equation}		
%	Now observe that, for any $ k\le N $,
%	\begin{align*}
%		\|\D^k f_j\|_{L^{p'}} 
%		& \leq \|f_j\|_{W^{k,p'}} \\
%		& \leq \left\|a_j \left( (\tilde \varphi_\mu^j)_\lambda \circ \tau_{\omega t e_j} \right) \right\|_{W^{k+1, p'}} \\
%		& \leq C(N) \|a_j\|_{C^{k+1}} \|(\tilde \varphi_\mu^j)_\lambda\|_{W^{k+1,p'}} \\
%		& \leq C(\eta, \delta, N, \|R_0\|_{C^{N+1}}) \lambda^{k+1} \|\tilde \varphi_\mu^j\|_{W^{k+1, p'}} \\
%		&\le C(\eta, \delta, N, \|R_0\|_{C^{N+1}}) (\lambda\mu)^{k+1}
%	\end{align*}
%	where the last line follows from \eqref{eq:phi_mu-scaling}.
%	Deploying this in the above estimate proves the lemma.
\end{proof}

\begin{lem}[$w_c$ small in $W^{1,\tilde{p}}$]
It holds
	\begin{equation}
	\|w_c(t)\|_{W^{1,\tilde{p}}} \le C \frac{\lambda\mu+\nu}{\mu^{1+\epsilon}} \left(\sum_{k=1}^N \left(\frac{\lambda\mu}{\nu}\right)^k + \frac{(\lambda\mu)^{N+1}}{\nu^N}\right).
	\label{eq:wC-in-W1p}
	\end{equation}
	\label{lem:wcW1p}
\end{lem}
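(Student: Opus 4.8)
The plan is to differentiate the definition \eqref{eq:wc-definition} of $w_c$ using the Leibniz rule for $\adiv_N$ (part (ii) of Lemma \ref{lem:adiv-properties}) and then estimate the resulting terms exactly as in the proof of Lemma \ref{lem:wcLp'}, but now measuring $f_j$ and its first space derivative in $L^{\tilde p}$ instead of $L^{p'}$. First I would write, for each $j$,
\[
\partial_i w_c(t) = -\sum_j \Big( \adiv_N(\partial_i f_j(t), \psi^j_\nu) + \adiv_N(f_j(t), \partial_i \psi^j_\nu) \Big),
\]
so that $\|w_c(t)\|_{W^{1,\tilde p}} \le \|w_c(t)\|_{L^{\tilde p}} + \sum_{i,j}\|\partial_i w_c(t)\|_{L^{\tilde p}}$ is controlled by three kinds of contributions: $\adiv_N(f_j,\psi^j_\nu)$, $\adiv_N(\partial_i f_j,\psi^j_\nu)$, and $\adiv_N(f_j,\partial_i\psi^j_\nu)$. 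Note $\partial_i\psi^j_\nu = \nu\,(\partial_i\psi^j)_\nu$ is again $\nu$-oscillating with $\|\partial_i\psi^j_\nu\|_{L^\infty}\le \nu\|D\psi\|_{L^\infty}$, and $\psi^j_\nu,\partial_i\psi^j_\nu$ have zero mean so $\adiv_N$ applies.

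Next I would apply estimate \eqref{eq:antidiv-in-p} to each term. For $\adiv_N(f_j,\psi^j_\nu)$ this gives $C\|\psi\|_{L^\infty}\big(\sum_{k=0}^{N-1}\nu^{-k-1}\|\D^k f_j(t)\|_{L^{\tilde p}} + \nu^{-N}\|\D^N f_j(t)\|_{L^{\tilde p}}\big)$; for $\adiv_N(\partial_i f_j,\psi^j_\nu)$ the same with $f_j$ replaced by $\partial_i f_j$; and for $\adiv_N(f_j,\partial_i\psi^j_\nu)$ the same with an extra factor $\nu\|D\psi\|_{L^\infty}$. Now I invoke Lemma \ref{lem:fj} with $r=\tilde p$: $\|\D^k D^h f_j(t)\|_{L^{\tilde p}}\le C(\lambda\mu)^{k+h+1}\mu^{b-d/\tilde p}$. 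Recalling $b=d/p'$ and the definition \eqref{eq:def-epsilon} of $\epsilon$, which gives $d/\tilde p = d/p' + 1 + \epsilon$, we get $\mu^{b-d/\tilde p} = \mu^{-1-\epsilon}$. Hence each $\|\D^k f_j(t)\|_{L^{\tilde p}} \le C(\lambda\mu)^{k+1}\mu^{-1-\epsilon}$ and $\|\D^k \partial_i f_j(t)\|_{L^{\tilde p}} \le C(\lambda\mu)^{k+2}\mu^{-1-\epsilon}$.

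Plugging these in, the $\adiv_N(f_j,\psi^j_\nu)$ term contributes $\le C\mu^{-1-\epsilon}\big(\sum_{k=0}^{N-1}\nu^{-k-1}(\lambda\mu)^{k+1} + \nu^{-N}(\lambda\mu)^{N+1}\big) = C\mu^{-1-\epsilon}\big(\sum_{k=1}^{N}(\lambda\mu/\nu)^k + (\lambda\mu)^{N+1}/\nu^N\big)$; the $\adiv_N(\partial_i f_j,\psi^j_\nu)$ term contributes an extra $\lambda\mu$, giving $\le C\lambda\mu\,\mu^{-1-\epsilon}(\sum_{k=1}^N(\lambda\mu/\nu)^k + (\lambda\mu)^{N+1}/\nu^N)$ (after relabelling, the $\nu^{-N}(\lambda\mu)^{N+2}$ tail is absorbed since $\nu\in\lambda\N$ and $\mu\gg\lambda$, or one simply keeps it and bounds it by the displayed right-hand side); and the $\adiv_N(f_j,\partial_i\psi^j_\nu)$ term contributes $\le C\nu\,\mu^{-1-\epsilon}(\sum_{k=1}^N(\lambda\mu/\nu)^k + (\lambda\mu)^{N+1}/\nu^N)$. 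Adding the three, the prefactor becomes $C(\lambda\mu + \nu)\mu^{-1-\epsilon}$ — the term $1$ from the $\adiv_N(f_j,\psi^j_\nu)$ contribution being harmless against $\lambda\mu+\nu$ since $\lambda,\nu\ge 1$ — which is exactly \eqref{eq:wC-in-W1p}. The only mildly delicate point is the bookkeeping of the geometric sums: one must check that shifting the summation index by one (from the extra $\lambda\mu$ factors) reproduces the same bracket $\sum_{k=1}^N(\lambda\mu/\nu)^k + (\lambda\mu)^{N+1}/\nu^N$ up to the constant $C$, which holds because each new top-order term $(\lambda\mu)^{N+2}/\nu^{N+1}\cdot\nu = (\lambda\mu)^{N+2}/\nu^N$ is $\lambda\mu$ times the retained tail and is therefore dominated after pulling $\lambda\mu$ into the overall prefactor; no genuine obstacle arises, the estimate being a routine consequence of Lemmas \ref{lem:adiv-properties}, \ref{lem:fj} and the arithmetic identity $d/\tilde p - b = 1+\epsilon$.
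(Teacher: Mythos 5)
Your proof is correct and follows essentially the same route as the paper: apply the Leibniz rule for $\adiv_N$ (Lemma~\ref{lem:adiv-properties}(ii)) to split $D w_c$ into $\adiv_N(Df_j,\psi^j_\nu)$ and $\adiv_N(f_j,D\psi^j_\nu)$, bound each via \eqref{eq:antidiv-in-p} and Lemma~\ref{lem:fj} with $r=\tilde p$, and use $\mu^{b-d/\tilde p}=\mu^{-(1+\epsilon)}$. Your cautious remark about the index shift is harmless but unnecessary: the extra factor $\lambda\mu$ from $\|\D^k Df_j\|$ factors out exactly, with no tail term needing absorption.
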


\begin{proof}
	We will only estimate $ \|Dw_c(t)\|_{L^{\tilde{p}}} $ as the estimate on $\|w_c(t)\|_{L^{\tilde{p}}}$ is very similar to the proof of the previous lemma (we just gain a factor of $ \mu^{-(1+\epsilon)} $ because of the integrability of $ \tilde{\varphi}_\mu $). 	By statement~(ii) of Lemma~\ref{lem:adiv-properties} we can split $ Dw_c $ into:
	\begin{align*}
		Dw_c(t) &= -\sum_j D\adiv_N \left( f_j(t), \psi_\nu^j \right)  = - \sum_j \adiv_N \left( D f_j(t), \psi_\nu^j \right) + \adiv_N \left( f_j, D \left( \psi_\nu^j \right) \right).
	\end{align*}
Both terms can now be estimated analog to the previous lemma by application of \eqref{eq:antidiv-in-p}, resulting in (the constant may change from line to line)
	\begin{align*}
		\|Dw_c(t)\|_{L^{\tilde{p}}} &\le C\sum_j \Bigg[ \|\psi\|_{L^\infty} \left(\sum_{k=0}^{N-1} \frac{\|\D^k Df_j(t)\|_{L^{\tilde p}}}{\nu^{k+1}} + \frac{\|\D^N Df_j(t)\|_{L^{\tilde p}}}{\nu^N}\right) \\
		&\hspace{6em} + \|D\psi_\nu\|_{L^\infty} \left(\sum_{k=0}^{N-1} \frac{\|\D^kf_j(t)\|_{L^{\tilde p}}}{\nu^{k+1}} + \frac{\|\D^Nf_j(t)\|_{L^{\tilde p}}}{\nu^N}\right)\Bigg] \\
%		&\le C \sum_j \Bigg[ \|\chi_j^2 R_0\|_{C^{N+2}} \left( \sum_{k=1}^N \frac{\|(\tilde{\varphi}_\mu^j)_\lambda\|_{W^{k+1,\tilde{p}}}}{\nu^k} + \frac{\|(\tilde{\varphi}_\mu^j)_\lambda\|_{W^{N+2,\tilde{p}}}}{\nu^N} \right) \\
%		&\hspace{6em}+\nu \|\chi_j^2 R_0\|_{C^{N+1}} \left( \sum_{k=1}^N \frac{\|(\tilde{\varphi}_\mu^j)_\lambda\|_{W^{k,\tilde{p}}}}{\nu^k} + \frac{\|(\tilde{\varphi}_\mu^j)_\lambda\|_{W^{N+1,\tilde{p}}}}{\nu^N} \right)\Bigg] \\
		\text{(by Lemma \ref{lem:fj})}\quad
		&\le C \Bigg[ \mu^{d/p'-d/\tilde{p}} \left(\sum_{k=1}^N \frac{(\lambda\mu)^{k+1}}{\nu^k} + \frac{(\lambda\mu)^{N+2}}{\nu^N}\right) \\
		&\hspace{10em} + \nu \mu^{d/p'-d/\tilde{p}} \left( \sum_{k=1}^N \frac{(\lambda\mu)^k}{\nu^k} + \frac{(\lambda\mu)^{N+1}}{\nu^N} \right) \Bigg] \\
		&= C\frac{\lambda\mu+\nu}{\mu^{1+\epsilon}} \left(\sum_{k=1}^N \left(\frac{\lambda\mu}{\nu}\right)^k + \frac{(\lambda\mu)^{N+1}}{\nu^N}\right). \qedhere
	\end{align*}
\end{proof}

\section{The new defect field}
\label{sec:error}

\subsection{Definition of  $R_1$}

Given the perturbations defined in the previous section we now have to find a vector field $ R_1 $ so that $ (\rho_1,u_1,R_1) $ solve \eqref{eq:cont-defect} on $ [0,T]\times\Td $. This is achieved basically by taking the antidivergence of the left hand side of \eqref{eq:cont-defect}, but as we want to show that $ R_1 $ can be chosen arbitrarily small in $ L^1 $ in order to prove \eqref{eq:R1}, we need to be careful about the exact form of the antidivergence. Therefore, decompose the left hand side of \eqref{eq:cont-defect} as
\begin{align}
-\div R_1 &= \partial_t \rho_1 + \div(\rho_1 u_1) \nonumber \\
&=\underbrace{\partial_t \rho_0 + \div(\rho_0 u_0)}_{\mathrlap{=-\div R_0}} + \partial_t(\vartheta+\vartheta_c+q+q_c) + \div(\rho_0 (w+w_c)) + \div((\vartheta+q) u_0) \nonumber\\
 &\qquad + \div((\vartheta+q)(w+w_c)) + \underbrace{(\vartheta_c+q_c)\div((u_0+w+w_c))}_{\mathrlap{= 0 \text{ by def.~of } w_c}} \nonumber\\
 \begin{split}
 &= \partial_t (q +q_c)  + \div(\vartheta w-R_0) \\
 &\qquad + \partial_t (\vartheta + \vartheta_c) + \div(\rho_0 w + \vartheta u_0) \\
 &\qquad + \div (q(u_0+w)) \\
 &\qquad + \div ((\rho_0+\vartheta+q)w_c).
 \label{eq:divR1}
 \end{split}
\end{align}

In the next sections we analyze each line in \eqref{eq:divR1} separately. In particular we will define and estimate $R^{\chi}$ (in \eqref{eq:R-chi}), $R^{\mathrm{time}, 1}$ (in \eqref{eq:R-time1}), $R^{\mathrm{quadr}}$ (in \eqref{eq:R:quadr}), $R^{\mathrm{lin}}$ (in \eqref{eq:R-lin}), $R^{\mathrm{time}, 2}$ (in \eqref{eq:R-time2}), $R^q$ (in \eqref{eq:Rq}), $R^{\mathrm{corr}}$ (in \eqref{eq:Rcorr}), so that
\begin{equation*}
\begin{aligned}
\partial_t (q +q_c)  + \div(\vartheta w-R_0) & = \div R^{\mathrm{time}, 1} + \div R^{\mathrm{quadr}} + \div R^{\chi},   \\
\partial_t (\vartheta + \vartheta_c) + \div(\rho_0 w + \vartheta u_0) & =  \div R^{\mathrm{time}, 2}+ \div R^{\mathrm{lin}},  \\
\div (q(u_0+w)) & = \div R^q, \\
\div ((\rho_0+\vartheta+q)w_c) & = \div R^{\mathrm{corr}},
\end{aligned}
\end{equation*}
and thus
\begin{equation*}
- \div R_1 = \partial_t \rho_1 + \div (\rho_1 u_1)
\end{equation*}
for
\begin{equation*}
- R_1 :=  R^{\mathrm{time}, 1} +  R^{\mathrm{quadr}} + R^{\chi}  + R^{\mathrm{time}, 2}+  R^{\mathrm{lin}} + R^q + R^{\mathrm{corr}}. 
\end{equation*}

\subsection{Analysis of the first line in \eqref{eq:divR1}}

We write
\begin{equation*}
R_0 = \sum_j R_0^j e_j = \sum_j (1 - \chi_j^2) R_0^j e_j + \sum_j \chi_j^2 R_0^j e_j
\end{equation*}
and thus
\begin{equation*}
\begin{split}
- \div R_0 	& = \div \bigg(  R^{\chi} -  \sum_j \chi_j^2 R_0^j e_j \bigg) \\
					& = \div R^\chi - \sum_j \nabla (\chi_j^2 R_0^j) \cdot e_j \\
					& =  \div R^{\chi} -  \sum_j \nabla ( a_j b_j ) \cdot e_j
\end{split}
\end{equation*}
where we set
\begin{equation}
\label{eq:R-chi}
R^\chi : = - \sum_j (1 - \chi_j^2) R_0^j e_j.
\end{equation}
Observe now that, because of \eqref{eq:Mikado-disjoint-support}, 
\begin{equation*}
\vartheta w = \sum_j a_j b_j \Theta^j W^j = \sum_j \chi_j^2 R_0^j \Theta^j W^j.
\end{equation*}
Therefore 
\begin{equation*}
\begin{aligned}
\div ( \vartheta w )&  = \sum_j a_j b_j \div(\Theta^j W^j) + \nabla (a_j b_j) \cdot  \Theta^j W^j \\
\end{aligned}
\end{equation*}
and thus
\begin{equation}
\label{eq:div:thetaw:minus:R}
\begin{aligned}
 \div  (\vartheta w - R_0) 
& =  \sum_j a_j b_j \div(\Theta^j W^j) + \nabla (a_j b_j) \cdot  \Theta^j W^j + \div R^\chi -  \sum_j \nabla( a_j b_j ) \cdot e_j \\
& =   \sum_j a_j b_j \div(\Theta^j W^j) + \nabla (a_j b_j) \cdot  \big[  \Theta^j W^j  - e_j \big] + \div R^\chi \\
& =  \sum_j a_j b_j \div(\Theta^j W^j) \\
& \qquad + \bigg(  \nabla (a_j b_j) \cdot  \big[  \Theta^j W^j  - e_j \big]  - \fint  \nabla (a_j b_j) \cdot  \big[  \Theta^j W^j  - e_j \big]  \bigg) \\
& \qquad +  \fint  \nabla (a_j b_j) \cdot  \big[  \Theta^j W^j  - e_j \big] \\ 
& \qquad + \div R^\chi.
\end{aligned}
\end{equation}
On the other side
\begin{equation}
\label{eq:partial:q}
\begin{aligned}
\partial_t & (q +q_c)  \\
& = \sum_j a_j b_j \partial_t Q^j +  \partial_t (a_j b_j) \, Q^j + q_c' \\
& = \sum_j a_j b_j \partial_t Q^j  + \bigg(  \partial_t (a_j b_j) \, Q^j - \fint  \partial_t (a_j b_j) \, Q^j \bigg) + \bigg( \fint  \partial_t (a_j b_j) \, Q^j + q_c' \bigg).
\end{aligned}
\end{equation}
Putting together \eqref{eq:div:thetaw:minus:R} and \eqref{eq:partial:q} we get
\begin{equation*}
\begin{aligned}
\partial_t (q +q_c)   + \div (\vartheta w - R_0)  & = \sum_j a_j b_j \underbrace{\big[  \partial_t Q^j + \div (\Theta^j W^j) \big]}_{= 0 \text{ by \eqref{eq:Mikado-cancellation}}} \\
& \qquad + \bigg(  \partial_t (a_j b_j) \, Q^j - \fint  \partial_t (a_j b_j) \, Q^j \bigg) \\
& \qquad + \bigg(  \nabla (a_j b_j) \cdot  \big[  \Theta^j W^j  - e_j \big]  - \fint  \nabla (a_j b_j) \cdot  \big[  \Theta^j W^j  - e_j \big]  \bigg) \\
& \qquad + \div R^\chi  \\
& \qquad + \underbrace{ \fint  \partial_t (a_j b_j) \, Q^j + q_c' +  \fint  \nabla (a_j b_j) \cdot  \big[  \Theta^j W^j  - e_j \big]}_{\substack{= 0, \text{ as the l.h.s. has zero mean value} \\ \text{and each other line in the r.h.s. has zero mean value}}} \\
& = \sum_j \bigg(  \partial_t (a_j b_j) \, Q^j - \fint  \partial_t (a_j b_j) \, Q^j \bigg) \\
& \qquad + \bigg(  \nabla (a_j b_j) \cdot  \big[  \Theta^j W^j  - e_j \big]  - \fint  \nabla (a_j b_j) \cdot  \big[  \Theta^j W^j  - e_j \big]  \bigg) \\
& \qquad + \div R^\chi \\
& = \div R^{\mathrm{time}, 1} + \div R^{\mathrm{quadr}} + \div R^{\chi},
\end{aligned}
\end{equation*}
%\begin{equation}
%\label{eq:R1:line1}
%\begin{aligned}
%\partial_t (q +q_c)  + \div(\vartheta w-R_0) 
%& = \sum_j a_j b_j \partial_t Q^j +  \partial_t (a_j b_j) \, Q^j + q_c' \\
%& \qquad + a_j b_j \div (\Theta^j W^j) + \nabla(a_j b_j) \cdot \Theta^j W^j - \div R_0 \\
%& \text{(we sum and subtract $\div \sum_j \chi_j^2 R_0^j e_j$)} \\
%& = \sum_j a_j b_j \partial_t Q^j +  \partial_t (a_j b_j) \, Q^j + q_c' \\
%& \qquad + a_j b_j \div (\Theta^j W^j) + \nabla(a_j b_j) \cdot \Theta^j W^j  - \div \sum_j \chi_j^2 R_0^j e_j \\
%& \qquad + \div \sum_j \big( (\chi_j^2 - 1) R_0^j e_j \big) \\
%& \text{(we rearrange the terms and use that $a_j b_j = R_0^j$)} \\
%& = \sum_j a_j b_j \Big[ \partial_t Q^j + \div(\Theta^j W^j) \Big] \\
%& \qquad +  \partial_t (a_j b_j) \, Q^j + q_c' \\
%& \qquad  + \nabla R_0^j \cdot \Theta^j W^j - \nabla R_0^j \cdot e_j \\
%& \text{(we use \eqref{eq:Mikado-cancellation})}  \\
%& =  \partial_t (a_j b_j) \, Q^j + q_c' \\
%& \qquad  + \nabla R_0^j \cdot \big[  \Theta^j W^j - e_j \big]  \\
%& \text{(we sum and subtract the mean values)} \\
%& =  \partial_t (a_j b_j) \, Q^j - \fint_{\Td} \partial_t (a_j b_j)  \, Q^j \\
%& \qquad +    \nabla R_0^j \cdot \big[  \Theta^j W^j - e_j \big]  - \fint \nabla R_0^j \cdot \big[  \Theta^j W^j - e_j \big]  \\
%& \qquad   +  q_c'  +  \fint_{\Td} \partial_t (a_j b_j)   + \fint \nabla R_0^j \cdot \big[  \Theta^j W^j - e_j \big]  \\
%& = \div R^{\mathrm{time}, 1}  + \div R^{\mathrm{quadr}},
%\end{aligned}
%\end{equation}
where $R^{\mathrm{time}, 1}$ is defined by
\begin{equation}
\label{eq:R-time1}
R^{\mathrm{time}, 1} := \sum_j \left\{ \mathcal{D}^{-1} \bigg( \partial_t (a_j b_j) \, Q^j - \fint_{\Td} \partial_t (a_j b_j)  \, Q^j \bigg) \right\},
\end{equation}
and $R^{\mathrm{quadr}}$ is defined  in such a way that
\begin{equation}
\label{eq:div:r:quadr}
\begin{aligned}
\div R^{\mathrm{quadr}} & =  \sum_j \left\{ \nabla (a_j b_j) \cdot \big[  \Theta^j W^j - e_j \big] -  \fint \nabla R_0^j \cdot \big[  \Theta^j W^j - e_j \big] \right\}.
\end{aligned}
\end{equation}
as follows. 
%In the following computation (and only here), we explicitly write that the map $(\phi_\mu^j \tilde \phi_\mu^j)_\lambda$ is evaluated at the point $x - \omega t e_j$, or, in other words, that it is composed with the translation $\tau_{\omega t e_j}$, where, for any $y  \in \R^d$, the map $\tau_y : \Td \to \Td$ is defined by $\tau_y(x) := x - y$. 
We first compute
\begin{equation*}
\begin{aligned}
\nabla (a_j b_j) \cdot \big[  \Theta^j W^j - e_j \big] 
& = \nabla(a_j b_j) \cdot \left[  \left(\varphi_\mu^j \tilde{\varphi}_\mu^j\right)_\lambda \circ \tau_{\omega t e_j}  (\psi_\nu^j)^2 - 1 \right] e_j \\
& = \nabla(a_j b_j) \cdot  \Big[ (\varphi_\mu^j \tilde{\varphi}_\mu^j)_\lambda \circ \tau_{\omega t e_j}  \left((\psi_\nu^j)^2-1\right) + \big( (\varphi_\mu^j \tilde{\varphi}_\mu^j)_\lambda \circ \tau_{\omega t e_j}  -1 \big) \Big] e_j\\
& = \nabla(a_j b_j) \cdot  \Big[ (\varphi_\mu^j \tilde{\varphi}_\mu^j)_\lambda \circ \tau_{\omega t e_j}  \left((\psi^j)^2-1\right)_\nu + \big( \varphi_\mu^j \tilde{\varphi}_\mu^j -1\big)_\lambda \circ \tau_{\omega t e_j}  \Big] e_j \\
& = \partial_j (a_j b_j)   \Big[ (\varphi_\mu^j \tilde{\varphi}_\mu^j)_\lambda \circ \tau_{\omega t e_j}  \left((\psi^j)^2-1\right)_\nu + \big( \varphi_\mu^j \tilde{\varphi}_\mu^j -1\big)_\lambda \circ \tau_{\omega t e_j}  \Big].
\end{aligned}
\end{equation*}
We then define 
\begin{equation*}
\begin{aligned}
R^{\mathrm{quadr}, 1} & \coloneqq \sum_j \adiv_1\Big(\partial_j (a_j b_j)  (\varphi_\mu^j \tilde{\varphi}_\mu^j)_\lambda \circ \tau_{\omega t e_j}, \ \big((\psi^j)^2-1\big)_\nu \Big), \\
R^{\mathrm{quadr}, 2} & \coloneqq \sum_j \adiv_1\left( \partial_j (a_j b_j) , \  \left(\varphi_\mu^j \tilde{\varphi}_\mu^j-1\right)_\lambda   \circ \tau_{\omega t e_j} \right),
\end{aligned}
\end{equation*}
and
\begin{equation}
\label{eq:R:quadr}
R^{\mathrm{quadr}} \coloneqq R^{\mathrm{quadr}, 1} + R^{\mathrm{quadr}, 2},
\end{equation}
so that \eqref{eq:div:r:quadr} holds. Notice that the definitions of $R^{\mathrm{quadr}, 1}$ and $R^{\mathrm{quadr},2}$ are well posed, as 
\begin{equation*}
\fint_{\Td} \big((\psi^j)^2-1\big)_\nu = 0, \qquad \fint_{\Td} \left(\varphi_\mu^j \tilde{\varphi}_\mu^j-1\right)_\lambda   \circ \tau_{\omega t e_j} = 0,
\end{equation*}
because of \eqref{eq:mean-value-1-j} and \eqref{eq:psi:mean:value}. We now estimate $R^{\chi}$, $R^{\mathrm{time}, 1}$, $R^{\mathrm{quadr}}$.

\begin{lem}[Bound on $ R^\chi $]
It holds
	\begin{equation}
	\left\|R^\chi(t)\right\|_{L^1} \le \frac{\delta}{2}.
	\end{equation}
	\label{lem:Rchi}
\end{lem}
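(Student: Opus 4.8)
The plan is to bound $R^\chi(t)$ directly from its definition \eqref{eq:R-chi}, using only the defining property of the cut-off functions $\chi_j$ and the assumption that the defect $R_0$ is small in $L^1$. Recall that $R^\chi = -\sum_j (1-\chi_j^2) R_0^j e_j$, so by the triangle inequality
\[
\|R^\chi(t)\|_{L^1} \le \sum_{j=1}^d \big\| (1-\chi_j^2(t))\, R_0^j(t) \big\|_{L^1}.
\]
Now I would exploit the key pointwise fact: $1-\chi_j^2(t,x)$ is nonzero only where $\chi_j(t,x)<1$, which by definition of $\chi_j$ forces $|R_0^j(t,x)| < \tfrac{\delta}{2d}$. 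Since also $0 \le 1-\chi_j^2 \le 1$, on that set $|(1-\chi_j^2)R_0^j| \le |R_0^j| < \tfrac{\delta}{2d}$, and on the complement the integrand vanishes. Integrating over $\Td$ (which has unit measure) gives $\|(1-\chi_j^2(t))R_0^j(t)\|_{L^1} \le \tfrac{\delta}{2d}$ for each $j$.

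Summing over $j=1,\dots,d$ then yields $\|R^\chi(t)\|_{L^1} \le d \cdot \tfrac{\delta}{2d} = \tfrac{\delta}{2}$, which is exactly the claim. This holds for every $t\in[0,T]$ since the estimate is uniform in $t$.

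There is essentially no obstacle here: the lemma is a bookkeeping step whose entire content is that the cut-off $\chi_j$ was rigged precisely so that the ``thrown away'' part of the defect (the part not carried by the oscillatory Mikado terms) is controlled by the target error threshold $\delta$ rather than by the possibly large $\|R_0\|_{L^1}$. The only thing to be careful about is the constant: one must use the threshold $\tfrac{\delta}{2d}$ from the definition of $\chi_j$ (not $\tfrac{\delta}{4d}$, which governs where $\chi_j$ is allowed to be nonzero but does not sharpen this bound), and the factor $d$ from the sum over coordinates, so that the product comes out to exactly $\tfrac{\delta}{2}$. No smoothness or oscillation parameters ($\lambda,\mu,\omega,\nu$) enter, which is consistent with the fact that this is the one piece of $R_1$ that cannot be made small by sending those parameters to infinity and instead relies on the structural choice of the cut-offs.
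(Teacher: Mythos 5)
Your proof is correct and follows essentially the same route as the paper: exploit that on the support of $1-\chi_j^2$ the definition of $\chi_j$ forces $|R_0^j| \le \delta/(2d)$, bound the $L^1$ norm termwise using the unit measure of $\Td$, and sum over the $d$ coordinates. The observations about using $0 \le 1-\chi_j^2 \le 1$ and which threshold ($\delta/(2d)$, not $\delta/(4d)$) is the operative one are sound and match the paper's reasoning.
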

\begin{proof}
	From the definition of $ \chi_j $ it is obvious that $ |R_0^j(t,x)|\le \frac{\delta}{2d} $ on the support of $(1 - \chi_j^2(t,x))$, so
	\[ \left\|R^\chi(t) \right\|_{L^1} \le \sum_j \int_{\operatorname{spt}(1 - \chi_j^2(t))} |R_0^j(t,x)| dx \le d \int_{\Td} \frac{\delta}{2d} \le \frac{\delta}{2}. \qedhere \]
\end{proof}

\begin{lem}[Bound on $R^{\mathrm{time}, 1}$]
It holds
	\begin{align}
	\left\|R^{\mathrm{time}, 1}(t)\right\|_{L^1} &\le C \frac{1}{\omega}.
	\end{align}
	\label{lem:Rqq13}
\end{lem}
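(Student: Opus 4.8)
The plan is to use that $R^{\mathrm{time},1}$ is obtained by applying the \emph{standard} antidivergence $\D^{-1}$ to functions of zero mean, and that the decaying factor we are after is entirely contained in the $L^1$-bound $\|Q^j(t)\|_{L^1}\le M/\omega$ from \eqref{eq:Mikado-bound-L1}. Since in general $\D^{-1}$ is not bounded on $L^1$, the estimate we really rely on is the endpoint case of Lemma~\ref{lem:antiderivative-end}, i.e.~$\|\D^{-1}h\|_{L^1}\le C\,\|h\|_{L^1}$ for every $h\in C_0^\infty(\Td)$ (take $k=1$, $p=1$ there, so that $W^{k-1,p}=L^1$).

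Concretely, I would apply this to each summand in the definition \eqref{eq:R-time1}; the argument of $\D^{-1}$ there has zero mean by construction, so
\begin{align*}
\|R^{\mathrm{time},1}(t)\|_{L^1}
&\le C\sum_{j=1}^d\Big\|\,\partial_t(a_jb_j)(t)\,Q^j(t)-\fint_{\Td}\partial_t(a_jb_j)(t)\,Q^j(t)\,\Big\|_{L^1}\\
&\le 2C\sum_{j=1}^d\big\|\partial_t(a_jb_j)(t)\,Q^j(t)\big\|_{L^1},
\end{align*}
using $\|g-\fint_{\Td}g\|_{L^1}\le 2\|g\|_{L^1}$. Next I would split the product by Hölder, $\|\partial_t(a_jb_j)(t)\,Q^j(t)\|_{L^1}\le\|\partial_t(a_jb_j)\|_{C^0}\|Q^j(t)\|_{L^1}$, bound the first factor by $\|a_jb_j\|_{C^1}\le\|a_j\|_{C^1}\|b_j\|_{C^1}\le C$ (a constant independent of $\lambda,\mu,\omega,\nu$, by \eqref{eq:aj-bj:derivatives}), and the second by $M/\omega$ via \eqref{eq:Mikado-bound-L1}. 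Summing over the finitely many $j$ gives $\|R^{\mathrm{time},1}(t)\|_{L^1}\le C/\omega$, uniformly in $t$ since all constants depend only on the space-time $C^1$-norm of $R_0$.

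The computation is routine once the right tool is identified; the only genuinely non-trivial ingredient is the endpoint antidivergence bound of Lemma~\ref{lem:antiderivative-end} and, correspondingly, the fact that \eqref{eq:R-time1} was set up with the mean-value correction precisely so that this bound applies. No oscillation or concentration estimates are needed for this lemma: the smallness comes for free from the $1/\omega$ prefactor hidden in $Q^j$.
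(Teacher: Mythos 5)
Your proof is correct and follows essentially the same route as the paper: apply the endpoint antidivergence estimate of Lemma~\ref{lem:antiderivative-end} (with $k=1$, $p=1$) to reduce to $\|\partial_t(a_jb_j)\,Q^j\|_{L^1}$, split by H\"older, and use $\|Q^j(t)\|_{L^1}\le M/\omega$ from~\eqref{eq:Mikado-bound-L1} together with~\eqref{eq:aj-bj:derivatives}. The explicit factor $2$ from subtracting the mean is simply absorbed into $C$ in the paper's version.
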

\begin{proof}
%	From the definitions of $ q $ and $ w $ we immediately get
%	\begin{align*}
%		\left\|R^q\right\|_{L^1} &\le C \|q\|_{L^1}\left(\|u_0\|_{C^0}+\|w\|_{C^0}\right) \\
%		&\le C \frac{1}{\omega} \|R_0\|_{C^0} \|\psi\|_{C^0} \|\varphi_\mu \tilde{\varphi}_\mu\|_{L^1} \left( \|u_0\|_{C^0} + \frac{1}{\eta} \left\|\chi_j |R_0^j|^{1/p'} \right\|_{C^0} \|\psi\|_{C^0} \|\tilde{\varphi}_\mu\|_{C^0} \right) \\
%		&\le C(\eta,\|R_0\|_{C^0},\|u_0\|_{C^0}) \frac{\mu^{d/p'}}{\omega}
%	\end{align*}
%	which is exactly the desired inequality by definition of $ b $. Estimating $ R^{q_1} $ is also straightforward, as 	
Using the definition of $R^{\mathrm{time}, 1}$ in \eqref{eq:R-time1} and applying Lemma~\ref{lem:antiderivative-end}, we get
\begin{align*}
\left\|R^{\mathrm{time},1}(t)\right\|_{L^1} 
	& \le C \sum_j \|\partial_t(a_j(t) b_j(t)) Q^j(t)\|_{L^1} \\
	& \leq C \sum_j \|\partial_t (a_j b_j)\|_{C^0} \|Q^j(t)\|_{L^1} \\
\text{(by \eqref{eq:Mikado-bound-L1})}
	& \leq C \frac{1}{\omega}.  \qedhere
\end{align*}
\end{proof}

\begin{lem}[Bound on $ R^{\mathrm{quadr}} $]
It holds
	\begin{equation}
	\left \|R^{\mathrm{quadr}}(t)\right \|_{L^1} \le C \left( \frac{\lambda\mu}{\nu} +\frac{1}{\lambda} \right).
	\end{equation}
	\label{lem:Rquadr1}
\end{lem}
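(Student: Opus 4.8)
The plan is to estimate $R^{\mathrm{quadr}} = R^{\mathrm{quadr},1} + R^{\mathrm{quadr},2}$ by bounding each summand separately, in each case applying the improved antidivergence estimates \eqref{eq:antidiv-in-infty}--\eqref{eq:antidiv-in-p} to the operator $\adiv_1$, since both arguments involve a fast-oscillating factor (either $\big((\psi^j)^2-1\big)_\nu$ with oscillation $\nu$, or $\big(\varphi_\mu^j\tilde\varphi_\mu^j-1\big)_\lambda\circ\tau_{\omega te_j}$ with oscillation $\lambda$). The coefficient $\partial_j(a_jb_j) = \partial_j(\chi_j^2 R_0^j)$ is a fixed smooth function whose $C^k$-norms are controlled by $C$ (independent of $\lambda,\mu,\omega,\nu$) via \eqref{eq:aj-bj:derivatives}.

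For $R^{\mathrm{quadr},1}$, I would take $f = \partial_j(a_jb_j)\,(\varphi_\mu^j\tilde\varphi_\mu^j)_\lambda\circ\tau_{\omega te_j}$ and $g_\nu = \big((\psi^j)^2-1\big)_\nu$, and apply \eqref{eq:antidiv-in-p} with $p=1$: this gives a bound $\le C\|(\psi^j)^2-1\|_{L^\infty}\big(\nu^{-1}\|f\|_{L^1} + \nu^{-1}\|\D f\|_{L^1}\big)$, where $N=1$. The key point is that $(\varphi_\mu^j\tilde\varphi_\mu^j)_\lambda$ has $L^1$-norm of order $\mu^{a+b-d} = \mu^0 = 1$ by \eqref{eq:phi-j-mu-scaling} (since $a+b=d$), while each spatial derivative costs a factor $\lambda\mu$ (the $\lambda$ from the dilation, the $\mu$ from differentiating $\varphi_\mu$). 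Hence $\|f\|_{L^1}\le C$ and $\|\D f\|_{L^1}\le C\lambda\mu$, producing the contribution $C\lambda\mu/\nu$. For $R^{\mathrm{quadr},2}$, I would take $f = \partial_j(a_jb_j)$ and $g_\lambda = \big(\varphi_\mu^j\tilde\varphi_\mu^j-1\big)_\lambda\circ\tau_{\omega te_j}$, apply \eqref{eq:antidiv-in-infty} with $p=1$: the bound is $\le C\|\varphi_\mu^j\tilde\varphi_\mu^j-1\|_{L^1}\big(\lambda^{-1}\|f\|_{C^1}\big)$ — wait, more carefully, since $g$ here is $\varphi_\mu^j\tilde\varphi_\mu^j - 1$ whose $L^1$-norm is $O(1)$ (both $\|\varphi_\mu^j\tilde\varphi_\mu^j\|_{L^1}$ and $\|1\|_{L^1}$ are $O(1)$), we get $\le C\|f\|_{C^1}\big(\lambda^{-1} + \lambda^{-1}\big) = C/\lambda$, using $N=1$ and $\|\D^k f\|_{L^\infty}\le\|f\|_{C^{k+1}}\le C$. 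Summing over $j$ and adding the two pieces yields $\|R^{\mathrm{quadr}}(t)\|_{L^1}\le C\big(\lambda\mu/\nu + 1/\lambda\big)$.

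The main technical care — not really an obstacle, but the point requiring attention — is correctly tracking the dilation scalings: one must use $\|(h)_\lambda\|_{L^1(\Td)} = \|h\|_{L^1(\Td)}$ for the periodic extension, note that $\varphi_\mu^j\tilde\varphi_\mu^j$ scales like $\mu^{a+b-d}=1$ in $L^1$ by \eqref{eq:phi-j-mu-scaling}, and that the composition with the translation $\tau_{\omega te_j}$ changes no spatial norm. A secondary point is verifying that the mean-zero conditions required for $\adiv_1$ to be well-defined indeed hold, which was already checked in the text just before the lemma via \eqref{eq:mean-value-1-j} and \eqref{eq:psi:mean:value}. Everything else is a routine application of Hölder together with \eqref{eq:aj-bj:derivatives} and the scaling identities, so the proof is short.
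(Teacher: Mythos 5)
Your proposal is correct and follows essentially the same argument as the paper: split $R^{\mathrm{quadr}}$ into its two summands, apply \eqref{eq:antidiv-in-p} with $N=1$ to $R^{\mathrm{quadr},1}$ to gain a factor $\nu^{-1}$, and apply \eqref{eq:antidiv-in-infty} with $N=1$ to $R^{\mathrm{quadr},2}$ to gain a factor $\lambda^{-1}$, using the scalings \eqref{eq:phi-j-mu-scaling} (which give $\|\varphi_\mu^j\tilde\varphi_\mu^j\|_{L^1}=1$ and $\|D(\varphi_\mu^j\tilde\varphi_\mu^j)\|_{L^1}\lesssim\mu$) together with \eqref{eq:aj-bj:derivatives} to control the remaining factors. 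Your bookkeeping of the $\lambda\mu$ cost per spatial derivative and the resulting $\lambda\mu/\nu + 1/\lambda$ bound matches the paper's computation.
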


\begin{proof}
	First observe that both terms in the definition of $ R^{\mathrm{quadr}} $ need to be handled separately as the fast oscillation term of $R^{\mathrm{quadr}, 1}$ is $ (1/\nu) $-periodic whereas in $R^{\mathrm{quadr}, 2}$ there is only $ (1/\lambda) $-periodicity. For $R^{\mathrm{quadr}, 1}$,  \eqref{eq:antidiv-in-p} (with $N = 1$) and standard Hölder gives us
	\begin{align*}
		\|R^{\mathrm{quadr}, 1}(t) \|_{L^1} 
		&\le \frac{C}{\nu} \|\psi^2-1\|_{C^0} \Bigg( \left \|\partial_j \left( a_j(t) b_j(t) \right) \left( \varphi_\mu^j \tilde{\varphi}_\mu^j \right)_\lambda \circ \tau_{\omega t e_j} \right\|_{L^1} \\
		& \qquad \qquad \qquad \qquad  \qquad + \left\|\D^1\Big( \partial_j\big(  a_j(t) b_j(t) \big) \, (\varphi_\mu^j \tilde{\varphi}_\mu^j)_\lambda \circ \tau_{\omega t e_j} \Big) \right\|_{L^1} \Bigg)\\
		& \le \frac{C}{\nu} \Bigg( \left\| \partial_j \left(a_j(t) b_j(t) \right) \right\|_{C^0} \left\| \left(\varphi_\mu^j \tilde{\varphi}_\mu^j \right)_\lambda \circ \tau_{\omega t e_j} \right\|_{L^1} \\
		& \qquad \qquad 
		+  \left\| \partial_j \left(a_j(t) b_j(t) \right) \right\|_{C^1} \left\| \left(\varphi_\mu^j \tilde{\varphi}_\mu^j \right)_\lambda \circ \tau_{\omega t e_j} \right\|_{W^{1,1}} 
		\Bigg)
		\\
		&\le \frac{C}{\nu} \|a_j b_j \|_{C^2}  \left( \|\varphi_\mu^j \tilde{\varphi}_\mu^j \|_{L^1} +  \lambda\|\varphi_\mu^j \tilde{\varphi}_\mu^j \|_{W^{1,1}}\right) \\
		&\le  \frac{C \lambda\mu}{\nu},
	\end{align*}
	where in the last step we used \eqref{eq:phi-j-mu-scaling}. 	For $R^{\mathrm{quadr}, 2}$ we apply \eqref{eq:antidiv-in-infty} (again with $N=1$) and obtain
	\begin{align*}
		&\hspace{-2em} \|R^{\mathrm{quadr}, 2}(t)\|_{L^1} \\
%		& \left\|\adiv_1 \left( \partial_j \left(\chi_j^2 R_0^j\right) , (\varphi_\mu^j \tilde{\varphi}_\mu^j)_\lambda -1 \right) \right\|_{L^1} \\
		&\le C\|\varphi_\mu^j \tilde{\varphi}_\mu^j -1\|_{L^1} \left( \frac{1}{\lambda} \left\|\partial_j\left(\chi_j^2R_0^j\right)\right\|_{C_0} + \frac{1}{\lambda} \left\|\partial_j\left(\chi_j^2R_0^j\right)\right\|_{C^1} \right) \\
		&\le C \frac{1}{\lambda},
	\end{align*}
	as $\|\varphi_\mu^j \tilde{\varphi}_\mu^j \|_{L^1} = 1$, by \eqref{eq:phi-j-mu-scaling}. 	Together these two estimates supply the required bound.
\end{proof}

\subsection{Analysis of the second line in \eqref{eq:divR1}}
We have
\begin{equation*}
\begin{aligned}
\partial_t (\vartheta + \vartheta_c) + \div(\vartheta u_0 + \rho_0 w) 
& = \sum_j a_j \partial_t \Theta^j + (\partial_t a_j) \Theta^j + \div(\vartheta u_0 + \rho_0 w) + \vartheta_c' \\
& = \sum_j  \bigg( a_j \partial_t \Theta^j - \fint a_j \partial_t \Theta^j  \bigg) \\
& \qquad + \bigg( (\partial_t a_j) \Theta^j - \fint (\partial_t a_j) \Theta^j \bigg) + \div(\vartheta u_0 + \rho_0 w) \\
& \qquad  + \underbrace{\fint a_j \partial_t \Theta^j  + \fint (\partial_t a_j) \Theta^j + \vartheta_c'}_{\substack{=0 \text{ as the l.h.s. and each other line} \\ \text{in the r.h.s. has zero mean value} }} \\
& = \div R^{\mathrm{time}, 2} + \div R^{\mathrm{lin}},
\end{aligned}
\end{equation*}
where
\begin{equation}
\label{eq:R-lin}
R^{\mathrm{lin}} : = \D^{-1} \bigg(  (\partial_t a_j) \Theta^j - \fint (\partial_t a_j) \Theta^j \bigg) + \vartheta u_0 + \rho_0 w
\end{equation}
and $R^{\mathrm{time},2}$ is defined in such a way that
\begin{equation*}
\div R^{\mathrm{time}, 2} = \sum_j  \bigg( a_j \partial_t \Theta^j - \fint a_j \partial_t \Theta^j  \bigg),
\end{equation*}
as follows. 
Using \eqref{eq:time-derivative-theta}, we get
% Recalling that $(\phi_\mu^j)_\lambda$ is evaluated at the point $x - \omega t e_j$, and thus $\partial_t (\phi_\mu^j)_\lambda (x - \omega t e_j) = - \omega \partial_j((\varphi_\mu^j)_\lambda)$,  we have
%\[ a_j  \partial_t \Theta^j = - \omega a_j \partial_j\left((\varphi_\mu^j)_\lambda\right) \psi_\nu^j \]
\begin{equation*}
a_j \partial_t \Theta^j = - \lambda \omega a_j \left( \left(\partial_j \varphi_\mu^j \right)_\lambda \circ \tau_{\omega t e_j} \right) \psi_\nu^j
\end{equation*}
and thus we can define
\begin{equation}
\label{eq:R-time2}
R^{\mathrm{time}, 2} \coloneqq - \lambda \omega \sum_j \adiv_N \Big( a_j \left(\partial_j \varphi_\mu^j \right)_\lambda \circ \tau_{\omega t e_j}, \ \psi_\nu^j \Big),
\end{equation}
where $N$ will be fixed in Section \ref{sec:proof}, as we have already stressed.

\begin{lem}[Bound on $ R^{\mathrm{lin}} $]
It holds
	\begin{equation}
	\left\|R^{\mathrm{lin}}(t)\right\|_{L^1} \le C \left( \frac{1}{\mu^a}+\frac{1}{\mu^b} \right).
	\end{equation}
	\label{lem:Rlin1}
\end{lem}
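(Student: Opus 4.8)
The plan is to estimate separately the three summands appearing in the definition \eqref{eq:R-lin} of $R^{\mathrm{lin}}$, namely the antiderivative term $\D^{-1}\big((\partial_t a_j)\Theta^j - \fint (\partial_t a_j)\Theta^j\big)$, the term $\vartheta u_0$, and the term $\rho_0 w$. For the first term I would use the endpoint estimate on antiderivatives, Lemma~\ref{lem:antiderivative-end} with $k=1$, which gives $\|\D^{-1}h\|_{L^1}\le C\|h\|_{L^1}$; then since $\partial_t a_j$ is controlled in $C^0$ by $C(\eta,\delta,\|R_0\|_{C^1})$ via \eqref{eq:aj-bj:derivatives}, we are left to bound $\|\Theta^j(t)\|_{L^1}$, which by \eqref{eq:Mikado-bound-L1} is at most $M/\mu^b$. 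Hence this contribution is $\le C/\mu^b$. (The subtracted mean value only costs an extra factor $2$, as noted after \eqref{eq:adiv-raw-est}.)

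For the term $\rho_0 w$ I would use Hölder in the form $\|\rho_0 w\|_{L^1}\le \|\rho_0\|_{C^0}\sum_j\|b_j\|_{C^0}\|W^j(t)\|_{L^1}$, and then invoke \eqref{eq:Mikado-bound-L1}, which bounds $\|W^j(t)\|_{L^1}$ by $M/\mu^a$; together with \eqref{eq:aj-bj:derivatives} this yields $\le C/\mu^a$. For the term $\vartheta u_0$ the same scheme applies: $\|\vartheta u_0\|_{L^1}\le \|u_0\|_{C^0}\sum_j\|a_j\|_{C^0}\|\Theta^j(t)\|_{L^1}\le C/\mu^b$ again by \eqref{eq:Mikado-bound-L1} and \eqref{eq:aj-bj:derivatives}. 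Summing the three contributions gives $\|R^{\mathrm{lin}}(t)\|_{L^1}\le C(\mu^{-a}+\mu^{-b})$, as claimed.

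I do not expect any genuine obstacle here: the lemma is a routine application of the already-established tools (the endpoint antiderivative estimate, the $L^1$ Mikado bounds, and the uniform $C^k$ bounds on the coefficients $a_j,b_j$). The only mild point to be careful about is that the antiderivative $\D^{-1}$ is only defined on zero-mean functions, which is exactly why the mean value is subtracted inside it in \eqref{eq:R-lin}; and that the terms $\vartheta u_0$, $\rho_0 w$ need no antidivergence at all since they already appear (undifferentiated) inside $R^{\mathrm{lin}}$, so no oscillation gain is needed for them — the smallness comes purely from the concentration parameter $\mu$ through the $L^1$ norms of the Mikado building blocks, reflecting that $\varphi_\mu$ has small $L^1$ mass.
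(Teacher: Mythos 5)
Your proposal matches the paper's proof essentially line by line: the antiderivative term is controlled by Lemma~\ref{lem:antiderivative-end} together with the $L^1$ Mikado bound \eqref{eq:Mikado-bound-L1} and the $C^0$ bound on $\partial_t a_j$ from \eqref{eq:aj-bj:derivatives}, while $\rho_0 w$ and $\vartheta u_0$ are handled by plain H\"older with the same $L^1$ Mikado bounds. (You also correctly pair $w$ with the coefficients $b_j$, which fixes a harmless typo in the paper where $a_j$ appears there.)
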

\begin{proof}
	For the first term in the definition \eqref{eq:R-lin} of $R^{\mathrm{lin}}$, Lemma~\ref{lem:antiderivative-end} yields
	\begin{align*}
		\left\|\D^{-1}\left( \partial_t a_j(t)  \Theta^j(t) - \fint \partial_t a_j(t) \Theta^j(t) \right)\right\|_{L^1} &\le C \|\partial_t a_j(t) \, \Theta^j(t)\|_{L^1} \\
		& \leq C \|\partial_t a_j\|_{C^0} \|\Theta^j(t)\|_{L^1} \\
\text{(by \eqref{eq:Mikado-bound-L1})}
		&\le  \frac{C}{\mu^b}.
	\end{align*}
	For the second term in the definition \eqref{eq:R-lin} of $R^{\mathrm{lin}}$, simply apply Hölder's inequality
	\begin{align*}
		\|\rho_0(t)w(t)\|_{L^1} &\le \|\rho_0\|_{C^0} \|a_j\|_{C^0} \|W^j(t)\|_{L^{1}} \\
\text{(by \eqref{eq:Mikado-bound-L1})}
		&\le \frac{C}{\mu^a}.
	\end{align*}
	The third term is handled completely analog, resulting in
	\[ \|\vartheta(t) u_0(t)\|_{L^1} \le  \frac{C}{\mu^b}. \]
	By adding the three terms we obtain the required bound.
\end{proof}

\begin{lem}[Bound on $ R^{\mathrm{time}, 2} $]
It holds
	\begin{equation}
	\left\|R^{\mathrm{time}, 2}(t)\right\| \le  C \frac{\omega}{\mu^b}\left(\sum_{k=1}^N \left(\frac{\lambda\mu}{\nu}\right)^k + \frac{(\lambda\mu)^{N+1}}{\nu^N}\right).
	\end{equation}
	\label{lem:Rlin2}
\end{lem}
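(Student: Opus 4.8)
The plan is to deduce the bound by a direct application of the improved antidivergence estimate \eqref{eq:antidiv-in-p}, exactly in the spirit of the earlier bounds on $w_c$ and on $R^{\mathrm{quadr},1}$; the norm in the statement is the $L^1$-norm. Recalling the definition \eqref{eq:R-time2}, write
\[
R^{\mathrm{time},2}(t) = -\lambda\omega\sum_j \adiv_N\big(g_j(t),\psi^j_\nu\big),\qquad g_j(t) \coloneqq a_j(t)\,\big((\partial_j\varphi_\mu^j)_\lambda\circ\tau_{\omega t e_j}\big).
\]
Since the fast-oscillating factor is $\psi^j_\nu=(\psi^j)_\nu$, I would apply \eqref{eq:antidiv-in-p} with $p=1$, with $f=g_j(t)$, $g=\psi^j$, and with $\nu$ in the role of the oscillation parameter, obtaining
\[
\big\|\adiv_N(g_j(t),\psi^j_\nu)\big\|_{L^1} \le C\|\psi\|_{L^\infty}\Big(\sum_{k=0}^{N-1}\nu^{-k-1}\|\D^k g_j(t)\|_{L^1} + \nu^{-N}\|\D^N g_j(t)\|_{L^1}\Big).
\]

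The next step is to estimate $\|\D^k g_j(t)\|_{L^1}$, which is a minor variant of Lemma~\ref{lem:fj}. I would bound $\|\D^k g_j(t)\|_{L^1}\le\|g_j(t)\|_{W^{k,1}}\le C\|a_j\|_{C^k}\,\|(\partial_j\varphi_\mu^j)_\lambda\|_{W^{k,1}}$ via the Leibniz rule and translation invariance, then use the scaling identity \eqref{eq:osc} to pull out $\lambda^k$, giving $\|(\partial_j\varphi_\mu^j)_\lambda\|_{W^{k,1}}\le C\lambda^k\|\varphi_\mu^j\|_{W^{k+1,1}}$. Finally \eqref{eq:phi-j-mu-scaling} with $r=1$ — recalling $a=d/p$, hence $a-d=-b$ by \eqref{eq:def-a-b} — yields $\|\varphi_\mu^j\|_{W^{k+1,1}}\le C\mu^{k+1-b}$ (the top-order term dominating for $\lambda\mu\ge1$). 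Together with the uniform bound \eqref{eq:aj-bj:derivatives} on $\|a_j\|_{C^k}$ this gives $\|\D^k g_j(t)\|_{L^1}\le C(\lambda\mu)^k\mu^{1-b}$.

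Substituting this into the antidivergence estimate, factoring out $\mu^{1-b}$, re-indexing the first geometric-type sum ($\sum_{k=0}^{N-1}(\lambda\mu)^k\nu^{-k-1}=\tfrac{1}{\lambda\mu}\sum_{k=1}^{N}(\lambda\mu/\nu)^k$), and summing over the finitely many $j$, one gets
\[
\big\|\adiv_N(g_j(t),\psi^j_\nu)\big\|_{L^1} \le C\mu^{1-b}\Big(\tfrac{1}{\lambda\mu}\sum_{k=1}^{N}\big(\tfrac{\lambda\mu}{\nu}\big)^k + \tfrac{(\lambda\mu)^N}{\nu^N}\Big).
\]
Multiplying by the prefactor $\lambda\omega$ from \eqref{eq:R-time2} and simplifying the powers of $\lambda$ and $\mu$ — the $\lambda\mu$ cancelling against $\tfrac{1}{\lambda\mu}$ in the sum, and $\lambda\cdot\mu\cdot(\lambda\mu)^N=(\lambda\mu)^{N+1}$ in the last term — produces exactly $C\,\omega\mu^{-b}\big(\sum_{k=1}^N(\lambda\mu/\nu)^k+(\lambda\mu)^{N+1}/\nu^N\big)$, as claimed.

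This lemma presents no real obstacle: it is entirely parallel to Lemmas~\ref{lem:wcLp'}, \ref{lem:wcW1p} and \ref{lem:Rquadr1}. The only point demanding a little care is the bookkeeping of scaling exponents — in particular the extra factor $\mu^{1-b}$ coming from the single extra derivative $\partial_j$ applied to $\varphi_\mu^j$ relative to its $L^1$ scaling, and the re-indexing that lets the $\lambda\omega$ prefactor combine with $\mu^{1-b}$ to leave precisely $\omega\mu^{-b}$ while shifting the exponent in the last term from $N$ to $N+1$.
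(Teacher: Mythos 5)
Your argument is correct and follows the paper's own proof essentially verbatim: apply the antidivergence estimate \eqref{eq:antidiv-in-p} with $p=1$, bound $\|\D^k g_j(t)\|_{L^1}$ by $\|g_j(t)\|_{W^{k,1}}$, split off $\|a_j\|_{C^k}$ by Leibniz and translation invariance, use the scalings \eqref{eq:osc} and \eqref{eq:phi-j-mu-scaling} to get $\|(\partial_j\varphi_\mu^j)_\lambda\|_{W^{k,1}} \lesssim (\lambda\mu)^k\mu^{1-b}$, and then re-index and absorb the $\lambda\omega$ prefactor. The bookkeeping is accurate (the only nit is that it is $\mu\ge 1$, not $\lambda\mu\ge1$, that lets the top-order term dominate in $\|\varphi_\mu^j\|_{W^{k+1,1}}$, but this is immaterial).
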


\begin{proof}
	$ R^{\mathrm{time}, 2} $ is defined in \eqref{eq:R-time2} by application of the bilinear antidivergence operator $\adiv_N$ of \Cref{subsec:adiv} to the product of $a_j (\partial_j \varphi_\mu^j)_\lambda \circ \tau_{\omega t e_j}$  and $ \psi_\nu^j $, so \eqref{eq:antidiv-in-p} yields
	\begin{align*}
		\left\|R^{\mathrm{time}, 2}(t)\right\|_{L^1} 
		& \leq C \lambda \omega  \Bigg( \sum_{k=0}^{N-1} \frac{1}{\nu^{k+1}} \left\| \D^k \left( a_j(t) (\partial_j \varphi_\mu^j)_\lambda \circ \tau_{\omega t e_j} \right) \right\|_{L^1} + \frac{1}{\nu^{N}} \left\| \D^N \left( a_j(t) (\partial_j \varphi_\mu^j)_\lambda \circ \tau_{\omega t e_j} \right) \right\|_{L^1} \Bigg) \\
		& \leq C \lambda \omega  \Bigg( \sum_{k=0}^{N-1} \frac{1}{\nu^{k+1}} \left\| a_j(t) (\partial_j \varphi_\mu^j)_\lambda \circ \tau_{\omega t e_j} \right\|_{W^{k,1}} + \frac{1}{\nu^{N}} \left\| a_j(t) (\partial_j \varphi_\mu^j)_\lambda \circ \tau_{\omega t e_j} \right\|_{W^{N,1}} \Bigg) \\
		& \leq C \lambda \omega  \Bigg( \sum_{k=0}^{N-1} \frac{1}{\nu^{k+1}} \| a_j \|_{C^k} \left\|(\partial_j \varphi_\mu^j)_\lambda \right\|_{W^{k,1}} + \frac{1}{\nu^{N}} \| a_j \|_{C^N} \left\| (\partial_j \varphi_\mu^j)_\lambda \right\|_{W^{N,1}} \Bigg) \\
		& \leq C  \lambda \omega  \Bigg( \sum_{k=0}^{N-1} \frac{\left\|(\partial_j \varphi_\mu^j)_\lambda \right\|_{W^{k,1}} }{\nu^{k+1}} + \frac{\left\|(\partial_j \varphi_\mu^j)_\lambda \right\|_{W^{N,1}} }{\nu^{N}} \Bigg) \\
		\text{(by \eqref{eq:phi-j-mu-scaling})}
		& \leq C  \lambda \mu^{1 - b} \omega  \Bigg( \sum_{k=0}^{N-1} \frac{(\lambda \mu)^k}{\nu^{k+1}} + \frac{(\lambda \mu)^N}{\nu^{N}} \Bigg) \\
		&\le C \frac{\omega}{\mu^b} \left(\sum_{k=1}^N \left(\frac{\lambda\mu}{\nu}\right)^k + \frac{(\lambda\mu)^{N+1}}{\nu^N} \right),
	\end{align*}
	which is exactly the desired inequality.
\end{proof}

\subsection{Analysis of the third line in \eqref{eq:divR1}} 
We simply define 
\begin{equation}
\label{eq:Rq}
R^q \coloneqq q(u_0+w).
\end{equation}

\begin{lem}[Bound on $ R^q $]
It holds
	\begin{align}
	\left\|R^q(t)\right\|_{L^1} &\le C \frac{\mu^b}{\omega}.
	\end{align}
	\label{lem:Rqq1}
\end{lem}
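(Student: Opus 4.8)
The plan is to estimate $R^q = q(u_0+w)$ directly by the triangle inequality, splitting into the contribution of the given smooth field $u_0$ and the contribution of the perturbation $w$, and then to apply H\"older's inequality with exponents matched to the estimates already established for $q$ and $w$. Concretely, I would write
\[
\left\|R^q(t)\right\|_{L^1} \le \left\|q(t)\,u_0(t)\right\|_{L^1} + \left\|q(t)\,w(t)\right\|_{L^1}.
\]

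For the first term I would use that $u_0$ is smooth (hence bounded on $[0,T]\times\Td$) together with the $L^1$ bound on $q$, which follows from H\"older's inequality in the form $\|q(t)\|_{L^1}\le\sum_j\|a_jb_j\|_{C^0}\|Q^j(t)\|_{L^1}$, the estimate $\|Q^j_{\lambda,\mu,\omega,\nu}(t)\|_{L^1}\le M/\omega$ from \eqref{eq:Mikado-bound-L1}, and \eqref{eq:aj-bj:derivatives}; this gives $\|q(t)u_0(t)\|_{L^1}\le\|u_0\|_{C^0}\|q(t)\|_{L^1}\le C/\omega$. For the second term I would apply H\"older with exponents $p$ and $p'$, using Lemma~\ref{lem:qLp} (i.e.\ \eqref{eq:q-in-Lp}), $\|q(t)\|_{L^p}\le C\mu^b/\omega$, and Lemma~\ref{lem:wLp'} (i.e.\ \eqref{eq:w-in-Lp'}), which bounds $\|w(t)\|_{L^{p'}}$ by $\tfrac{M}{2\eta}\|R_0(t)\|_{L^1}^{1/p'}+C\lambda^{-1/p'}\le C$. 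Hence $\|q(t)w(t)\|_{L^1}\le\|q(t)\|_{L^p}\|w(t)\|_{L^{p'}}\le C\mu^b/\omega$.

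Combining the two terms and using $\mu\ge1$ (so that $1\le\mu^b$ and the first term is absorbed into the second) yields $\|R^q(t)\|_{L^1}\le C\mu^b/\omega$, as claimed. There is essentially no obstacle here: the estimate is a direct consequence of the previously proved bounds on $q$ and $w$, and the only mild point to keep in mind is to pair $q$ with $u_0$ in $L^1$-times-$C^0$ (rather than $L^p$-times-$L^{p'}$), which avoids needing an $L^{p'}$ bound on $u_0$ and keeps the right-hand side at order $\mu^b/\omega$ rather than something worse.
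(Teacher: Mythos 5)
Your proof is correct and gives the same final bound $C\mu^b/\omega$, but for the term $q\,w$ you use a genuinely different pairing than the paper. The paper bounds $\|R^q(t)\|_{L^1}\le\|q(t)\|_{L^1}\left(\|u_0\|_{C^0}+\|w(t)\|_{C^0}\right)$ and invokes the $C^0$ estimate $\|W^j(t)\|_{C^0}\le M\mu^b$ of \eqref{eq:Mikado-bound-Linfty}, so that $\|q\|_{L^1}\sim 1/\omega$ supplies the $\omega$-decay while $\|w\|_{C^0}$ supplies the $\mu^b$ growth. You instead handle $q\,u_0$ in $L^1\times C^0$ and $q\,w$ in $L^p\times L^{p'}$, drawing on the already-established Lemmas~\ref{lem:qLp} and~\ref{lem:wLp'}: here the factor $\mu^b/\omega$ comes entirely from $\|q\|_{L^p}$, while $\|w\|_{L^{p'}}\le C$ uniformly in the oscillation parameters. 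Both routes are equally short and rigorous, and each is a direct consequence of the Mikado scaling; your version has the mild advantage of never needing the $C^0$ bound on the Mikado field, while the paper's version treats $u_0$ and $w$ in one stroke and makes the $L^1$-norm of $q$ the single source of smallness. The absorption step $1\le\mu^b$ for $\mu\ge1$ at the end is indeed all that is needed to match the stated bound.
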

\begin{proof}
	From the definitions of $ q $ and $ w $ we immediately get
	\begin{align*}
		\left\|R^q(t)\right\|_{L^1} &\le \|q(t)\|_{L^1}\left(\|u_0(t)\|_{C^0}+\|w(t)\|_{C^0}\right) \\
		& \leq \sum_j \|a_j b_j\|_{C^0} \|Q^j(t)\|_{L^1} \bigg( \|u_0\|_{C^0} + \sum_i \|b_i\|_{C^0} \|W^i(t)\|_{C^0} \bigg) \\
		& \leq C \sum_j \|Q^j(t)\|_{L^1} \bigg( 1 + \sum_i \|W^i(t)\|_{C^0} \bigg) \\
		\text{(by \eqref{eq:Mikado-bound-L1} and \eqref{eq:Mikado-bound-Linfty})} 
		& \leq  \frac{C}{\omega} (1 + \mu^b),
	\end{align*}
	which implies the desired inequality.
\end{proof}

\subsection{Analysis of the fourth line in \eqref{eq:divR1}}
We simply define
\begin{equation}
\label{eq:Rcorr}
R^{\mathrm{corr}}\coloneqq (\rho_0+\vartheta+q)w_c .
\end{equation}
%Then
%\[ -R_1\coloneqq R^\chi + R^{\mathrm{quadr}} + R_1^{\mathrm{lin}} + R_2^{\mathrm{lin}} + R^{q_1} + R^q + R^{\mathrm{corr}} \]
%is a smooth vector-valued function and $ (\rho_1,u_1,R_1) $ solve the continuity-defect equation \eqref{eq:cont-defect} by construction. In order to see that write
%\begin{align*}
%	-\div R_1 = & \div\left(R^\chi+R^{\mathrm{quadr}}+R^{q_1}\right) \\
%	&+ \div\left(R_1^{\mathrm{lin}}+R_2^{\mathrm{lin}}\right) \\
%	&+ \div R^q \\
%	&+ \div R^{\mathrm{corr}}
%\end{align*}
%and observe that, by definition of the single terms, the lines on the right hand side match the lines of \eqref{eq:divR1} up to constants arising from the mean value correctors. But then they are exactly equal, as here and in \eqref{eq:divR1} each line has mean value zero being a divergence and because of the correctors $ \vartheta_c $ and $ q_c $.
%
%The terms in the definition of $ R_1 $ will be estimated separately in the next section.
%

\begin{lem}[Bound on $ R^{\mathrm{corr}} $]
It holds
	\begin{equation*}
	\left\|R^{\mathrm{corr}}(t)\right\|_{L^1} \le C \left(1+\frac{1}{\lambda^{1/p}}+\frac{\mu^b}{\omega}\right) \left( \sum_{k=1}^N \left(\frac{\lambda \mu}{\nu}\right)^k +\frac{(\lambda \mu)^{N+1}}{\nu^N} \right).
	\end{equation*}
	\label{lem:Rcorr}
\end{lem}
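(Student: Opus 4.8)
The plan is to estimate $R^{\mathrm{corr}} = (\rho_0+\vartheta+q)w_c$ by a single application of Hölder's inequality with the conjugate exponents $p$ and $p'$, pairing each of the three density-type factors against $w_c$:
\[
\|R^{\mathrm{corr}}(t)\|_{L^1} \le \big( \|\rho_0(t)\|_{L^p} + \|\vartheta(t)\|_{L^p} + \|q(t)\|_{L^p} \big)\,\|w_c(t)\|_{L^{p'}}.
\]
It then remains to insert the estimates already proved for each factor. No further use of the improved antidivergence $\adiv_N$ is needed at this stage, since $w_c$ itself already carries the smallness in the very fast frequency $\nu$ through Lemma~\ref{lem:wcLp'}.

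For the first factor, since $\rho_0$ is a fixed smooth function, $\|\rho_0(t)\|_{L^p} \le \|\rho_0\|_{C^0} \le C$; this produces the constant term $1$ in the claimed bound. For the second factor, Lemma~\ref{lem:thetaLp} gives $\|\vartheta(t)\|_{L^p} \le \tfrac{M\eta}{2}\|R_0(t)\|_{L^1}^{1/p} + C\lambda^{-1/p}$, and since $\|R_0\|_{C_tL^1_x}$ is a fixed constant (depending on the data but not on $\lambda,\mu,\omega,\nu$), this is $\le C(1 + \lambda^{-1/p})$, accounting for the term $\lambda^{-1/p}$. For the third factor, Lemma~\ref{lem:qLp} yields directly $\|q(t)\|_{L^p} \le C\mu^b/\omega$.

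Finally, Lemma~\ref{lem:wcLp'} bounds the last factor by $\|w_c(t)\|_{L^{p'}} \le C\big(\sum_{k=1}^N (\lambda\mu/\nu)^k + (\lambda\mu)^{N+1}/\nu^N\big)$. Multiplying the two groups of estimates and absorbing all parameter-dependent constants into $C$ gives exactly
\[
\|R^{\mathrm{corr}}(t)\|_{L^1} \le C\Big(1 + \frac{1}{\lambda^{1/p}} + \frac{\mu^b}{\omega}\Big)\Big(\sum_{k=1}^N \Big(\frac{\lambda\mu}{\nu}\Big)^k + \frac{(\lambda\mu)^{N+1}}{\nu^N}\Big).
\]
There is no substantial obstacle here; the only point requiring care is consistency of the Hölder pairing — all three density-type terms must be estimated in $L^p$ (rather than, say, $L^\infty$ or $L^1$) so as to match the $L^{p'}$ bound available for $w_c$ — together with the bookkeeping observation that constants depending only on the fixed data $\rho_0, R_0, \eta, \delta$ may be freely absorbed into $C$ in the sense of the convention fixed at the beginning of this section.
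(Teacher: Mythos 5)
Your proof is correct and takes essentially the same route as the paper, which simply states that the bound is ``an immediate consequence of Lemmata~\ref{lem:thetaLp},~\ref{lem:qLp} and~\ref{lem:wcLp'}'' and omits the details. You have supplied exactly those omitted details: the $L^p$--$L^{p'}$ Hölder pairing of $(\rho_0+\vartheta+q)$ against $w_c$, with $\|R_0(t)\|_{L^1}$ and $\|\rho_0\|_{C^0}$ absorbed into $C$ under the section's constant convention.
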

\begin{proof}
	The inequality is easier to prove than to state as it is an immediate consequence of Lemmata~\ref{lem:thetaLp},~\ref{lem:qLp} and~\ref{lem:wcLp'}. We omit the details.
\end{proof}

\section{Proof of the main proposition}
\label{sec:proof}

Given the estimates proven in Sections~\ref{sec:perturbation} and~\ref{sec:error} we are now able to prove Proposition~\ref{prop:main}. Let $ p\in(1,\infty) $ and $ \tilde{p}\in[1,\infty) $ so that \eqref{eq:p-cond} holds. Let $ \delta,\eta>0 $ and let
	\[ (\rho_0,u_0,R_0):[0,T]\times\Td\to\R\times\R^d\times\R^d \]
	be a smooth solution of the incompressible continuity-defect equation \eqref{eq:cont-defect}.

	\subsection{Choice of parameters}
	Recall that $ M $ was defined in \eqref{eq:def-M}. Let $ \epsilon $ be as in \eqref{eq:def-epsilon} and note that $ \epsilon>0 $ by \eqref{eq:p-cond}. Recall that $ a=d/p>0 $ and $ b=d/p'>0 $. For some large positive integer $ \lambda $ to be defined later:
	\begin{enumerate}
	\item Set $ \mu\coloneqq\lambda^\alpha $ for some $ \alpha(\epsilon)>2 \epsilon^{-1} > \epsilon^{-1} $.
	\item Set $ \nu\coloneqq\lambda^\gamma $ for a \emph{natural} number $ \gamma(\alpha,\epsilon) $ chosen such that
	\[ \alpha+1 < \gamma < \alpha (1+\epsilon) \]
	which is possible by the choice of $ \alpha $. In this way, $\nu$ is a multiple of $\lambda$ and the Mikado functions defined in Section \ref{subsec:mikado} are $\lambda$-periodic. 
	\item Choose $ \beta(b,\alpha,\gamma) $ such that
	\[ b\alpha<\beta<b\alpha+\gamma-(\alpha+1) \]
	which is possible by the first condition on $ \gamma $, and set $ \omega\coloneqq\lambda^\beta $.
	\item Finally, choose an integer $ N(\alpha,\gamma) $ which is large enough so that
	\[ \frac{N}{N-1} < \frac{\gamma}{1+\alpha} \]
	which is also possible by the first condition on $ \gamma $.
	\end{enumerate}
	Let us summarize the conditions imposed by our choice of the parameters $ \alpha,\beta,\gamma $ and $ N $:
	\begin{subequations}
		\begin{align}
		1&<\alpha\epsilon \label{eq:alpha-large}\\
		\alpha+1&<\gamma \label{eq:gamma-large}\\
		\gamma&<\alpha(1+\epsilon) \label{eq:gamma-small}\\
		b\alpha&<\beta \label{eq:beta-large}\\
		\beta+1+\alpha&<b\alpha+\gamma \label{eq:beta-small} \\
		N(1+\alpha) &< (N-1)\gamma. \label{eq:N-large}
		\end{align}
		\label{eq:parameters}
	\end{subequations}
	
	\subsection{Definition of the new solution} Let $ (\rho_1,u_1) $ be as defined in Section~\ref{sec:perturbation} and $ R_1 $ as in Section~\ref{sec:error}. Then $ (\rho_1,u_1,R_1) $ is a solution of \eqref{eq:cont-defect} as stated in the construction of $ R_1 $. Clearly the solution is smooth in time and space (ensured by the cut-offs $ \chi_j $) and it is equal to $ (\rho_0,u_0,R_0)(t) $ if $ R_0(t)\equiv0 $ holds, as the construction is completely local in time apart from the definition of $ R^{\mathrm{lin}} $ and $ R^{\mathrm{time},1} $, which contain the time derivative of $ R_0 $. However, by the definition of the cut-off functions $ \chi_j $ it is clear that
	\[ R_0(t)\equiv0 \implies \partial_t a_j = \partial_t \left(\chi_j(t,\cdot)|R_0^j(t,\cdot)|^{1/p}\right) \equiv 0 \]
	and and analog for $ \partial_t (a_j b_j) $, so also $ R^{\mathrm{lin}}(t), R^{\mathrm{time},1}(t)\equiv 0 $ holds.
	
	We need to show \eqref{eq:rho1-rho0_Lp}--\eqref{eq:R1}, which is equivalent to
	\begin{subequations}
		\begin{align}
		\|\vartheta(t) +q(t) +\vartheta_c(t) \|_{L^p} &\le M\eta \|R_0(t)\|_{L^1}^{1/p}
		\label{eq:densityLp}\\
		\|w(t)+w_c(t)\|_{L^{p'}} &\le \frac{M}{\eta} \|R_0(t)\|_{L^1}^{1/p'}
		\label{eq:velocityLp}\\
		\|w(t)+w_c(t)\|_{W^{1,\tilde{p}}} &\le \delta
		\label{eq:velocityW1p}\\
		\left\| \left(
		 R^{\mathrm{time}, 1} +  R^{\mathrm{quadr}} + \div R^{\chi}  + R^{\mathrm{time}, 2}+  R^{\mathrm{lin}} + R^q + R^{\mathrm{corr}}
		\right)(t) \right\|_{L^1} &\le \delta.
		\label{eq:defectL1}
		\end{align}
	\end{subequations}
	
	\begin{rem}
		In all these definitions the oscillation parameter $ \lambda\in\N $ is still to be fixed. It will be chosen sufficiently large in the following estimates. Note that this is possible as there is no upper bound on $ \lambda $ here. %For the same reason we can assume that $ \lambda^{\gamma-1}\in\N $ without loss of generality, as we assumed $ \gamma\in\mathbb{Q} $. Therefore $ \nu $ is a multiple of $ \lambda $ and the Mikado functions are $ (1/\lambda) $-periodic as imposed before.
	\end{rem}

	\subsection{Estimates on the perturbations}
Set
\begin{equation*}
A:= \big\{ t \in [0,T] \ : \ \|R_0(t)\|_{L^\infty} < \delta/4d \big\}, \qquad B: = [0,T] \setminus A. 
\end{equation*}
Since $R_0$ is a smooth function, $A$ is open in $[0,T]$ and thus $B$ is compact. It must then hold
\begin{equation*}
\inf_{t \in B} \|R_0(t)\|_{L^1} = \min_{t \in B} \|R_0(t)\|_{L^1} > 0.
\end{equation*}
If $t \in A$, then $\chi_j(t) \equiv 0$ for every $j$ and thus, by definition, $\vartheta(t) = q(t) = \vartheta_c(t) = w(t) = w_c(t) = 0$. Hence, \eqref{eq:densityLp} trivially holds. 
%		First observe that since the domain is compact and $ R_0 $ is smooth there is a positive number $ \sigma = \sigma(\|R_0\|_{C^1})$ such that
%	\[ \|R_0(t)\|_{L^1} < \sigma \implies \|R_0(t)\|_{L^\infty}<\frac{\delta}{4d} \implies \chi_j(t,\cdot)\equiv 0 \]
%	and consequently the perturbations are all equal to zero, i.e.~\eqref{eq:densityLp} and \eqref{eq:velocityLp} are trivially satisfied.
	If $t \in B$,  Lemmata~\ref{lem:thetaLp}, \ref{lem:qLp} and~\ref{lem:thetaC-qC} provide the desired bound on the density perturbation:
	\begin{align*}
		\|\vartheta(t)+q(t)+\vartheta_c(t)+q_c(t)\|_{L^p} &\le \|\vartheta(t)\|_{L^p} + \|q(t)\|_{L^p} + |\vartheta_c(t)| + |q_c(t)| \\
		&\le \frac{M\eta}{2} \|R_0(t)\|_{L^1}^{1/p} + C\left(\frac{1}{\lambda^{1/p}}+\frac{\mu^b}{\omega}+\frac{1}{\mu^b} + \frac{1}{\omega}\right) \\
		&= \frac{M\eta}{2} \|R_0(t)\|_{L^1}^{1/p} + C\left(\lambda^{-1/p} +\lambda^{b\alpha-\beta}+\lambda^{-b\alpha}+\lambda^{-\beta}\right).
	\end{align*}
	Because of \eqref{eq:beta-large} and the facts $ p<\infty $ and $ b>0 $ the second summand can be made arbitrarily small by choosing $ \lambda $ sufficiently large. More precisely, we can choose $\lambda$ so that
	\[ C\left(\lambda^{-1/p} +\lambda^{b\alpha-\beta}+\lambda^{-b\alpha}+\lambda^{-\beta}\right) < \frac{M\eta}{2} \min_{t \in B} \|R_0(t)\|_{L^1}^{1/p}, \]
which, in particular, proves \eqref{eq:densityLp}. Notice that, taking the minimum of the $\|R_0(t)\|_{L^1}$, we ensure that $\lambda$ can be chosen \emph{independent of $t$}. 
	
	For the $ L^{p'} $-bound on the velocity perturbation we need Lemmata~\ref{lem:wLp'} and~\ref{lem:wcLp'}.
\begin{align*}
	\|w+w_c\|_{L^{p'}} &\le \frac{M}{2\eta} \|R_0\|_{L^1}^{1/p'} + C \left( \frac{1}{\lambda^{1/p'}}+\sum_{k=1}^N \left(\frac{\mu\lambda}{\nu}\right)^k +\frac{(\mu\lambda)^{N+1}}{\nu^N} \right) \\
	&= \frac{M}{2\eta} \|R_0\|_{L^1}^{1/p'} + C \left( \lambda^{-1/p'}+\sum_{k=1}^N \left(\lambda^{1+\alpha-\gamma}\right)^k + \lambda^{(N+1)(1+\alpha)-N\gamma} \right).
\end{align*}
	Because of \eqref{eq:gamma-large} we have $ \lambda^{1+\alpha-\gamma}<1 $, so the sum inside the parentheses is bounded by $ N\lambda^{1+\alpha-\gamma} $. Furthermore
	\[ (N+1)(1+\alpha)-N\gamma < N(1+\alpha)-(N-1)\gamma < 0 \]
	holds by \eqref{eq:N-large}. Observe also that $ p'<\infty $, so all the exponents of $ \lambda $ in the parentheses are negative so the term can be made arbitrarily small by choosing $ \lambda $ sufficiently large, which proves \eqref{eq:velocityLp}.
	
	For \eqref{eq:velocityW1p} we apply Lemmata~\ref{lem:wW1p} and~\ref{lem:wcW1p} and obtain
	\begin{align*}
		\|w+w_c\|_{W^{1,\tilde{p}}} &\le C \left( \frac{\lambda\mu+\nu}{\mu^{1+\epsilon}} \right) \left( 1 + \sum_{k=1}^N \left(\frac{\lambda\mu}{\nu}\right)^k + \frac{(\lambda\mu)^{N+1}}{\nu^N} \right) \\
		&= C\left( \lambda^{1-\alpha\epsilon} + \lambda^{\gamma-\alpha(1-\epsilon)} \right) \left( \sum_{k=0}^N \lambda^{k(1+\alpha-\gamma)} +\lambda^{(N+1)(1+\alpha)-N\gamma} \right).
	\end{align*}
	Again because of \eqref{eq:gamma-large} and \eqref{eq:N-large} each summand inside the second parentheses is bounded by 1, so the inequality boils down to
	\[ \|w+w_c\|_{W^{1,\tilde{p}}} \le C(N+2)\left(\lambda^{1-\alpha\epsilon} + \lambda^{\gamma-\alpha(1-\epsilon)} \right). \]
	Both exponents of $\lambda$ in this expression are negative: The first one is by condition \eqref{eq:alpha-large} and the second by \eqref{eq:gamma-small}. Therefore, if $\lambda$ is large enough, \eqref{eq:velocityW1p} holds.
	
	\subsection{Estimates on the new error}
	By Lemma~\ref{lem:Rchi} the smoothness corrector term $ R^\chi $ is bounded in $ L^1 $ by $ \frac{\delta}{2} $ so in order to prove \eqref{eq:defectL1} we need to show that the sum of all other components of the defect field $ R_1 $ is smaller than $ \frac{\delta}{2} $ in $ L^1 $. Most of the terms are bounded analog to the density and velocity perturbations, by Lemmata~\ref{lem:Rquadr1}, \ref{lem:Rlin1} and~\ref{lem:Rqq1}:
	\begin{align*}
	\left\|R^{\mathrm{quadr}}\right\|_{L^1} &\le C \left(\lambda^{1+\alpha-\gamma}+\lambda^{-1}\right), \\
	\left\|R^{\mathrm{lin}}\right\|_{L^1} &\le C \left(\lambda^{-a\alpha} + \lambda^{-b\alpha}\right), \\
	\left\|R^q\right\|_{L^1} &\le C \lambda^{b\alpha-\beta}, \\
	\left\|R^{\mathrm{time},1}\right\|_{L^1} &\le C \lambda^{-\beta}.
	\end{align*}
	These terms are small for large $ \lambda $ because of \eqref{eq:gamma-large} (first line), as $ a,b>0 $ (second line) and by \eqref{eq:beta-large} (third and fourth line).
	
	The two remaining terms require more attention. Lemma~\ref{lem:Rcorr} provides the following bound on $ R^{\textrm{corr}} $:
	\[ \left\|R^{\textrm{corr}}\right\|_{L^1} \le C\left(1+\lambda^{-1/p}+\lambda^{b\alpha-\beta}\right) \left(\sum_{k=1}^N \lambda^{k(1+\alpha-\gamma)} + \lambda^{(N+1)(1+\alpha)-N\gamma}\right). \]
	By \eqref{eq:beta-large} the term in the first parentheses is bounded by 3, the second one is small for large $ \lambda $ because of \eqref{eq:gamma-large} and \eqref{eq:N-large} by the same argument as above in the estimate of the velocity perturbation. The last remaining term is $ R^{\mathrm{time},2} $, which is taken care of in Lemma~\ref{lem:Rlin2}:
	\begin{align*}
		\left\|R^{\mathrm{time},2}\right\|_{L^1} &\le C \lambda^{\beta-b\alpha} \left(\sum_{k=1}^N \lambda^{k(1+\alpha-\gamma)} + \lambda^{(N+1)(1+\alpha)-N\gamma}\right) \\
		&= C \lambda^{\beta+1+\alpha-(b\alpha+\gamma)} \left(\sum_{k=0}^{N-1} \lambda^{k(1+\alpha-\gamma)} + \lambda^{N(1+\alpha)-(N-1)\gamma}\right).
	\end{align*}
	Now \eqref{eq:gamma-large} and \eqref{eq:N-large} implies that the parentheses is bounded by $ N +1 $. Moreover the exponent $ \beta+1+\alpha-(b\alpha+\gamma) $ is negative because of condition \eqref{eq:beta-small}, so the term is arbitrarily small if $ \lambda $ is chosen sufficiently large. This concludes the proof of \eqref{eq:defectL1} and thus the proof of the proposition.

\section{Sketch of the proof of Proposition \ref{prop:main} for $p=1$ \\ and of Theorems~\ref{thm:diffusion} and~\ref{thm:higherReg}}
\label{sec:appendix}

\subsection{The case of continuous vector fields}
\label{subsec:continuous-vf}

The proof of Proposition~\ref{prop:main} at some points requires an integrability of the density perturbation $ \vartheta $ which is strictly better than $ L^1 $, most crucially in Lemma~\ref{lem:Rlin1}: Smallness for the term $ \|\vartheta u_0\|_{L^1} $ is impossible in the construction of the perturbation as presented in the previous sections.

In \cite{modena-szekelyhidi18} the same problem was solved by letting the Mikados ``deform with the flow'' so that the transport term in the linear part of $ R_1 $,
\[ \div R^{\mathrm{transport}} = \left(\partial_t+ u_0\cdot\nabla\right) \left(\vartheta-\fint_{\Td}\vartheta\right) \]
is sufficiently small because of a cancellation in the Mikado function.

More precisely, since $ u_0 $ is smooth, there exists the ``inverse flow map'', a smooth function $ \Phi:[0,T]\times\Td\to\Td $ which solves
\[ \partial_t \Phi + u_0 \cdot \nabla \Phi =0 \ ,\ \Phi(t_0,x) =x .\]
Moreover $ \Phi(t,\cdot):\Td\to\Td $ is close to the identity if $ t $ is close to $ t_0 $. In \cite{modena-szekelyhidi18} the perturbations are now defined using the pushforward of the Mikado density and flow. Ignoring corrector and cut-offs and using our notation the density perturbations locally in time has the representation
\[ \vartheta(t,x) = \eta \sum_j R_0^j(t,x) \Theta^j_{\lambda,\mu}\left( \Phi(t,x)\right). \]
It is easy to see that from this definition the transport term in the new defect field reduces to
\[ \left(\partial_t + u_0(t,x) \cdot\nabla\right) \vartheta(t,x) = \eta \sum_j \Theta_{\lambda,\mu}^j \left(\Phi(t,x)\right) \left(\partial_t + u_0(t,x)\cdot\nabla\right) R_0^j(t,x), \]
whose antidivergence is of order $1/\lambda$ in $L^1$-norm, because of the fast oscillating Mikado $\Theta^j_{\lambda, \mu}$.

In the construction presented in \Cref{sec:perturbation} it is advantageous to apply the pushforward only on the fast oscillating factor $ \psi(\nu x) $ and not on the space-time Mikado functions $ \varphi^j(t,x) $, which ensure the disjoint support where necessary. The density perturbation then takes the form
\[ \vartheta(t,x) = \eta\sum_j R_0^j(t,x) \varphi_\mu^j(\lambda(x-\omega te_j)) \psi^j \left(\nu \Phi(t,x)\right). \]
On the one hand the transport term also contains derivatives of $ (\varphi_\mu)_\lambda $, which excludes the possibility of a cheap $ L^1 $-estimate. However, the term is almost identical to $ \partial_t \vartheta $, so it is possible to estimate its antidivergence analog to Lemma~\ref{lem:Rlin2}. On the other hand, since the definition of the space-time Mikado functions $ \varphi^j(t,x) $ remains untouched, we still have disjoint support of Mikados in different directions, so there will not be any nontrivial interactions (``Third issue'' in Section~2 of \cite{modena-szekelyhidi18}) which need to be controlled.

All the other estimates in \Cref{sec:perturbation,sec:error,sec:proof} remain valid under this redefinition, so Proposition~\ref{prop:main} can be proved with $ p=1 $. For the technical details see \cite{modena-szekelyhidi18}.

\subsection{Handling the diffusion term}

In order to prove Theorem~\ref{thm:diffusion} we only need to add minor adjustments and one more estimate to the proof presented in \Cref{sec:error,sec:perturbation,sec:proof,sec:tools}.
The cheapest way to prove that $ (\rho_n,u_n,R_n) $ converges to a solution of \eqref{eq:transport-diffusion} is by showing that $ \nabla\rho_n $ converges in $ L^1 $. This way we can keep the construction of the perturbations untouched and just add $ \nabla(\rho_n-\rho_{n-1}) $ to the new defect field $ R_n $. Then clearly
\[ \partial_t \rho_1 +\div(\rho_1 u_1) - \Delta\rho_1 = -\div(R_1)-\Delta\rho_1 = -\div\left(R_1+\nabla\rho_1\right) \]
holds and it suffices to show that $ \nabla(\rho_1-\rho_0) $ is small in $ L^1 $. This estimate is straightforward: with the notation introduced in Section~\ref{sec:perturbation} we obtain
\begin{alignat*}{2}
	\|\nabla\vartheta\|_{L^1} &\le C \frac{1+\lambda\mu+\nu}{\mu^b} &&= C\frac{1+\lambda^{1+\alpha}+\lambda^\gamma}{\lambda^{b\alpha}},\\
	\|\nabla q\|_{L^1} &\le C \frac{1+\lambda\mu+\nu}{\omega} &&= C\frac{1+\lambda^{1+\alpha}+\lambda^\gamma}{\lambda^\beta}.
\end{alignat*}
(and trivially $ \nabla \vartheta_c=0 $.) We need to redefine $ \epsilon $ so that
\[ 0<\epsilon < \min\left\{ \frac{d}{\tilde{p}} - \frac{d}{p'} -1, \frac{d}{p'}-1 \right\}, \]
which is always possible by the additional condition \eqref{eq:p-cond-diffusion} in the statement of the theorem. Choose the parameters $ \alpha,\beta,\gamma $ exactly as before and observe that
\[ b>1+\epsilon \implies b\alpha>\alpha(1+\epsilon)>\gamma>1+\alpha \]
by conditions \eqref{eq:gamma-small} and \eqref{eq:alpha-large} and therefore $ \|\nabla\vartheta\|_{L^1} $ is small for large $ \lambda $. Similarly, $ \|\nabla q\|_{L^1} $ is also small as by \eqref{eq:beta-large} in particular $ \beta>\gamma,1+\alpha $.
This concludes the proof of an analog of Proposition \ref{prop:main} in the viscous case and thus Theorem~\ref{thm:diffusion}.

\subsection{Solutions of higher regularity}
Also for Theorem~\ref{thm:higherReg} the already existing proof requires only some adjustments and more estimates. For the sake of completeness and in order to motivate the extra conditions in the statement we state the analog of the main proposition.
\begin{prop}
	\label{prop:higherReg}
	There is constant $M>0$ such that the following holds. Let $ p,\tilde{p} \in [1,\infty) $ and $ m,\tilde{m}\in\N $ such that \eqref{eq:p-cond-higher} holds. There is $ s\in(1,\infty) $ such that for any $ \delta>0 $ and any smooth solution $ (\rho_0,u_0,R_0) $ of
	\begin{align*}
	\partial_t \rho +\div\left(\rho u\right) + L_k\rho &= -\div R, \\
	\div u &= 0,
	\end{align*}
	there is another smooth solution $ (\rho_1,u_1,R_1) $ which fulfils for any $ t\in[0,T] $
	\begin{subequations}
		\begin{align}
		\|\rho_1(t)-\rho_0(t)\|_{L^s} &\le M \|R_0(t)\|^{1/s} \\
		\|u_1(t)-u_0(t)\|_{L^{s'}} &\le M \|R_0(t)\|^{1/s'} \\
		\|\rho_1(t)-\rho_0(t)\|_{W^{m,p}} &\le\delta \\
		\|u_1(t)-u_0(t)\|_{W^{\tilde{m},\tilde{p}}} &\le\delta \\
		\|\rho_1(t)-\rho_0(t)\|_{W^{k-1,1}} &\le \delta
		\label{eq:Lk-rho-L1}\\
		\|R_1(t)\|_{L^1} &\le \delta \\
		R_0(t)\equiv 0 \implies R_1(t) &\equiv 0.
		\end{align}
	\end{subequations}
\end{prop}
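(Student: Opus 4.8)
The plan is to adapt the proof of Proposition~\ref{prop:main} from Sections~\ref{sec:perturbation}--\ref{sec:proof} with only two modifications. First, the concentration exponents of the space--time Mikado functions are retuned to an auxiliary exponent $s\in(1,\infty)$. Writing $\tfrac1s+\tfrac1{s'}=1$, the first inequality in \eqref{eq:p-cond-higher} says precisely that the two \emph{open} conditions $\tfrac1p-\tfrac1s>\tfrac md$ and $\tfrac1{\tilde p}-\tfrac1{s'}>\tfrac{\tilde m}d$ admit a common solution, and the second inequality in \eqref{eq:p-cond-higher} says precisely that such an $s$ can in addition be taken with $\tfrac1{s'}>\tfrac{k-1}d$; we fix such an $s$ and set
\[ \epsilon_\rho:=\tfrac dp-\tfrac ds-m>0,\qquad \epsilon_u:=\tfrac d{\tilde p}-\tfrac d{s'}-\tilde m>0,\qquad \epsilon_k:=\tfrac d{s'}-(k-1)>0, \]
together with $a:=d/s$, $b:=d/s'$ (so $a+b=d$, as in \eqref{eq:def-a-b} but with $p,p'$ replaced by $s,s'$). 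Second, $L_k$ is never incorporated into the building blocks: since $L_k$ has order $k$ and $\rho_1-\rho_0$ has zero mean, $L_k(\rho_1-\rho_0)$ is absorbed into the defect as $\div R^{\mathrm{diff}}$ --- write each $\partial^\alpha(\rho_1-\rho_0)$ with $|\alpha|\ge1$ as a divergence of a term of order $|\alpha|-1$, and the zeroth order part, if present, as $\div\,\D^{-1}(\cdot)$ --- so that, by Lemma~\ref{lem:antiderivative-end}, $\|R^{\mathrm{diff}}(t)\|_{L^1}\le C\|\rho_1(t)-\rho_0(t)\|_{W^{k-1,1}}$. This is exactly why the proposition demands \eqref{eq:Lk-rho-L1}.

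With $a,b$ as above I would define $\varphi_\mu,\tilde\varphi_\mu,\varphi_\mu^j,\tilde\varphi_\mu^j$, the space--time Mikados $\Theta^j,W^j,Q^j$ of Section~\ref{subsec:mikado}, the perturbations $\vartheta,w,q$ and the correctors $\vartheta_c,q_c,w_c$ of Section~\ref{subsec:construct} \emph{verbatim}, except that $\vartheta$ carries the weight $|R_0^j|^{1/s}$ and $w$ the weight $|R_0^j|^{1/s'}$ (so $q$ still carries $a_jb_j=\chi_j^2R_0^j$, and Proposition~\ref{prop:mikados}, in particular the cancellation \eqref{eq:Mikado-cancellation}, is unaffected: its proof uses only $a+b=d$ and the normalisations $\int_{\R^d}\varphi^2=1$, $\fint_{\mathbb{T}^{d-1}}(\psi^j)^2=1$). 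The defect is decomposed exactly as in \eqref{eq:divR1} into $R^\chi,R^{\mathrm{time},1},R^{\mathrm{quadr}},R^{\mathrm{lin}},R^{\mathrm{time},2},R^q,R^{\mathrm{corr}}$, and $(\rho_1,u_1,R_1+R^{\mathrm{diff}})$ then solves the defect version of \eqref{eq:high-order-diffusion}.

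The estimates come in three groups. (i) With the new $a,b$ the scaling \eqref{eq:phi-j-mu-scaling} reproduces Proposition~\ref{prop:mikados} and Lemmata~\ref{lem:thetaLp}--\ref{lem:wcLp'} word for word, giving $\|\vartheta(t)+\vartheta_c(t)+q(t)+q_c(t)\|_{L^s}\le\tfrac M2\|R_0(t)\|_{L^1}^{1/s}+(\text{small})$ and $\|w(t)+w_c(t)\|_{L^{s'}}\le\tfrac M2\|R_0(t)\|_{L^1}^{1/s'}+(\text{small})$, hence the first two claimed bounds once $\lambda$ is large. (ii) For the strong norms one proves, again from \eqref{eq:phi-j-mu-scaling}, the analogues of \eqref{eq:Mikado-bound-WinW}: $\|\Theta^j(t)\|_{W^{m,p}}\le C\,\mu^{-\epsilon_\rho-m}(\lambda\mu+\nu)^m$, $\|W^j(t)\|_{W^{\tilde m,\tilde p}}\le C\,\mu^{-\epsilon_u-\tilde m}(\lambda\mu+\nu)^{\tilde m}$, and (using $\|\varphi_\mu^j\tilde\varphi_\mu^j\|_{L^1}=1$, $\|\varphi_\mu^j\|_{L^1}=C\mu^{-b}$) $\|\Theta^j(t)\|_{W^{k-1,1}}\le C\,\mu^{-b}(\lambda\mu+\nu)^{k-1}$ and $\|Q^j(t)\|_{W^{k-1,1}}\le C\,\omega^{-1}(\lambda\mu+\nu)^{k-1}$; the corrector $w_c$ is controlled in $W^{\tilde m,\tilde p}$ by iterating the Leibniz rule for $\adiv_N$ (Lemma~\ref{lem:adiv-properties}(ii)) and Lemma~\ref{lem:fj} for general $h$, exactly as in Lemma~\ref{lem:wcW1p}. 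Weighting by $a_j,b_j$ and summing over $j$ gives the $W^{m,p}$, $W^{\tilde m,\tilde p}$ and $W^{k-1,1}$ bounds on $\rho_1-\rho_0$ and $u_1-u_0$. (iii) Every estimate in Section~\ref{sec:error} on $R^\chi,\dots,R^{\mathrm{corr}}$ used only $L^1$ and mixed $L^r$ norms together with \eqref{eq:phi-j-mu-scaling}, so it is reused verbatim with the new $a,b$; and $\|R^{\mathrm{diff}}(t)\|_{L^1}$ is dominated by the $W^{k-1,1}$ bound from (ii).

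It remains to fix the parameters as in Section~\ref{sec:proof}: $\mu=\lambda^\alpha$, $\nu=\lambda^\gamma$ (a natural multiple of $\lambda$), $\omega=\lambda^\beta$, $N\in\N$, now with $\alpha$ so large that $\alpha\min\{\epsilon_\rho,\epsilon_u,\epsilon_k\}>2(m+\tilde m+k)$; with $\gamma\in\N$ satisfying $\alpha+1<\gamma<\alpha\bigl(1+\tfrac{\min\{\epsilon_\rho,\epsilon_u,\epsilon_k\}}{2(m+\tilde m+k)}\bigr)$, which makes $\nu^m\ll\mu^{\epsilon_\rho+m}$, $\nu^{\tilde m}\ll\mu^{\epsilon_u+\tilde m}$ and $\nu^{k-1}\ll\mu^{b}$ hold alongside the old conditions \eqref{eq:gamma-large}--\eqref{eq:gamma-small}; with $\beta$ as in Section~\ref{sec:proof}, $b\alpha<\beta<b\alpha+\gamma-(\alpha+1)$ (note $b\alpha>(k-1)\gamma$ by the choice of $\gamma$, so $\|q(t)\|_{W^{k-1,1}}\le C\omega^{-1}(\lambda\mu+\nu)^{k-1}\to0$ automatically); and $N$ as in \eqref{eq:N-large}. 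For $\lambda$ sufficiently large every error exponent of $\lambda$ above is negative, so all the required bounds hold; the clause $R_0(t)\equiv0\Rightarrow R_1(t)\equiv0$ is verified exactly as in Section~\ref{sec:proof}. I expect the only genuinely new --- hence delicate --- part to be this bookkeeping: verifying that the two inequalities in \eqref{eq:p-cond-higher} are precisely what makes the ranges for $s$, $\alpha$, $\gamma$, $\beta$ nonempty (the first giving $\epsilon_\rho,\epsilon_u>0$, the second giving $\epsilon_k>0$ jointly with $\epsilon_u>0$), while everything else is a mechanical re-run of Sections~\ref{sec:tools}--\ref{sec:proof} with $(p,p')$ replaced by $(s,s')$. (If statements (iv)/(iv$'$) of Theorem~\ref{thm:higherReg} are wanted as well, one reinstates the free scalar $\eta>0$ in the definitions of $\vartheta$ and $w$, exactly as in Proposition~\ref{prop:main}.)
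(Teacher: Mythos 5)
Your proposal follows the paper's own (rather terse) sketch quite closely, and the core decisions coincide: retune the concentration exponents $a=d/s$, $b=d/s'$ for an auxiliary $s$ whose existence is governed by \eqref{eq:p-cond-higher}, keep the building blocks and the decomposition of $R_1$ from Sections~\ref{sec:perturbation}--\ref{sec:error} unchanged, absorb $L_k(\rho_1-\rho_0)$ into the defect via Lemma~\ref{lem:antiderivative-end}, and add the Sobolev scaling estimates for the Mikados. Your $\epsilon_\rho,\epsilon_u,\epsilon_k$ are, up to the factors $m,\tilde m,k-1$, exactly the paper's $\epsilon_1,\epsilon_2,\epsilon_3$ (with the paper's $\min$ inside $\epsilon_2$ automatically collapsing to $\tfrac1{\tilde p}-\tfrac1{s'}$ once $\epsilon_k>0$), so the parameter bookkeeping is equivalent.

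One omission worth flagging: you list $\|\Theta^j\|_{W^{m,p}}$ but not $\|Q^j\|_{W^{m,p}}$, even though $\rho_1-\rho_0=\vartheta+\vartheta_c+q+q_c$ requires the latter for the $W^{m,p}$ bound. The relevant scaling, as the paper records, is $\|Q^j(t)\|_{W^{m,p}}\lesssim \omega^{-1}(\lambda\mu+\nu)^m\mu^{d/p'}$ (here $d/p'$, not $d/s'$, because $\varphi_\mu\tilde\varphi_\mu$ carries the combined exponent $a+b=d$). This imposes the extra lower constraint $\omega \gg \nu^m\mu^{d/p'}$ on $\beta$, in addition to $\beta>b\alpha$; one must check that this is compatible with the upper constraint $\beta<b\alpha+\gamma-(\alpha+1)$ coming from $R^{\mathrm{time},2}$. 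Using $b-d/p'=\epsilon_\rho+m$ and your window $\gamma<\alpha\bigl(1+\tfrac{\epsilon}{2(m+\tilde m+k)}\bigr)$, this reduces to $(m-1)\alpha\theta<\alpha\epsilon_\rho-1$ with $\theta=\tfrac{\epsilon}{2(m+\tilde m+k)}$, which does hold under your hypothesis $\alpha\min\{\epsilon_\rho,\epsilon_u,\epsilon_k\}>2(m+\tilde m+k)$; so your parameter choice happens to accommodate this constraint, but you should state the $\|Q^j\|_{W^{m,p}}$ estimate and verify the resulting inequality rather than leave it implicit.
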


\begin{proof}[Proof of Theorem~\ref{thm:higherReg}]
	For the differential operator of order $ k $ $ L_k $ there is an operator $ \tilde{L}_k $ such that
	\[ L_k f = \div\tilde{L}_k f \text{ for any smooth } f. \]
	Observe that $ \|\tilde{L}f\|_{L^r} \lesssim \|f\|_{W^{k-1,r}} $, so \eqref{eq:Lk-rho-L1} in particular implies
	\[ \left\|\tilde{L}_k (\rho_1-\rho_0) \right\|_{L^1} \le \delta. \]
	This guarantees that $R_n(t) \to 0$ in $L^1$, uniformly in time. 
	Completely analog to the proof of Theorem~\ref{thm:weak} we construct a sequence $ (\rho_n,u_n,R_n) $ of smooth solutions satisfying the bounds
	\begin{align*}
		\|\rho_{n+1}(t)-\rho_n(t)\|_{L^s} \le M \|R_n(t)\|^{1/s} &\le M\delta_{n-1}^{1/s} \\
		\|u_{n+1}(t)-u_n(t)\|_{L^{s'}} \le M \|R_n(t)\|^{1/s'} &\le M\delta_{n-1}^{1/s'} \\
		\|\rho_{n+1}(t)-\rho_n(t)\|_{W^{m,p}} &\le\delta_n \\
		\|u_{n+1}(t)-u_n(t)\|_{W^{\tilde{m},\tilde{p}}} &\le\delta_n \\
		\|\rho_{n+1}(t)-\rho_n(t)\|_{W^{k-1,1}} &\le \delta_n \\
		\|R_{n+1}(t)\|_{L^1} &\le \delta_n \\
		R_n(t)\equiv0 \implies R_{n+1}(t) &\equiv 0
	\end{align*}
	for $ (\rho_0,u_0)=(\bar{\rho},\bar{u}) $ and a sequence of positive numbers $ (\delta_n)_{n\in\N} $ chosen such that
	\[ \sum_{n\in\N} \delta_n^{1/s}<\infty  \ ,\ \sum_{n\in\N} \delta_n^{1/s'}<\infty \ ,\ \sum_{n\in\N} \delta_n<\infty, \]
and, in addition,
	\[ M \sum_{n\in\N} \delta_n^{1/s}<\e \]
	if we want to show (iv) or 
	\[ M \sum_{n\in\N} \delta_n^{1/s'}<\e \]	
	if we want to show (iv'). 	Then the limit 
	\[ \rho_n \xrightarrow{n\to\infty} \rho \text{ in }C\left([0,T],W^{m,p}(\Td)\right),\ u_n \xrightarrow{n\to\infty} u \text{ in }C\left([0,T],W^{\tilde{m},\tilde{p}}(\Td)\right) \]
	fulfils statements (i)--(iv) of the theorem.
\end{proof}

We only give a sketch of the proof of Proposition~\ref{prop:higherReg}, as it is mostly analog to the proof of Proposition~\ref{prop:main}. The only important difference is that in general $ u_1\in L^{p'} $ does not hold, which is needed for the $ L^1 $-convergence of the product $ \rho_n u_n $ and we want the density perturbation to be small in the Sobolev space $ W^{m,p} $, which was not necessary before. We address both issues by defining the Mikados in a slightly different way: The ``concentration scaling'' of Mikado density $ \Theta_\lambda $ and Mikado field $ W_\lambda $ is now given by
\[ \varphi_\mu(x) = \mu^a\varphi(\mu x),\ \tilde{\varphi}_\mu(x) = \mu^b \varphi(\mu x) \text{ where } a=\frac{d}{s},\ b=\frac{d}{s'} \]
for $ s\in(1,\infty) $ chosen such that
\[ \frac{1}{p}-\frac{m}{d} > \frac{1}{s} = 1-\frac{1}{s'} > 1+\frac{\tilde{m}}{d}-\frac{1}{\tilde{p}} \text{ and }
\frac{1}{s'} > \frac{k-1}{d}. \]
Note that such an $ s $ must exist because of \eqref{eq:p-cond-higher}.

With a suitable $ M $ and positive numbers $ \epsilon_1,\epsilon_2,\epsilon_3 $ defined as
\begin{align*}
\epsilon_1 & \coloneqq \frac{d}{m}\left(\frac{1}{p}-\frac{1}{s}\right)-1 \\
\epsilon_2 & \coloneqq \frac{d}{\tilde{m}} \min \left\{ \frac{1}{\tilde{p}}-\frac{1}{s'} , \frac{1}{\tilde{p}}-\frac{k-1}{d} \right\}-1 \\
\epsilon_3 & \coloneqq \frac{d}{s'(k-1)} -1
\end{align*}
the scaling of the Mikados implies
\begin{align*}
\left\|\Theta_{\lambda,\mu,\omega,\nu}\right\|_{L^s}, \left\|W_{\lambda,\mu,\omega,\nu}\right\|_{L^{s'}} &\le M \\
\left\|Q_{\lambda,\mu,\omega,\nu}\right\|_{L^s} &\lesssim \frac{\mu^b}{\omega} \\
\left\|\Theta_{\lambda,\mu,\omega,\nu}\right\|_{W^{m,p}} &\lesssim \left(\frac{\lambda\mu+\nu}{\mu^{(1+\epsilon_1)}}\right)^m \\
\left\|Q_{\lambda,\mu,\omega,\nu}\right\|_{W^{m,p}} &\lesssim \frac{(\lambda\mu+\nu)^m\mu^{d/p'}}{\omega} \\
\left\|W_{\lambda,\mu,\omega,\nu}\right\|_{W^{\tilde{m},\tilde{p}}} &\lesssim \left(\frac{\lambda\mu+\nu}{\mu^{(1+\epsilon_2)}}\right)^{\tilde{m}} \\
\left\|\Theta_{\lambda,\mu,\omega,\nu}\right\|_{W^{k-1,1}} &\lesssim \left(\frac{\lambda\mu+\nu}{\mu^{1+\epsilon_3}}\right)^{k-1} \\
\left\| Q_{\lambda,\mu,\omega,\nu}\right\|_{W^{k-1,1}} &\lesssim \frac{(\lambda\mu+\nu)^{k-1}}{\omega}.
\end{align*}
Choosing the parameters $ \mu=\lambda^\alpha $, $ \omega=\mu^\beta $ and $ \nu=\lambda^\gamma $ dependent of $ b $ and $ \epsilon \coloneqq \min \{\epsilon_1,\epsilon_2,\epsilon_3\}$ according to \eqref{eq:parameters} the proof of all necessary estimates is analog to those in Sections~\ref{sec:perturbation},~\ref{sec:error} and~\ref{sec:proof}.

\bibliographystyle{acm}
\bibliography{transport}

\end{document}